\begin{document}

\tikzstyle{decision} = [diamond, draw, fill=gray!20, 
    text width=4.5em, text badly centered, node distance=3cm, inner sep=0pt]
\tikzstyle{block} = [rectangle, draw, fill=gray!20, 
    text width=10em, text centered, rounded corners, minimum height=3em]
\tikzstyle{line} = [draw, -latex']
\tikzstyle{cloud} = [draw, ellipse,fill=gray!20, node distance=3cm,
    minimum height=2em]

\newtheorem{thm}{Theorem}[section]
\newtheorem{prop}[thm]{Proposition}
\newtheorem{assume}[thm]{Assumption}
\newtheorem{lemma}[thm]{Lemma}
\newtheorem{cor}[thm]{Corollary}
\newtheorem{conj}[thm]{Conjecture}
\newtheorem{claim}[thm]{Claim}
\newtheorem{qn}[thm]{Question}
\newtheorem{defn}[thm]{Definition}
\newtheorem{defth}[thm]{Definition-Theorem}
\newtheorem{obs}[thm]{Observation}
\newtheorem{rmk}[thm]{Remark}
\newtheorem{ans}[thm]{Answers}
\newtheorem{slogan}[thm]{Slogan}
\newtheorem{propn}[thm]{Proposition}
\newtheorem{propt}[thm]{Property}
\newtheorem{ex}[thm]{Example}

\newcommand{\hhat}{\widehat}
\newcommand{\C}{{\mathbb C}}
\newcommand{\B}{{\mathbb B}}
\newcommand{\Ga}{{\Gamma}}
\newcommand{\G}{{\Gamma}}
\newcommand{\s}{{\Sigma}}
\newcommand{\PSL}{{PSL_2 (\mathbb{C})}}
\newcommand{\pslc}{{PSL_2 (\mathbb{C})}}
\newcommand{\pslr}{{PSL_2 (\mathbb{R})}}
\newcommand{\Gr}{{\mathcal G}}
\newcommand{\integers}{{\mathbb Z}}
\newcommand{\natls}{{\mathbb N}}
\newcommand{\ratls}{{\mathbb Q}}
\newcommand{\reals}{{\mathbb R}}
\newcommand{\proj}{{\mathbb P}}
\newcommand{\lhp}{{\mathbb L}}
\newcommand{\tube}{{\mathbb T}}
\newcommand{\cusp}{{\mathbb P}}
\newcommand\AAA{{\mathcal A}}
\newcommand\HHH{{\mathbb H}}
\newcommand\BB{{\mathcal B}}
\newcommand\CC{{\mathcal C}}
\newcommand\DD{{\mathcal D}}
\newcommand\EE{{\mathcal E}}
\newcommand\FF{{\mathcal F}}
\newcommand\GG{{\mathcal G}}
\newcommand\HH{{\mathcal H}}
\newcommand\II{{\mathcal I}}
\newcommand\JJ{{\mathcal J}}
\newcommand\KK{{\mathcal K}}
\newcommand\LL{{\mathcal L}}
\newcommand\MM{{\mathcal M}}
\newcommand\NN{{\mathcal N}}
\newcommand\OO{{\mathcal O}}
\newcommand\PP{{\mathcal P}}
\newcommand\QQ{{\mathcal Q}}
\newcommand\RR{{\mathcal R}}
\newcommand\SSS{{\mathcal S}}
\newcommand\TT{{\mathcal T}}
\newcommand\UU{{\mathcal U}}
\newcommand\VV{{\mathcal V}}
\newcommand\WW{{\mathcal W}}
\newcommand\XX{{\mathcal X}}
\newcommand\YY{{\mathcal Y}}
\newcommand\ZZ{{\mathcal Z}}
\newcommand{\iid}{{i.i.d.\ }}
	\renewcommand{\ae}{{a.e.\ }}
\newcommand\CH{{\CC\Hyp}}
\newcommand{\Chat}{{\hat {\mathbb C}}}
\newcommand\MF{{\MM\FF}}
\newcommand\PMF{{\PP\kern-2pt\MM\FF}}
\newcommand\ML{{\MM\LL}}
\newcommand\PML{{\PP\kern-2pt\MM\LL}}
\newcommand\GL{{\GG\LL}}
\newcommand\Pol{{\mathcal P}}
\newcommand\half{{\textstyle{\frac12}}}
\newcommand\Half{{\frac12}}
\newcommand\Mod{\operatorname{Mod}}
\newcommand\Area{\operatorname{Area}}
\newcommand\ep{\epsilon}
\newcommand\Hypat{\widehat}
\newcommand\Proj{{\mathbf P}}
\newcommand\U{{\mathbf U}}
 \newcommand\Hyp{{\mathbf H}}
\newcommand\D{{\mathbf D}}
\newcommand\Z{{\mathbb Z}}
\newcommand\R{{\mathbb R}}
\newcommand\Q{{\mathbb Q}}
\newcommand\E{{\mathbb E}}
\newcommand\EXH{{ \EE (X, \HH_X )}}
\newcommand\EYH{{ \EE (Y, \HH_Y )}}
\newcommand\GXH{{ \GG (X, \HH_X )}}
\newcommand\GYH{{ \GG (Y, \HH_Y )}}
\newcommand\ATF{{ \AAA \TT \FF }}
\newcommand\PEX{{\PP\EE  (X, \HH , \GG , \LL )}}
\newcommand{\lct}{\Lambda_{CT}}
\newcommand{\lel}{\Lambda_{EL}}
\newcommand{\lgel}{\Lambda_{GEL}}
\newcommand{\lre}{\Lambda_{\mathbb{R}}}

\newcommand\supp{\operatorname{supp}}
\newcommand\til{\widetilde}
\newcommand\length{\operatorname{length}}
\newcommand\tr{\operatorname{tr}}
\newcommand\cone{\operatorname{cone}}
\newcommand\gesim{\succ}
\newcommand\lesim{\prec}
\newcommand\simle{\lesim}
\newcommand\simge{\gesim}
\newcommand{\simmult}{\asymp}
\newcommand{\simadd}{\mathrel{\overset{\text{\tiny $+$}}{\sim}}}
\newcommand{\ssm}{\setminus}
\newcommand{\diam}{\operatorname{diam}}
\newcommand{\pair}[1]{\langle #1\rangle}
\newcommand{\T}{{\mathbf T}}
\newcommand{\I}{{\mathbf I}}
\newcommand{\pG}{{\partial G}}
 \newcommand{\oxi}{{[1,\xi)}}
\newcommand{\cg}{\mathcal{G}}

\newcommand{\gr}{\operatorname{Gr}}
\newcommand{\fl}{\operatorname{Fl}}
\newcommand{\grkn}{{\gr}_{k,n}}
\newcommand{\grkinf}{{\gr}_{k,\infty}}
\newcommand{\grknc}{\grkn(\C)}
\newcommand{\grkinfc}{\grkinf(\C)}
\newcommand{\tw}{\operatorname{tw}}
\newcommand{\base}{\operatorname{base}}
\newcommand{\diag}{\operatorname{\bf{diag}}}
\newcommand{\rest}{|_}
\newcommand{\bbar}{\overline}
\newcommand{\lbar}{\underline}
\newcommand{\UML}{\operatorname{\UU\MM\LL}}
\newcommand{\EL}{\mathcal{EL}}
\newcommand{\ncox}{{N_C([o,\xi))}}
\newcommand{\qle}{\lesssim}
\renewcommand{\Bbb}{\mathbb}

\def\ind{{\mathrm{ind}}}
\def\N{\mathbb{N}}
\def\L{\mathbb{L}}
\def\P{\mathbb{P}}
\def\Z{\mathbb{Z}}
\def\R{\mathbb{R}}
\def\S{\mathcal{S}}
\def\X{\mathcal{X}}
\def\E{\mathbb{E}}
\def\H{\mathbb{H}}
\def\l{\ell}
\def\Ups{\Upsilon}
\def\om{\omega}
\def\Om{\Omega}

\newcommand\Gomega{\Omega_\Gamma}
\newcommand\nomega{\omega_\nu}
\newcommand\omegap{{(\Omega,\P)}}
\newcommand\omegapp{{(\Omega',\P)}}

\DeclarePairedDelimiter\floor{\lfloor}{\rfloor}
\newcommand{\cF}{\mathcal{F}}
\newcommand{\cD}{\mathcal{D}}
\newcommand{\cZ}{\mathcal{Z}}
\newcommand{\cf}{\mathcal{F}}
\newcommand{\cA}{\Omega}
\newcommand{\cB}{\mathcal{B}}
\newcommand{\cC}{\mathcal{C}}
\newcommand{\cT}{\mathcal{T}}
\newcommand{\rb}{\mathfrak{A}}
\newcommand{\Var}{\hbox{Var}}
\newcommand{\ee}{\hbox{e}_1}
\newcommand{\cd}{\mathcal{D}}
\newcommand{\ce}{\mathcal{E}}

\newcommand{\bcomment}[1]{\textcolor{blue}{#1}}
\newcommand{\boundary}{\partial}
\newcommand{\dpt}{\mathrm{dpt}}

\title{The Gromov-Tischler theorem for stratified spaces}

\date{\today}

\author{Mahan Mj}
\address{School
	of Mathematics, Tata Institute of Fundamental Research. 1, Homi Bhabha Road, Mumbai-400005, India}

\email{mahan@math.tifr.res.in}

\author{Balarka Sen}
\address{School
	of Mathematics, Tata Institute of Fundamental Research. 1, Homi Bhabha Road, Mumbai-400005, India}
 \email{balarka@math.tifr.res.in}

\thanks{MM is supported by the Department of Atomic Energy, Government of India, under project no.12-R\&D-TFR-5.01-0500;
	and in part by a DST JC Bose Fellowship,  and an endowment from the Infosys Foundation.
}
\subjclass[2010]{53D99, 58A35, 57R17 (Primary), 32S60,53D05 }
\keywords{stratified spaces, isosymplectic embedding, h-principle, stratified symplectic form}

\date{\today}

\begin{abstract}
We define a notion of a symplectic structure on stratified spaces, and demonstrate that given a symplectic structure on a stratified space $X$ with integral cohomology class, $X$ can be symplectically embedded in some complex projective space equipped with the standard K\"ahler form. This extends a theorem, due to Gromov and Tischler for manifolds, to stratified spaces.
\end{abstract}

\maketitle

\tableofcontents

\section{Introduction}\label{sec-intro}

 A celebrated theorem due to Gromov \cite[Section 3.4.2]{Gromov_PDR}, \cite{gromov-icm70} and Tischler \cite{tischler} says the following:

\begin{thm}\label{thm-gt-mfld}
Let $(M,\om)$ be a closed symplectic manifold with $[\omega] \in H^2(M; \Bbb Z)$. Then there exists $n \in \natls$ and an embedding $f : M \to \mathbb{CP}^n$ such that $f^*\omega_n = \omega$, where $\omega_n$ is the standard K\"{a}hler form on $\mathbb{CP}^n$.
\end{thm}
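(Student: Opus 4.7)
The plan is to follow Tischler's strategy: first produce a smooth map $f_0 \colon M \to \mathbb{CP}^N$ pulling back $\omega_N$ to $\omega$ modulo an exact correction, and then eliminate that correction by composing with a carefully chosen auxiliary map. First, the integrality $[\omega] \in H^2(M;\mathbb{Z})$ produces a Hermitian line bundle $L \to M$ equipped with a unitary connection $\nabla$ whose curvature $2$-form equals $-2\pi i\omega$ (Weil's prequantum construction; equivalently, $\mathbb{CP}^\infty$ is an Eilenberg--MacLane space $K(\mathbb{Z},2)$ classifying integral second cohomology). Choosing $N+1$ smooth sections $s_0,\dots,s_N$ of $L$ (possibly after passing to a large tensor power) with no common zero yields a smooth map
$$
f_0\colon M \to \mathbb{CP}^N, \qquad x \mapsto [s_0(x):\cdots:s_N(x)].
$$
Since $f_0^*\mathcal{O}(1)\cong L$ as Hermitian line bundles, $f_0^*\omega_N$ represents $c_1(L)=[\omega]$, so $f_0^*\omega_N=\omega+d\alpha$ for some smooth $1$-form $\alpha$ on $M$. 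A standard transversality argument after enlarging $N$ arranges $f_0$ to be an embedding.

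Next, I would promote the cohomological equality to a pointwise one by exploiting the Segre embedding $\sigma\colon \mathbb{CP}^N\times \mathbb{CP}^m \hookrightarrow \mathbb{CP}^n$, with $n=(N+1)(m+1)-1$, which satisfies $\sigma^*\omega_n = p_1^*\omega_N + p_2^*\omega_m$. It then suffices to construct a smooth map $g\colon M \to \mathbb{CP}^m$ with $g^*\omega_m = -d\alpha$, because then $F = \sigma\circ(f_0,g)$ satisfies $F^*\omega_n = \omega$. To build $g$, I would use a partition of unity to write $-d\alpha = \sum_i d\varphi_i\wedge d\psi_i$ for smooth real-valued functions $\varphi_i,\psi_i$ on $M$ (a Darboux-type algebraic decomposition of a $2$-form on a manifold), and assemble a map into an affine chart $\mathbb{C}^m\subset \mathbb{CP}^m$ built from the pairs $(\varphi_i,\psi_i)$; since $\omega_m$ agrees with $\tfrac{1}{\pi}\sum d\varphi_i\wedge d\psi_i$ at the origin and differs from it by higher-order terms that themselves take the same algebraic shape, one iteratively corrects the error at the cost of enlarging $m$.

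Finally, since $F^*\omega_n=\omega$ is nondegenerate, $F$ is automatically an immersion, and a Whitney-type general position argument after a further enlargement of $n$ upgrades it to an embedding. The main obstacle is the middle step: realizing the exact $2$-form $-d\alpha$ on the nose as a pullback $g^*\omega_m$. There is no cohomological obstruction since $-d\alpha$ is exact, but producing $g$ explicitly requires careful use of the nonlinearity of the Fubini--Study form together with the freedom to enlarge the target dimension, and forms the technical heart of Tischler's argument. I expect this to be precisely the step whose analog needs to be re-proved in the stratified setting considered in the rest of the paper, since partitions of unity, Darboux-type decompositions, and transversality all become delicate in the presence of singular strata.
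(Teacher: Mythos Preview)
Your proposal is essentially correct and follows Tischler's original route. Note that the paper does not give a standalone proof of this manifold statement; it is cited as a classical result. However, the paper's proof of the stratified generalization (Theorem~\ref{thm-gt}) specializes to the manifold case, and that argument differs from yours in the ``upgrade'' step.

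Both your approach and the paper's begin identically: use $\mathbb{CP}^\infty \simeq K(\mathbb Z, 2)$ to produce an embedding $f : M \to \mathbb{CP}^N$ with $f^*\omega_N = \omega + d\alpha$, then decompose $\alpha = \sum_i p_i\, dq_i$ by a partition of unity so that $d\alpha = \sum_i dp_i \wedge dq_i$. The divergence is in how one absorbs the summands. You use the Segre embedding $\sigma : \mathbb{CP}^N \times \mathbb{CP}^m \hookrightarrow \mathbb{CP}^n$ together with an auxiliary map $g : M \to \mathbb{CP}^m$ landing in an affine chart, and then must iterate to control the higher-order discrepancy between the Fubini--Study form and the flat form on that chart. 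The paper instead stabilizes one dimension at a time, $\mathbb{CP}^N \subset \mathbb{CP}^{N+1}$, and invokes the \emph{symplectic} tubular neighborhood theorem to trivialize a disk bundle of $\mathcal{O}(1)$ over a chart $V_j$ as $(V_j \times \mathbb D(\varepsilon), \omega_N \oplus dz \wedge d\bar z)$. A single map $h_i : W_i \to \mathbb D(\varepsilon)$ with $h_i^*(dz \wedge d\bar z) = dp_i \wedge dq_i$ then kills the $i$-th summand exactly, with no iterative correction; repeating $k$ times yields $g : M \to \mathbb{CP}^{N+k}$ with $g^*\omega_{N+k} = \omega$. The paper's route is thus slightly cleaner (no convergence or iteration), while yours is closer to Tischler's original and makes the role of the Segre product explicit. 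Your closing remark is apt: in the stratified proof the new content is indeed upstream of this step, in producing the compressed class $[\Pi^*\omega]$ and the relation $f^*\omega_N = \omega + d\alpha + \beta$ with $\beta$ vanishing stratumwise; the corrugation step itself carries over verbatim on the ambient neighborhood $U$.

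One small caveat: your parenthetical ``possibly after passing to a large tensor power'' would replace $\omega$ by $k\omega$, which is not what you want. For a merely smooth (not holomorphic) classifying map no positivity or ampleness is needed, so drop that clause and rely directly on the $K(\mathbb Z,2)$ description, as the paper does.
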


In  this paper, we first generalize Theorem \ref{thm-gt-mfld} to the context of Whitney 
stratified spaces (see Theorem \ref{thm-gt}). But at the outset, we need to develop the necessary tools to answer the following basic question due to Gromov
\cite[p. 343]{Gromov_PDR}:

\begin{qn}\label{qn-gromov}
	Can one define singular symplectic (sub) varieties? 
\end{qn}

Answers to Question \ref{qn-gromov} have appeared in the context of singular complex varieties and symplectic reduction\cite{SL_stratsympred,DJ_singsymp}.  We develop the general theory of symplectic stratified spaces here from a symplectic geometry viewpoint (as opposed to the more algebraic Poisson structure perspective of \cite{sjamaar-lms,sjamaar-rednquant}). The main theorem of this paper is the following (see Theorem \ref{thm-gt} for a more general statement):

\begin{thm}\label{thm-gt-intro}  Let $(X, \omega)$ be a compact stratified symplectic space where $\omega \in \Omega^2(X)$ is  integral.  There exists $N \geq 1$ and an embedding $f : X \to \mathbb{CP}^N$ such that $f^*\omega_N = \omega$, where $\omega_N$ is the standard K\"{a}hler form on $\mathbb{CP}^N$. 
\end{thm}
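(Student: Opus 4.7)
The plan is to follow the strategy of Gromov and Tischler in the stratified setting: produce a stratified smooth map $X \to \mathbb{CP}^N$ whose Fubini-Study pullback represents $[\omega]$; correct it via a stratified Moser isotopy so that the pullback equals $\omega$ exactly; and finally combine with a topological embedding via the Segre construction to upgrade an isosymplectic map into an isosymplectic embedding.

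First, since $H^2(X;\Z) \cong [X, \mathbb{CP}^\infty]$ and $X$ is compact, the classifying map factors through a finite $\mathbb{CP}^N$, producing a stratified smooth map $f_0 : X \to \mathbb{CP}^N$ with $[f_0^*\omega_N] = [\omega]$. Concretely, $[\omega]$ is the first Chern class of a stratified complex line bundle $L \to X$, which may be equipped with a prequantum Hermitian connection of curvature $-2\pi i \omega$; the map $f_0$ then arises from a Kodaira-type construction using finitely many sections of a high tensor power of $L$. The discrepancy $f_0^*\omega_N - \omega$ is exact, say $d\alpha$ for some stratified $1$-form $\alpha$.

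Next I would apply a stratified Moser trick to the path $\omega_t = (1-t)\omega + t f_0^*\omega_N$. Assuming nondegeneracy of each $\omega_t$ on every stratum (ensured by an initial rescaling if necessary), solve $\iota_{V_t}\omega_t = -\alpha$ for a time-dependent stratified vector field $V_t$ and integrate it to an isotopy $\phi_t$ with $\phi_1^* f_0^*\omega_N = \omega$. Set $g := f_0 \circ \phi_1$; this is an isosymplectic stratified map but not necessarily an embedding. To fix this, pick any topological stratified embedding $h : X \hookrightarrow \mathbb{CP}^{N'}$ that factors through an affine chart $\R^{2N'} \hookrightarrow \mathbb{CP}^{N'}$, precomposed with a scaling that makes $h^*\omega_{N'}$ as small as desired. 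Then $\omega - h^*\omega_{N'}$ is still symplectic (by smallness of $h^*\omega_{N'}$) and integral (since $[\omega_{N'}]$ is integral), so the isosymplectic construction above applied to $\omega - h^*\omega_{N'}$ produces $g' : X \to \mathbb{CP}^N$ with $g'^*\omega_N = \omega - h^*\omega_{N'}$. Composing $(g',h)$ with the Segre embedding $\mathbb{CP}^N \times \mathbb{CP}^{N'} \hookrightarrow \mathbb{CP}^{(N+1)(N'+1)-1}$ then gives a stratified embedding pulling the ambient Fubini-Study form back to exactly $\omega$.

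The principal obstacle is the stratified Moser step: the isotopy $\phi_t$ must preserve each stratum, forcing $V_t$ to be tangent to every stratum. Hence the primitive $\alpha$ cannot be chosen arbitrarily, but must be selected so that contraction with $\omega_t$ produces compatible stratum-tangent vector fields that glue to a global stratified vector field on $X$. I expect this to require a relative Poincar\'e lemma for Whitney stratified spaces, proven by induction on stratification depth and exploiting Thom-Mather control data on tubular neighborhoods to propagate primitives outward from the deeper strata to the shallower ones.
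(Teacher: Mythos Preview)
Your proposal has a genuine gap at the Moser step, and it is more serious than the stratified-tangency issue you flag at the end. The classifying map $f_0 : X \to \mathbb{CP}^N$ is a priori only a stratumwise-smooth map in the correct homotopy class; there is no reason for $f_0^*\omega_N$ to be nondegenerate on any stratum --- indeed $f_0$ could be constant on large pieces of $X$. Thus $\omega_t = (1-t)\omega + t f_0^*\omega_N$ will typically fail to be symplectic near $t = 1$, and no ``initial rescaling'' repairs this: rescaling $\omega$ alters the integral class, while rescaling $f_0$ still leaves $\omega_1 = f_0^*\omega_N$ degenerate. Your appeal to a Kodaira-type construction with peak sections would require a K\"ahler structure, which a general symplectic stratified space does not possess; producing an $f_0$ good enough for Moser is essentially the theorem itself. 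There is also a foundational point you skip: you write ``$[f_0^*\omega_N] = [\omega]$'', but $\omega$ is only \emph{stratumwise} closed and so does not directly define a class in $H^2(X;\Bbb R)$. The paper handles this via the compression map $\Pi$ (Proposition~\ref{strat-compress}) and works with $[\Pi^*\omega]$ throughout.

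The paper's argument avoids Moser entirely. After arranging $[f^*\omega_N] = [\Pi^*\omega]$ on an ambient smooth neighborhood $U \supset X$, it uses Cartan's formula along the compression homotopy to write $f^*\omega_N = \omega + d\alpha + \beta$ on $U$, where $\beta$ is a smooth $2$-form with $\iota_S^*\beta = 0$ for every stratum $S$. It then decomposes $\alpha = \sum_{i=1}^k p_i\,dq_i$ via a finite chart cover of $X$ and absorbs each $dp_i \wedge dq_i$ by a Nash--Gromov--Tischler corrugation: stabilize $f$ to $\mathbb{CP}^{N+1}$ and push it into the normal $\mathbb{D}(\varepsilon)$-direction over a trivializing chart by a map $h_i$ satisfying $h_i^*(dz \wedge d\bar z) = dp_i \wedge dq_i$. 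Iterating $k$ times yields an embedding $g : U \to \mathbb{CP}^{N+k}$ with $g^*\omega_{N+k} = \omega + \beta$, hence equal to $\omega$ on every stratum. Because all of this is carried out on the smooth manifold $U$, no stratified vector fields, no stratified Moser isotopy, and no Segre-type repair step are needed.
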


There are two basic tools we use to prove Theorem \ref{thm-gt-intro}:
\begin{enumerate}
\item a compression lemma (Proposition \ref{strat-compress}) that allows us to use a smooth stratified map to go between strata 
\item a variant of work of Verona (Proposition \ref{prop-che}) establishing a chain homotopy equivalence between the chain complex of smooth forms and the 
chain complex of controlled forms.
\end{enumerate}

A  construction going back to Nash (and refined by Gromov and Tischler)  can then be reworked in our context (Section \ref{sec-ngt}) to complete the proof.

Using Theorem \ref{thm-gt-intro}, we establish a relationship between the following notions (see Proposition \ref{prop-alimpliesgeo}):
\begin{enumerate}
\item The notion of a Poisson-symplectic stratified space introduced by Sjamaar-Lerman \cite{SL_stratsympred} in a Poisson algebra setup.
\item The dual geometric notion of a symplectic stratified space dealt with in this paper.
\end{enumerate} 
Proposition \ref{prop-alimpliesgeo} allows us to go from the algebra to the geometry using the inverse  of a matrix representation
of the symplectic form. This vindicates a hope expressed by Sjamaar-Lerman in \cite[p. 412]{SL_stratsympred} in the context of Cushman's conjecture \cite[Conjecture 6.14]{SL_stratsympred}
predicting universality of $\R^n$ for symplectic reductions:\\
``Quite possibly the symplectic embedding theorems of Gromov and Tischler  could be relevant in this context."\\
Proposition \ref{prop-geoimpliesal} in the converse direction allows us to go from the geometry to the algebra, using Theorem 
\ref{thm-gt-intro}. Here, we use a modified algebra of smooth functions $C^\infty (\OO p (X))$ consisting
of restrictions of smooth functions to a germ of an open neighborhood $\OO p (X)$ of $X$ in a projective space (provided by Theorem 
\ref{thm-gt-intro}).
While Cushman's conjecture  has turned out to be false in its strongest form, and approaches and partial positive results \cite{sjamaar-lms,lermann-expo} have utilized a primarily algebraic approach, Theorem \ref{thm-gt-intro} shows that a dual geometric statement is indeed true, if we replace $\R^n$ by
$\C P^n$.

Finally, we generalize a theorem of Gotay-Tuynman \cite{gotay-tuynman-sympred} to symplectic stratified spaces using a completely different argument from the one in
\cite{gotay-tuynman-sympred}. We use Theorem \ref{thm-gt-intro} in an essential way.

\begin{thm}[see Theorem \ref{gotay}]\label{gotay-intro} Let $(X, \omega)$ be a compact symplectic stratified space. Then there exists $n \geq 1$ and a Whitney stratified set $(Y, \Sigma) \subset \Bbb R^{2n}$ equipped with a map $f : Y \to X$ such that 
	\begin{enumerate}
		\item $(Y, X, f)$ is a fiber bundle with smooth manifold fibers.
		\item For every stratum $S \in \Sigma$, $\ker \iota^*_S \omega_n = \ker df|_{S}$
	\end{enumerate}
\end{thm}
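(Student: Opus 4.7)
The plan is to combine the Gromov--Tischler embedding from Theorem \ref{thm-gt-intro} with the classical realization of $\mathbb{CP}^N$ as a symplectic quotient of $\mathbb{R}^{2N+2}$ by the diagonal Hopf circle action. Concretely, let $j : X \hookrightarrow \mathbb{CP}^N$ be the symplectic embedding supplied by Theorem \ref{thm-gt-intro}, so that $j^* \omega_N = \omega$. Let $\pi : S^{2N+1} \to \mathbb{CP}^N$ denote the Hopf fibration, viewed as the restriction of the quotient map $\mathbb{C}^{N+1} \setminus \{0\} \to \mathbb{CP}^N$ to the unit sphere. Setting $n = N+1$, I would define
\[
Y \;:=\; \pi^{-1}(j(X)) \;\subset\; S^{2N+1} \;\subset\; \mathbb{R}^{2n},
\]
endow it with the stratification $\Sigma = \{\widetilde{S} := \pi^{-1}(j(S))\}$ indexed by the strata $S$ of $X$, and take $f := j^{-1} \circ \pi|_Y : Y \to X$.

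To verify (1), I would observe that $\pi$ is a smooth principal $S^1$-bundle, in particular a surjective submersion onto $\mathbb{CP}^N$, so its restriction over $j(X)$ is again a smooth principal $S^1$-bundle and $f$ is a fiber bundle with smooth fiber $S^1$. Since Whitney (a) and (b) regularity are preserved under preimages by smooth submersions, $\Sigma$ is a Whitney stratification of $Y$ as a subset of $S^{2N+1}$, and hence, since $S^{2N+1}$ is a smooth hypersurface, as a subset of $\mathbb{R}^{2n}$. In particular each $\widetilde{S}$ is a smooth submanifold of $\mathbb{R}^{2n}$ and $\ker df|_{\widetilde{S}}$ is exactly the vertical distribution tangent to the Hopf fibers contained in $\widetilde{S}$.

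To verify (2), the key input is the defining reduction identity $\pi^* \omega_N = \iota^* \omega_n$ (up to a nonzero scaling, immaterial for kernels), where $\iota : S^{2N+1} \hookrightarrow \mathbb{R}^{2n}$ is the inclusion; this expresses the Fubini--Study form as the symplectic reduction of the standard form on $\mathbb{R}^{2n}$. Restricting to a stratum $\widetilde{S}$ and using $j^* \omega_N = \omega$ gives
\[
\iota_{\widetilde{S}}^* \omega_n \;=\; (f|_{\widetilde{S}})^* (\omega|_S),
\]
where $\iota_{\widetilde{S}} : \widetilde{S} \hookrightarrow \mathbb{R}^{2n}$ is the inclusion. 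Since the definition of a symplectic stratified space makes $\omega|_S$ a genuine symplectic (hence non-degenerate) form on the smooth manifold $S$, the kernel of the right-hand side at each point is precisely $\ker df|_{\widetilde{S}}$, yielding (2). The main obstacle, though largely a matter of bookkeeping, is confirming that the Whitney stratification lifts cleanly through $\pi$ and that the Hopf reduction identity behaves well stratum-by-stratum; once these points are laid out, the theorem follows essentially formally from Theorem \ref{thm-gt-intro}.
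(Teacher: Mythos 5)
Your argument has a genuine gap at the very first step: you invoke Theorem \ref{thm-gt-intro} to produce the isosymplectic embedding $j : X \to \mathbb{CP}^N$, but that theorem requires $\omega$ to be \emph{integral}, whereas Theorem \ref{gotay-intro} makes no integrality assumption. This is not a cosmetic omission: if $[\Pi_X^*\omega]$ is not in the image of $H^2(X;\mathbb{Z}) \to H^2(X;\mathbb{R})$, then no embedding with $j^*\omega_N = \omega$ on $X$ can exist, since $[\Pi_X^*(j^*\omega_N)] = [(j\circ\Pi_X)^*\omega_N]$ is pulled back from an integral class. So as written your proof only establishes the theorem under an extra hypothesis. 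The paper handles the general case by writing $\omega = \sum_i c_i\omega_i$ with each $\omega_i$ integral (using the general form of Theorem \ref{thm-gt}, which does not require stratumwise nondegeneracy of the $\omega_i$), and replacing the circle bundle by the associated principal torus bundle. Your Hopf-fibration picture can in principle be adapted — embed $X$ into a product $\prod_i \mathbb{CP}^{N}$ via maps $f_i$ with $c_i f_i^*\eta_i = \omega_i$ and pull back the product of Hopf fibrations over spheres of radii $\sqrt{c_i}$ — but this requires $c_i > 0$ and a normalization argument, and none of it is in your write-up.

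For the integral case, your route is correct but genuinely different from the paper's in its second half, and worth comparing. The paper never embeds $X$ into $\mathbb{CP}^N$ in this proof: it forms the abstract principal $U(1)$-bundle $f : Y \to X$ with Chern class $[\Pi_X^*\omega]$, observes that $[\Pi_Y^* f^*\omega] = f^*[\Pi_X^*\omega] = 0$ (using that pullbacks of controlled forms are controlled and the functoriality of Verona's isomorphism), and then applies Corollary \ref{cor-0impliesrn} — the $h$-principle for exact forms — to embed $Y$ isosymplectically in $\mathbb{R}^{2n}$. You instead embed $X$ first and realize $Y$ concretely as $\pi^{-1}(j(X)) \subset S^{2N+1}$, getting the embedding into $\mathbb{R}^{2n}$ for free from the reduction identity $\iota^*\omega_n = \pi^*\omega_N$ (up to the scaling you correctly note is irrelevant for kernels). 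Your approach trades the controlled-form bookkeeping and the appeal to Corollary \ref{cor-0impliesrn} for the standard facts that Whitney regularity is preserved under preimages by submersions and that $\mathbb{CP}^N$ is the Hopf reduction of the sphere; both of those are legitimate, and your verification of (2) via nondegeneracy of $\omega_S$ and surjectivity of $df$ on each stratum is sound. Fix the nonintegral case and the proof is complete.
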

Theorem \ref{gotay-intro} essentially says that $\Bbb R^{2n}$ is universal for symplectic reductions in the setup of compact symplectic stratified spaces.
\\

\noindent {\bf Acknowledgments:} We thank Arvind Nair for pointing us to Verona's work \cite{Verona_derham}. It was the comment of Sjamaar-Lerman in  \cite[p. 412]{SL_stratsympred} alluded to above that motivated us to find the equivalence between the algebraic and geometric perspectives in Section \ref{sec-sjamaar}.

\section{Preliminaries on stratified objects}\label{sec-stratprel}

\subsection{Abstractly stratified spaces}\label{sec-ss}
\begin{defn}  Let $X$ be a second countable metric space, and $(I, \leq)$ be a poset. An {\bf $I$-decomposition} of $X$ is a locally finite collection of pairwise disjoint, locally closed subsets $\Sigma = \{S_i : i \in I\}$ of $X$, termed the {\bf strata} of $X$, such that
\begin{enumerate}
\item $X = \bigcup_{i \in I} S_i$
\item $S_i$ is a topological manifold for each $i \in I$
\item $S_i \cap \overline{S_j} \neq \emptyset$ if and only if $S_i \subseteq \overline{S_j}$ if and only if $i \leq j$
\end{enumerate}\end{defn}

We shall carry over the partial order on $I$ along the natural bijection $\Sigma \to I$, $S_i \mapsto i$, to the set of all strata $\Sigma$. This canonically makes the $I$-decomposition on $X$ a $\Sigma$-decomposition. Therefore we shall often omit the indexing set $I$ and simply refer to a {\it decomposition} of $X$, and call the pair $(X, \Sigma)$ a {\it decomposed space}, although we keep the indexing set in the background for notational ease.

Associated to a decomposed space $(X, \Sigma)$, some useful notions are that of {\it depth} and {\it dimension} respectively. Given a stratum $S \in \Sigma$, we define the depth of $S$:
$$\dpt(S) := \sup\{n : \exists \;\text{a chain of strata}\; S = S_0 < S_1 < \cdots < S_n\}$$
The depth of $X$ itself is defined to be $\dpt(X) := \sup\{\dpt(S) : S \in \Sigma\}$. Likewise, the dimension of $X$ is defined to be $\dim(X) := \sup\{\dim(S) : S\in \Sigma\}$. 

For a pair of strata $S, L \in \Sigma$, the condition ``$S \cap \overline{L} \neq \emptyset$ if and only if $S \subset \overline{L}$" indicates that ``deeper" strata always lie in the topological boundary of the ``shallower" strata in $X$. We shall use the notation $\partial L := \overline{L} \setminus L$ for the topological boundary or the frontier of $L$ in $X$. Observe that $\partial L$ is naturally decomposed as it is a union of strata of $X$ that lie deeper than $L$.

We shall use the notion of {\it abstract stratifications} developed by Thom \cite{Thom_stratmaps} and Mather \cite{Mather_notes}. 
 Given a decomposed space $(X, \Sigma)$, a {\it tube system} $\mathscr{T}$ associated to the decomposition is a collection of open sets $\{T_i : i \in I\}$ such that each $T_i$ is an open neighborhood of the stratum $S_i$ in $X$, referred to as the {\it tubular neighborhood} of $S_i$ in $X$, along with a retraction map $\pi_i : T_i \to S_i$ called the {\it tubular projection}, as well as a continuous function $\rho_i : T_i \to [0, \infty)$ such that $\rho_i^{-1}(0) = S_i$. We shall call $\rho_i$ the {\it radial function} associated to the tube.

Given a pair of indices $i \leq j$, one also defines the {\it tubular neighborhood of $S_i$ in $S_j$} to be $T_{ij} := T_i \cap S_j$ and the restrictions of the tubular and radial functions are denoted as $\pi_{ij} := \pi_i|T_{ij}$ and $\rho_{ij} := \rho_i|T_{ij}$. Notice that $\rho_{ij}$ is strictly positive for $i < j$.
We denote a decomposed space $(X, \Sigma)$ equipped with a tube system $\mathscr{T}$ by the  triple $(X, \Sigma, \mathscr{T})$. 

\begin{defn}$(X, \Sigma, \mathscr{T})$ is said to be {\bf abstractly stratified} if
\begin{enumerate}
\item Each stratum $S \in \Sigma$ admit a $\mathcal{C}^\infty$-structure
\item For any pair $i, j \in I$ of indices with $i \leq j$, the tubular projection and radial functions $\pi_{ij}, \rho_{ij}$ are smooth
\item $(\pi_{ij}, \rho_{ij}) : T_{ij} \to S_i \times (0, \infty)$ is a submersion
\item For any triple $i, j, k \in I$ of indices with $i \leq j \leq k$, the following holds:
	\begin{enumerate}
	\item {\bf $\pi$-control condition}: $\pi_{ij} \circ \pi_{jk} = \pi_{ik}$ on $T_{ik} \cap T_{jk} \cap \pi_{jk}^{-1}(T_{ij})$
	\item {\bf $\rho$-control condition}: $\rho_{ij} \circ \pi_{jk} = \rho_{ik}$ on $T_{ik} \cap T_{jk} \cap \pi_{jk}^{-1}(T_{ij})$
	\end{enumerate}
\end{enumerate}
\end{defn}

At first glance this might seem technical, so we elucidate the notion by briefly explaining the purpose of each of these conditions: 

Condition $(1)$ imposes a smooth structure on each stratum, and the rest of the conditions are trying to specify how these smooth structure should interact near the boundary of a stratum with the smooth structures on the deeper strata. 

Conditions $(2)$ and $(3)$ tell us that $(\pi_{ij}, \rho_{ij})$ give \emph{partial cylindrical coordinates} for each of the tubes around the stratum $S_i$ in $S_j$, much like tubular neighborhoods of smooth submanifolds of a manifold. One should imagine $\rho_i$ as a radial distance function from $S_i \subseteq X$. 

Condition $(4)$, or the {\it control conditions}, demand that the tubular neighborhoods of each strata are so arranged that the projection maps are all compatible with each other, and moreover the projection maps  preserve radial co-ordinates. 

\begin{defn}\label{tubes-eq}A pair of abstract stratifications induced by tube systems $\mathscr{T}$ and $\mathscr{T}'$ on a given decomposed space $(X, \Sigma)$ are said to be {\bf equivalent} if $\pi_i, \pi_i'$ and $\rho_i, \rho_i'$ agree on an open neighborhood of $S$ contained in $T_i \cap T_i'$ for all $i \in I$.\end{defn}

\begin{rmk}\label{tubes-germs}We shall often replace a given tube system by a conveniently chosen tube system equivalent to it, and therefore it is best to think of the tubes as defined up to germinal equivalence, and equipped with a germ of a retraction and a germ of a tubular function. \end{rmk}

\begin{rmk}\label{strat-pullback} {\rm Given a pair of abstractly stratified spaces $(X, \Sigma, \mathscr{T})$ and $(X', \Sigma', \mathscr{T}')$, a topological embedding $f : X \hookrightarrow X'$ gives rise to a decomposition of $X$ by $f^* \Sigma := \{f^{-1}(S') : S' \in \Sigma'\}$ and a tube system $f^* \mathscr{T}'$  such that $(X, f^* \Sigma', f^*\mathscr{T}')$ is abstractly stratified as well. More explicitly, elements of 
	$f^* \mathscr{T}'$ are of the form  $T_i' \cap f(X)$, where $T_i' \in 
	\mathscr{T}'$. The tubular projection functions $f^*\pi_i'$ are given by restrictions of $\pi_i'$ to strata of $f(X)$, and radial functions $f^*\rho_i'$ are given by $f^*\rho_i' = \rho_i'\circ f$.}
	
{\rm 	We say $f$ is an {\bf isomorphism} if it is a homeomorphism, $f^* \Sigma' = \Sigma$ and the abstract stratifications are equivalent.}
\end{rmk}

\begin{rmk}\label{strat-pullback2} {\rm 
A similar notion of pullback can be defined if $f : X \rightarrow X'$ is a bundle with  fiber a smooth manifold $F$. Then strata (resp.\ tubular neighborhoods) of $X$ are given by $F-$bundles over strata $S_i'$ (resp.\ induced  $F-$bundles over tubular neighborhoods $T_i'$). Radial functions $f^*\rho_i'$ are given by $f^*\rho_i' = \rho_i'\circ f$ as before. }

 {\rm Next, to define projection functions $f^*\pi_i'$, let $A$ denote the link of $S_i'$ in $X'$. Let $cA$ denote the cone over $A$. Then $T_i'$ is a $cA-$bundle over  $S_i'$. Let $E_i' = f^{-1}(S_i')$ (resp.\  $E(T_i') = f^{-1}(T_i')$) denote the restriction of the bundle $f : X \rightarrow X'$ to $S_i' \subset X'$ (resp.\ $T_i' \subset X'$).
Since $\pi_i': T_i'\to S_i'$ is a strong deformation retract (with contractible fiber $cA$), $E(T_i')$ is  equivalent to $(\pi_i')^* E_i'$. There is a natural bundle map $(\pi_i')^* E_i'\to E_i'$ covering $\pi_i'$. Composing this with the equivalence with $E(T_i')$ furnishes the required  projection function $f^*\pi_i'$.}
\end{rmk}

We follow below the convention of \cite[Section 6]{Mather_notes}:
$\varepsilon > 0$ will denote a positive function on the stratum $S_i$. Let  $$T_i(\varepsilon) := \{ x\in T_i: \ 0 \leq  \rho_i(x) < \varepsilon (\pi_i(x))\}$$ and $T_{ij}(\varepsilon) := T_i(\varepsilon) \cap S_j$ for $i < j$. We shall also denote $$\partial T_i(\varepsilon) :=  \{ x\in T_i: \  \rho_i(x) = \varepsilon(\pi_i(x))\}.$$ Here, we think of $\partial T_i(\varepsilon)$ as the ``unit normal bundle" to the strata $S_i \in \Sigma$. Note that $\partial T_i(\varepsilon)$ admits a natural abstract stratification given by pulling back the stratification $(\Sigma, \mathscr{T})$ on $X$ along the canonical inclusion $\partial T_i(\varepsilon) \hookrightarrow X$. \\

\subsection{Whitney stratified spaces}

The key reason for introducing abstract stratifications is that it happens to be exactly the right intrinsic definition of {\it Whitney stratified sets}, which are a universal model for most well-behaved singular spaces. We briefly recall the preliminaries and introduce the definition. \\

Let $(S, L)$ be a pair of smooth (not necessarily properly) embedded submanifolds of a smooth manifold $M$ such that $S \subset \overline{L}$. 
\begin{enumerate}
\item The pair $(S, L)$ is said to be {\bf $(a)$-regular} if for any sequence $\{x_n\}$ on $L$ converging to a point $y \in S$, such that the sequence of tangent spaces $T_{x_n} L$ converge to a plane $\tau \subset T_y M$, we have $T_y S \subset \tau$. 
\item The pair $(S, L)$ is said to be {\bf $(b)$-regular} if moreover for any sequence $\{y_n\}$ on $S$ which also converges to $y \in S$, such that the secant lines $\overline{x_n y_n}$ converge to a line $\ell \subset T_y M$, we have $\ell \subset \tau$.\\
\end{enumerate}

The notions of convergence of planes and lines mentioned are defined locally, by choosing a coordinate chart $(U, y) \cong (\mathbb R^n, 0)$ around $M$, inside which all but finitely many terms of the sequences $\{x_n\}$ and $\{y_n\}$ belong. The conditions as such are independent of the coordinate charts as can be easily seen using the chain rule, and moreover have invariant coordinate-independent formulations in terms of the Grassmann and secant manifolds (see \cite{Mather_notes} for further discussion). 

\begin{defn}A closed, decomposed subset $(W, \Sigma)$ of a smooth manifold $M$ is said to be {\bf Whitney stratified} if $\Sigma$ consists of smooth (not necessarily properly) embedded submanifolds of $M$, such that any pair of strata $S, L \in \Sigma$ with $S < L$ satisfy $(a)$- and $(b)$-regularity.
\end{defn}

The decomposition of an algebraic variety by its singularities of various depths is not in general a Whitney stratification, a famous counterexample being the Whitney umbrella. However, Whitney proved that a refinement of this stratification does give any algebraic variety the structure of a Whitney stratified set. It was subsequently established by Thom, Lojasiewicz, Hironaka et al that a wide variety of spaces, including semi-algebraic sets and subanalytic sets over either $\mathbb R$ or $\mathbb C$ admit Whitney stratifications.

\begin{thm}\label{strat-whitabs}\cite{Mather_notes} Let $(W, \Sigma)$ be a Whitney stratified subset of a smooth manifold $M$. Associated to each stratum $S \in \Sigma$ there exists a tubular neighborhood $\nu(S) \subset M$ (see discussion following Remark \ref{strat-pullback})
with tubular projection $\pi_S : \nu(S) \to S$, and equipped with a fiberwise norm $\rho_S$ induced from a fiberwise Riemannian metric, such that for all $S, L \in \Sigma$, $\nu(S) \cap \nu(L) \neq \emptyset$ only if $S < L$, in which case
\begin{enumerate}
\item $\pi_S \circ \pi_L = \pi_S$ throughout $\nu(S) \cap \pi_L^{-1}\nu(S)$
\item $\rho_S \circ \pi_L = \rho_L$ throughout $\nu(S) \cap \pi_L^{-1}\nu(S)$
\end{enumerate}
In particular, for any $S \in \Sigma$, define $T_S := \nu(S) \cap W$, equipped with the restrictions $\pi_S|W$ and $\rho_S|W$. This defines a tube system $\mathscr{T}$ on $(W, \Sigma)$ such that $(W, \Sigma, \mathscr{T})$ is abstractly stratified.
\end{thm}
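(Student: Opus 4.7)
The plan is to argue by downward induction on the depth of strata (equivalently, upward induction on the dimension of the closure), following the line of Mather's notes. Order the strata $\Sigma$ so that deeper strata come first: $S^{(0)}, S^{(1)}, \dots$ with $\dpt(S^{(k)})$ non-increasing. The inductive hypothesis is that after processing the first $k$ strata, one has, for every already-handled stratum $S$, an open neighborhood $\nu(S) \subset M$, a smooth retraction $\pi_S : \nu(S) \to S$, and a smooth non-negative function $\rho_S : \nu(S) \to [0, \infty)$ with $\rho_S^{-1}(0) = S$ that is fiberwise a squared norm for some smooth Riemannian metric, and such that $(\pi_S, \rho_S) : \nu(S) \setminus S \to S \times (0, \infty)$ is a submersion, and, crucially, whenever $\nu(S) \cap \nu(S') \neq \emptyset$ for two already-handled strata $S < S'$, the relations $\pi_S \circ \pi_{S'} = \pi_S$ and $\rho_S \circ \pi_{S'} = \rho_S$ hold on their common domain. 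The deepest strata are closed smooth submanifolds of $M$ (locally finite, pairwise disjoint, minimal for $\leq$), so the standard tubular neighborhood theorem in $M$ gives the base case: choose any Riemannian metric on $M$ and use the normal exponential map, with $\rho_S = \| \cdot \|^2$ in normal coordinates.

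For the inductive step, let $L$ be the next stratum to handle. The frontier $\partial L = \bar{L} \setminus L$ is a union of already-handled (deeper) strata $\{S_\alpha\}$, each equipped with its control data $(\pi_{S_\alpha}, \rho_{S_\alpha})$ on $\nu(S_\alpha)$. The first step is to construct $\pi_L, \rho_L$ on a neighborhood of each $S_\alpha$ that is \emph{already forced} by the control conditions. Concretely, on $\nu(S_\alpha) \cap \{$points close to $L\}$, any tube around $L$ must satisfy $\pi_{S_\alpha} \circ \pi_L = \pi_{S_\alpha}$ and $\rho_{S_\alpha} \circ \pi_L = \rho_{S_\alpha}$; therefore one defines $\pi_L$ on $\nu(S_\alpha) \cap L$ first as the identity, extends it to $\nu(S_\alpha)$ by requiring compatibility with $\pi_{S_\alpha}$, and similarly for $\rho_L$. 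The content here is to show such an extension exists and is smooth on an open neighborhood of $S_\alpha \cap \bar{L}$ in $M$; this uses Whitney's condition $(b)$ in an essential way, because $(b)$-regularity of the pair $(S_\alpha, L)$ is precisely what guarantees that $\pi_{S_\alpha}|_L : \nu_{S_\alpha}(S_\alpha) \cap L \to S_\alpha$ is a submersion near $S_\alpha$ (and similarly for the norm function), so that partial cylindrical coordinates along $L$ can be constructed consistently with those of $S_\alpha$.

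Away from $\partial L$, the stratum $L$ is closed in $M \setminus \partial L$ (after shrinking to a relatively compact neighborhood), so a standard tubular neighborhood $(\pi_L^{\mathrm{free}}, \rho_L^{\mathrm{free}})$ can be built from the ambient exponential map, adapted to the Riemannian metric already used. One then glues the ``boundary-forced" tube near $\bigcup_\alpha \nu(S_\alpha)$ to $(\pi_L^{\mathrm{free}}, \rho_L^{\mathrm{free}})$ using a smooth partition of unity subordinate to an open cover of a neighborhood of $L$ consisting of (a) small $\nu(S_\alpha)$-type sets near each deeper stratum in the frontier and (b) a set disjoint from $\partial L$. Here the local finiteness of $\Sigma$ ensures the partition of unity is well-defined; Whitney $(a)$ is used to confirm that after convex combination the resulting $\pi_L$ remains a submersion onto $L$ and $\rho_L$ a fiberwise norm, because the tangent spaces along approaching deeper strata limit onto planes containing $TL$.

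The main obstacle, and the point where care is required, is the extension step for $(\pi_L, \rho_L)$ near $\partial L$: one must ensure the extension is smooth across $\partial L$ in $M$ (not just on $L$), compatible with \emph{all} the control data of \emph{all} incident deeper strata simultaneously, and the $\pi$- and $\rho$-control conditions must survive partition-of-unity gluing. The verification amounts to checking that the control relations, being algebraic identities on overlaps, are preserved under convex combinations once they hold on each patch, and then invoking the $(a)$- and $(b)$-regularity together with the equivalence-of-tube-systems flexibility (Definition \ref{tubes-eq}, Remark \ref{tubes-germs}) to shrink and adjust as needed. Setting $T_S := \nu(S) \cap W$ with the restricted data yields the desired abstract stratification structure on $(W, \Sigma)$.
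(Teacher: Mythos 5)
The paper offers no proof of this statement: it is quoted verbatim from the Thom--Mather theory and cited to \cite{Mather_notes}, so the only meaningful comparison is with the standard argument there. Your sketch does follow the correct overall strategy of that argument --- induction in an order compatible with the partial order on strata, the ordinary tubular neighborhood theorem for the minimal (closed) strata as base case, the observation that the control conditions \emph{force} the values of $(\pi_L,\rho_L)$ over $\nu(S_\alpha)$, and the use of Whitney regularity to guarantee that $\pi_{S_\alpha}|_L$ (condition $(a)$) and $(\pi_{S_\alpha},\rho_{S_\alpha})|_L$ (condition $(b)$) are submersions near $S_\alpha$, which is what makes the forced extension possible.

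There is, however, a genuine gap at the gluing step. You propose to combine the boundary-forced tube with a free tube $(\pi_L^{\mathrm{free}},\rho_L^{\mathrm{free}})$ by a partition of unity, and you assert that ``the control relations, being algebraic identities on overlaps, are preserved under convex combinations once they hold on each patch.'' This is false as stated: the identity $\pi_{S_\alpha}\circ\pi_L=\pi_{S_\alpha}$ is not affine in $\pi_L$, so a convex combination (however one makes sense of it, e.g.\ in local coordinates or via an exponential map on $L$) of two retractions each satisfying it need not satisfy it; moreover $\pi_L$ is $L$-valued, so the convex combination itself requires justification. The same objection applies to $\rho_{S_\alpha}\circ\pi_L=\rho_{S_\alpha}$, which involves $\pi_L$ and not just the real-valued $\rho_L$. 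Mather's actual proof circumvents this by working at the level of tubular neighborhoods \emph{compatible with a given family of submersions}: one proves existence and uniqueness-up-to-isotopy of tubular neighborhoods of $L$ whose projection commutes with the submersions $(\pi_{S_\alpha},\rho_{S_\alpha})|_L$, and the patching of local candidates is done by isotopies of tubular neighborhoods (equivalently, at the level of the normal bundle data), not by pointwise averaging of the projections. You correctly flag this step as ``the main obstacle,'' but the mechanism you offer to resolve it does not work; to complete the proof you would need to import or reprove the compatible-tubular-neighborhood existence and uniqueness theorems, which is where the real content of the theorem lies.
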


There is a converse to this theorem, which is essentially a Whitney embedding theorem for abstractly stratified spaces. A {\bf realization} of an abstractly stratified space $(X, \Sigma, \mathscr{T})$ in a smooth manifold $M$ is an embedding $\iota : X \to M$ such that $Y := \iota(X) \subset M$ admits the structure of a Whitney stratified subset $(Y, \Sigma')$ of $M$, and is moreover equipped with a tube system $\mathscr{T}'$ as in Theorem \ref{strat-whitabs} so that $(Y, \Sigma', \mathscr{T}')$ is an abstractly stratified space, and the map $\iota : (X, \Sigma, \mathscr{T}) \to (Y, \Sigma', \mathscr{T}')$ is an isomorphism.

\begin{thm}[\cite{teu,goresky-thesis}, see also \cite{Natsume_whitney}]\label{strat-natsume} Any abstractly stratified space $(X, \Sigma, \mathscr{T})$ of finite dimension $n := \dim X$ admits a realization in $\mathbb R^N$ for any $N \geq 2n + 1$.

Furthermore, if $N \geq 2n+2$, any pair of such realizations $\iota_0, \iota_1 : X \to \mathbb R^N$ are isotopic, in the following sense: there is a realization 
$$\mathcal{I} : (X, \Sigma, \mathscr{T}) \times (-\delta, 1 + \delta) \to \mathbb R^N$$ 
such that $\mathcal{I}(-, t)$ is a realization for all $t \in [0, 1]$ and $\mathcal{I}(-, 0) = \iota_0$, $\mathcal{I}(-, 1) = \iota_1$.
\end{thm}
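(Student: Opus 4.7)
The plan is to prove the existence of a realization by induction on the depth $d = \dpt(X)$, mirroring the classical Whitney embedding proof while using the tube system of the abstract stratification at each stage to control the Whitney regularity of the image. In the base case $d = 0$, $X$ is a disjoint union of smooth manifolds of dimension at most $n$, so the classical Whitney embedding theorem together with a general position argument yields an embedding into $\R^N$ for $N \geq 2n + 1$, with no tubular compatibility needed because there are no deeper strata.

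For the inductive step, let $(X, \Sigma, \mathscr{T})$ have depth $d \geq 1$ and set $X^{\geq 1}$ to be the union of all strata of depth $\geq 1$. The submersion condition $(3)$ in the definition of an abstract stratification forces $\dim S_i < \dim S_j$ whenever $S_i < S_j$, so $\dim X^{\geq 1} \leq n - 1$ and $\dpt X^{\geq 1} = d - 1$. By Remark \ref{strat-pullback} the restriction of $\mathscr{T}$ gives $X^{\geq 1}$ the structure of an abstractly stratified space, and the inductive hypothesis produces a realization $\iota_0 : X^{\geq 1} \to \R^{2n-1} \subset \R^N$. To extend across a regular stratum $S$, observe that the frontier $\partial S \subseteq X^{\geq 1}$ comes equipped with the data $\{(\pi_{iS}, \rho_{iS})\}_{S_i < S}$, which by the control conditions assemble into a locally trivial stratified cone-bundle structure on a neighborhood $U_S$ of $\partial S$ in $\overline{S}$. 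Inductively realizing each link $\partial T_i(\varepsilon)$ (itself an abstractly stratified space of strictly smaller depth) and using a tubular neighborhood of $\iota_0(X^{\geq 1})$ inside $\R^N$, one sends $x \in U_S$ to $\iota_0(\pi_{iS}(x)) + \rho_{iS}(x)\cdot v(x)$ for an appropriate normal-direction link embedding $v$. Partitions of unity subordinate to the tubes of distinct deep strata glue these local formulas, while the control relations $\pi_{ij} \circ \pi_{jk} = \pi_{ik}$ and $\rho_{ij} \circ \pi_{jk} = \rho_{ik}$ enforce consistency. Outside $U_S$ the stratum $S$ is a smooth manifold with collared frontier, so the classical Whitney embedding theorem with boundary extends the map to all of $S$. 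General position with $N \geq 2n + 1$ supplies injectivity across strata, and the explicit cylindrical form near $\iota_0(X^{\geq 1})$ verifies Whitney $(a)$- and $(b)$-regularity as well as the equivalence of the induced tube system with $\mathscr{T}$.

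For the isotopy statement, apply the same inductive scheme to an interpolation between $\iota_0$ and $\iota_1$: first use the inductive hypothesis with $N \geq 2(n-1) + 2 = 2n$ to isotope $\iota_0|_{X^{\geq 1}}$ to $\iota_1|_{X^{\geq 1}}$ inside $\R^N$, then extend stratum-by-stratum using the same cylindrical-extension procedure relative to the boundary data at $t = 0, 1$. The relevant codimension count is that, for a generic stratified homotopy $\mathcal{I} : X \times [0,1] \to \R^N$, the double-point locus $\{(x, y, t) : x \neq y,\ \mathcal{I}(x, t) = \mathcal{I}(y, t)\}$ has expected codimension $N$ inside a space of dimension $2n + 1$, so a generic perturbation rel boundary eliminates all double points precisely when $N \geq 2n + 2$.

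The main obstacle, as is typical in stratified differential topology, will be the extension step across a stratum frontier: arranging the fiberwise cone-bundle embedding so that the image is automatically Whitney regular and the induced tube system on the image is equivalent, in the germ-sense of Definition \ref{tubes-eq} and Remark \ref{tubes-germs}, to the original $\mathscr{T}$. The $(b)$-regularity is the more delicate part: it demands that the radial scaling in the extension formula be chosen so that secant lines from deep strata to nearby regular-stratum points have the correct limits, and the gluing across different deep strata must be done coherently via a partition-of-unity construction that respects the $\rho$-control condition.
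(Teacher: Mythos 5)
The paper does not prove Theorem \ref{strat-natsume}; it is quoted from Teufel and Goresky (see also Natsume), so there is no internal argument to compare yours against. Your outline does follow the strategy of those sources --- induction on depth, realizing the singular part $X^{\geq 1}$ first, and extending over the depth-zero strata using the cone structure of the tubes --- so the architecture is the standard one. But as written it has genuine gaps at exactly the points where the cited proofs do their real work.

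First, the gluing step fails as stated. You propose to combine the local formulas $x \mapsto \iota_0(\pi_{iS}(x)) + \rho_{iS}(x)\, v(x)$ over different deep strata ``via partitions of unity.'' A partition-of-unity average of embeddings into $\R^N$ is not an embedding, and injectivity of the glued map is precisely what must be proved. Either the local formulas already agree on overlaps --- which requires constructing the link realizations $v$ compatibly with all of the projections $\pi_{ij}$, i.e.\ a simultaneous controlled realization of the entire system of links and tubes, not merely of $X^{\geq 1}$ --- or they do not, in which case interpolation destroys injectivity. That compatible system is the actual inductive object in Natsume's and Goresky's arguments, and your inductive hypothesis (a realization of $X^{\geq 1}$ alone) is too weak to supply it. Second, Whitney $(b)$-regularity of the image is essentially the content of Teufel's theorem and cannot be deferred to ``an appropriate choice of radial scaling'': the secant-line condition at points of $\partial S$ depends on the second-order behavior of the extension near the frontier, and this is where most of the technical effort in the literature is concentrated. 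Third, for the isotopy statement, the ``generic perturbation rel boundary'' must be performed within the class of controlled stratified maps (perturbations have to preserve the $\pi$- and $\rho$-control relations and remain realizations for each $t$), which is not the standard transversality theorem for smooth maps; your codimension count $N \geq 2n+2$ is the right one, but the perturbation lemma it invokes needs to be established in the stratified category before the count applies.
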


It was conjectured by Whitney that any complex analytic variety admits a stratification such that the  neighborhood of any point in a stratum is fibered by links of the stratum. This conjecture was formulated and proved by Thom and Mather using the theory of controlled vector fields, see \cite{Mather_notes}.

Before stating the theorem, we introduce the notion of a {\it cone} on a stratified space. Let $(X, \Sigma, \mathscr{T})$ be an abstractly stratified space. Then we define the open cone on $X$ to be the topological space $cX := X\times [0, 1)/X \times \{0\}$, and we denote $v \in cX$ to be the vertex of the cone, given by the image of $X \times \{0\}$. $cX$ admits a natural stratification given by $c\Sigma := \Sigma \times (0, 1) \sqcup \{v\}$ and a tube system $c\mathscr{T} := \{T_i \times (0, 1), \pi_i \times \mathrm{id}, \rho_i, i \in I\} \cup \{B_v, \pi_v, \rho_v\}$ where $B_v \subset X$ is a ball around $v$, $\pi_v : B_v \to \{v\}$ is the constant map, and $\rho_v : B_v \to [0, \infty)$ is, up to reparametrizations on the codomain, the natural height function $cX \to [0, 1)$ on the cone.

\begin{thm}\label{thom-isotopy} \cite[p. 41]{GM_SMT}
	Let $(X, \Sigma, \mathscr{T})$ be an abstractly stratified space. Let $x \in X$ be a point and $S \in \Sigma$ be the unique stratum of $X$ containing $x$ of dimension $n := \dim S$.

There exists an abstractly stratified space $(L, \Sigma_\ell, \mathscr{T}_\ell)$ called the {\bf link} of $S$ such that for a sufficiently small neighborhood $U \subset X$ of $x$ equipped with the induced stratification from the canonical inclusion, $U$ is isomorphic to $\mathbb R^n \times cL$.\end{thm}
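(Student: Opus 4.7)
The plan is to follow the classical Thom--Mather strategy, constructing an explicit stratified isomorphism $\mathbb{R}^n \times cL \to U$ via flows of controlled vector fields. First I define the link intrinsically: given the stratum $S$ containing $x$ with tubular projection $\pi_S : T_S \to S$ and radial function $\rho_S$, set
\begin{equation*}
L := \pi_S^{-1}(x) \cap \{\rho_S = \varepsilon_0\}
\end{equation*}
for a sufficiently small $\varepsilon_0 > 0$. The pullback construction of Remark \ref{strat-pullback}, applied to the canonical inclusion $L \hookrightarrow X$, equips $L$ with an induced abstract stratification $(L, \Sigma_\ell, \mathscr{T}_\ell)$.

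Next I fix a smooth chart $\phi : \mathbb{R}^n \to U_0 \subset S$ centered at $x$ using the $\mathcal{C}^\infty$-structure on $S$, and let $v_1, \ldots, v_n$ be the coordinate vector fields pushed forward to $U_0$. The key technical step is to lift each $v_i$ to a \emph{controlled stratified vector field} $\tilde{v}_i$ on a neighborhood of $x$ in $X$: one which is tangent to every stratum, satisfies $d\pi_S(\tilde{v}_i) = v_i$ and $\tilde{v}_i(\rho_S) = 0$ on $T_S$, and more generally is intertwined with $(\pi_{S'}, \rho_{S'})$ for every deeper stratum $S'$ whose tubular neighborhood meets ours. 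This lift is built by induction on $\dpt(S')$, extending stratum by stratum via the local partial cylindrical coordinates $(\pi_{S'}, \rho_{S'})$ and a smooth partition-of-unity gluing; the $\pi$- and $\rho$-control axioms of an abstractly stratified space are precisely what guarantee the local extensions can be patched coherently.

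Once the $\tilde{v}_i$ are in hand, write $\Phi_i^{t_i}$ for the time-$t_i$ flow of $\tilde{v}_i$. These flows commute on a small neighborhood of $x$ (since their images under $d\pi_S$ are the commuting coordinate vector fields $v_i$) and they preserve each $\rho_{S'}$ while intertwining with each $\pi_{S'}$. One then defines
\begin{equation*}
\Psi : \mathbb{R}^n \times cL \to U, \qquad \Psi(t, [\ell, s]) := \bigl(\Phi_1^{t_1} \circ \cdots \circ \Phi_n^{t_n}\bigr)\bigl(\sigma_s(\ell)\bigr),
\end{equation*}
where $\sigma_s : L \to \pi_S^{-1}(x) \cap \{\rho_S = s\varepsilon_0\}$ is the radial rescaling obtained by flowing along an auxiliary controlled ``Euler-type'' vector field transverse to the level sets of $\rho_S$ (itself produced by a similar inductive construction). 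The inverse sends $y \in U$ to $(\phi^{-1}(\pi_S(y)), [\text{angular component}, \rho_S(y)/\varepsilon_0])$, where the angular component is obtained by reversing the flow back to the slice $\pi_S^{-1}(x)$ and rescaling to $\rho_S = \varepsilon_0$. The control conditions ensure $\Psi$ and $\Psi^{-1}$ preserve strata, tubular projections, and radial functions, so they furnish an isomorphism of abstract stratifications in the sense of Remark \ref{strat-pullback}.

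The main obstacle is the construction of the controlled lifts $\tilde{v}_i$ in the inductive step: naive extensions from a stratum to its closure fail to preserve $\pi$- and $\rho$-compatibility with the tubular data of nearby deeper strata, and the correct argument requires carefully arranged local trivializations together with smooth gluing that respects the entire nested collection of control conditions simultaneously. This is the technical heart of the argument; granted it, everything else follows formally from the functorial properties of pullback stratifications and flow-box type reasoning on each stratum.
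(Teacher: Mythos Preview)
The paper does not prove this theorem; it is quoted with a citation to \cite[p.~41]{GM_SMT} and used as background. Your outline is the classical Thom--Mather argument via controlled vector fields, which is indeed how the result is established in the literature, so you are reproducing the standard proof rather than offering an alternative.

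One point needs correction. You assert that the flows $\Phi_i^{t_i}$ commute ``since their images under $d\pi_S$ are the commuting coordinate vector fields $v_i$.'' This inference is wrong: the relation $d\pi_S([\tilde v_i,\tilde v_j])=[v_i,v_j]=0$ only forces the commutator $[\tilde v_i,\tilde v_j]$ to be $\pi_S$-vertical, not to vanish, so the lifted flows need not commute. Fortunately the construction does not require commutativity. Fixing an order in the composition $\Phi_1^{t_1}\circ\cdots\circ\Phi_n^{t_n}$ already yields a well-defined $\Psi$, and the inverse is obtained by first reading off $t=\phi^{-1}(\pi_S(y))$ (which works because each $\Phi_i$ covers the flow of $v_i$ under $\pi_S$, and the $v_i$ \emph{do} commute on $S$) and then applying the flows in the reverse order $\Phi_n^{-t_n}\circ\cdots\circ\Phi_1^{-t_1}$. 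So the gap is in the justification you gave, not in the construction itself; simply drop the commutativity claim and fix the ordering.
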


\subsection{Compression lemma}\label{sec-compression}

In this subsection  we describe a technical construction that we shall often use to enforce control on uncontrolled geometric data on abstractly stratified spaces.
Consider the following basic motivating example. Let $(M, \partial M)$ be a manifold with boundary and $T := \partial M \times [0, \infty)$ be a collar neighborhood of $\partial M$ inside $M$, with collar projection $\pi : T \to \partial M$. Consider a smooth function $\psi : [0, \infty) \to [0, 1]$ such that $\psi(t) = 0$ if $t \leq 1/4$, $\psi(t) = 1$ if $t \geq 3/4$, and $\psi$ is strictly increasing on $[1/2, 3/4]$. Then the map $\Pi : (M, \partial M) \to (M, \partial M)$, defined by $\Pi \equiv 1$ on $M \setminus T(3/4)$ and $\Pi(x, t) = (x, \psi(t) t)$ on $T(3/4)$ ``compresses" the smaller collar $T(1/4)$ to the boundary $\partial M$ while leaving everything outside a larger collar $T(3/4)$ fixed. Observe $\Pi|T(1/4) \equiv \pi$ is the collar projection. \\

We shall generalize the above construction to any abstractly stratified space $(X, \Sigma, \mathscr{T})$ to obtain a {\it compression map} $\Pi : X \to X$ which will ``compress" tubular neighborhoods of each stratum down to the stratum itself while leaving everything outside a larger tubular neighborhood fixed pointwise. Moreover, this shall appear as the time-1 map of a homotopy $\mathcal{H} : X \times I \to X$ which progressively shrinks these tubular neighborhoods to the strata. To carry this out, we shall require the {\it family of lines} machinery developed by Goresky \cite{Goresky_lines}.

\begin{thm}\label{strat-fol}\cite{Goresky_lines} Let $(X, \Sigma, \mathscr{T})$ be an abstractly stratified space. Then there exists $\delta > 0$ and a family of retractions $r_i(\varepsilon) : T_i \setminus S_i \to \partial T_i(\varepsilon)$ for every $0 < \varepsilon < \delta$ which satisfy the following  relations:
\begin{enumerate}
\item $r_i(\varepsilon') \circ r_j(\varepsilon) = r_j(\varepsilon') \circ r_i(\varepsilon)$ for all $i \leq j \in I$, $0 < \varepsilon' < \delta$.
\item  $\rho_i \circ r_j(\varepsilon) = \rho_i$ and $\rho_j \circ r_i(\varepsilon) = \rho_j$ for all $i \leq j$, $i, j \in I$.
\item $\pi_i \circ r_j(\varepsilon) = \pi_i$ for all $i \leq j$, $i, j \in I$.
\item $r_i(\varepsilon') \circ r_i(\varepsilon) = r_i(\varepsilon')$ for all $i \in I$, $0 < \varepsilon < \varepsilon' < \delta$.
\item $r_i(\varepsilon)|T_i(\varepsilon) \cap S_j : T_i(\varepsilon) \cap S_j \to \partial T_i(\varepsilon) \cap S_j$ is smooth for all $i \leq j$, $i, j \in I$.
\end{enumerate}
\end{thm}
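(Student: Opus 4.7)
My plan is to realize each $r_i(\varepsilon)$ as a time-parametrized flow of a ``radial'' stratified vector field $\xi_i$ on $T_i \setminus S_i$. Specifically, I would seek a smooth stratified vector field $\xi_i$, tangent to every stratum meeting $T_i$, satisfying the normalization $\xi_i(\rho_i) = \rho_i$ so that its flow $\Phi_i^t$ multiplies $\rho_i$ by $e^t$. I would also arrange the control compatibilities $(\pi_j)_* \xi_i = 0$ and $\xi_i(\rho_j) = 0$ on $T_i \cap T_j$ whenever $j < i$. Granting this, the formula $r_i(\varepsilon)(x) := \Phi_i^{\log(\varepsilon/\rho_i(x))}(x)$ sends each point of $T_i \setminus S_i$ to the level hypersurface $\{\rho_i = \varepsilon\} = \partial T_i(\varepsilon)$.

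The construction of the $\xi_i$ proceeds by induction on depth. At a deepest stratum $S_i$, the local conical structure $\mathbb{R}^n \times cL$ provided by Theorem \ref{thom-isotopy} furnishes an Euler vector field $t\,\partial_t$ in the cone direction, which is manifestly tangent to every stratum passing through $T_i$ and scales $\rho_i$ at the required rate. A locally finite partition of unity on $S_i$ subordinate to such cone charts patches these local models into a global $\xi_i$ on $T_i \setminus S_i$. Inductively, assuming $\xi_j$ has been constructed for every $S_j$ deeper than $S_i$, I would build $\xi_i$ on $T_i \setminus S_i$ by starting from the local cone Euler field and then correcting it along the fibers of $\pi_j$, using the $\pi$- and $\rho$-control axioms to enforce $(\pi_j)_* \xi_i = 0$ and $\xi_i(\rho_j) = 0$ throughout $T_i \cap T_j$. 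This is the standard stratified controlled vector field machinery of \cite{Mather_notes}, analogous to the construction underlying the first Thom--Mather isotopy theorem.

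Properties (2), (3), (4), (5) of the theorem then follow essentially formally: (5) is smoothness of flows in initial conditions, (4) is the semigroup property $\Phi_i^{s+t} = \Phi_i^s \circ \Phi_i^t$, and (2), (3) integrate the pointwise identities $\xi_j(\rho_i) = 0$ and $(\pi_i)_* \xi_j = 0$ imposed by the control conditions. The main obstacle I expect is the commutation relation (1), which I would reduce to the Lie bracket identity $[\xi_i, \xi_j] = 0$ on $T_i \cap T_j$: using $(\pi_j)_* \xi_i = 0$, $\xi_i(\rho_j) = 0$, and the symmetric facts for $\xi_j$, one shows that in coordinates adapted to the cone-bundle structure of $T_j$ near $S_j$, the two vector fields act on independent factors, so their commutator vanishes; integrating yields commuting flows and hence (1) after unpacking the definitions of $r_i$ and $r_j$. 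The delicate point, and the genuine content of the theorem, is that the inductive step can in fact be arranged so that all of these projectability and $\rho$-preservation conditions hold simultaneously across every pair of incident strata; it is here that the full strength of the Thom--Mather control axioms (rather than mere Whitney regularity) is essential.
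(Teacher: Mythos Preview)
The paper does not prove this statement; Theorem~\ref{strat-fol} is quoted from Goresky \cite{Goresky_lines} and used as a black box, so there is no proof in the paper to compare your proposal against.

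That said, your outline is essentially Goresky's own strategy: one constructs commuting controlled ``radial'' vector fields $\xi_i$ on the punctured tubes via the Thom--Mather controlled vector field machinery and integrates them to obtain the retractions $r_i(\varepsilon)$. You have correctly identified both the mechanism (flow of a stratified Euler-type field normalized by $\xi_i(\rho_i)=\rho_i$) and the genuine crux (arranging, in the inductive step, that \emph{all} the projectability and $\rho$-preservation conditions hold simultaneously across every incident pair of strata, so that the flows commute). One small point to watch: your list of imposed compatibilities mentions $(\pi_j)_*\xi_i=0$ and $\xi_i(\rho_j)=0$ only for $j<i$, but property~(2) also demands $\rho_j\circ r_i(\varepsilon)=\rho_j$ for $i<j$, i.e.\ $\xi_i(\rho_j)=0$ for \emph{shallower} $j$ as well; Goresky handles this by a more careful inductive construction, and your final paragraph acknowledges that this simultaneous compatibility is where the real content lies.
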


Equipped with such a family of retractions, we can define a stratum-preserving homeomorphism $h_i : T_i \setminus S_i \to \partial T_i(\varepsilon) \times (0, \infty)$ given by $h_i(p) = (r_i(\varepsilon)(p), \rho_i(p))$. This homeomorphism extends to a stratum-preserving homeomorphism $h_i : T_i \to M(\pi_i)$. Here $M(\pi_i) := (\partial T_i(\varepsilon) \times [0, \infty) \sqcup S_i)/(x, 0) \sim \pi_i(x)$ is the semi-infinite mapping cylinder of $\pi_i|S_i(\varepsilon) : \partial T_i(\varepsilon) \to S_i$. \\

Let $(X, \Sigma, \mathscr{T})$ be a finite-dimensional abstractly stratified space. Choose a smooth function $\psi : [0, \infty) \to [0, 1]$ as before such that $\psi(x) = 0$ for all $x \leq \varepsilon$, $\psi(x) = 1$ for all $x \geq 3\varepsilon$ and $\psi$ is strictly increasing on $[\varepsilon, 3\varepsilon]$. Define
\begin{gather*}\phi_i : \partial T_i(\varepsilon) \times [0, \infty) \times [0, 1] \to \partial T_i(\varepsilon) \times [0, \infty) \\ 
\phi_i((x, t), s) := (x, s \psi(t)t + (1- s) t)\end{gather*}
$\phi_i$ simply performs the compression trick described earlier for a manifold-with-boundary but towards the end $\partial T_i(\varepsilon) \times \{0\}$ of the cylinder, and we keep track of the compression procedure through time, measured by the $s$-coordinate; at time $s = 0$ the compression process is performed to its completion. Note that this descends to a homotopy $\phi_i : M(\pi_i) \times I \to M(\pi_i)$ on the mapping cylinders. This is the map which we shall use to compress near a single stratum $S_i \in \Sigma$. 

Let $\mathcal{H}_i : X \times I \to X$ be defined, for all $s \in I$, by
$$\mathcal{H}_i(-, s) = \begin{cases}\mathrm{id} & \text{throughout}\; X \setminus T_i \\  h_i^{-1} \circ \phi_i(-, s) \circ h_i & \text{in}\; T_i(\varepsilon)\end{cases}$$ 

The collection of these ``local compression" homotopies $\{\mathcal{H}_i(-, s) : i \in I_0\}$ commute for each time $s \in I$ due to the choice of our family of retractions from Theorem \ref{strat-fol}. Therefore, we can compose these maps in \textbf{any order} to obtain a map $\mathcal{H} : X \times I \to X$ which performs all of these local compressions near every strata at once. This produces the desired compression homotopy $\mathcal{H} : X \times I \to X$.

Observe that $\phi_i(-, 1) = \pi_i$ on $\partial T_i(\varepsilon) \times [0, \varepsilon]$, so $\mathcal{H}_i(-, 1) = \pi_i$ on $T_i(\varepsilon)$ and $H_i(-, 1) \equiv \mathrm{id}$ on $X \setminus T_i(\varepsilon)$ as before. The final map $\Pi := \mathcal{H}(-, 1)$ is a composition of the family $\{\mathcal{H}_i(-, 1) : i \in I\}$ in any order.

To clarify we exhibit the behavior of a factor of $\Pi$ given by the composition of these maps in increasing order of depth along a maximal chain in $\Sigma$. Let $S_1 < S_2 < \cdots < S_n$ be such a chain. Let us denote 
$$\varphi_{p,q} := \mathcal{H}_{p}(-, 1) \circ \cdots \circ \mathcal{H}_{q}(-, 1), \;\;\; 1 \leq p < q \leq n$$
Fix any $x \in X$, let $d = \min\{k : x \in T_{k}(\varepsilon)\}$ be defined so that $S_d$ is the deepest stratum to which $x$ is $\varepsilon$-close. We claim $\varphi_{1, n}(x) = \varphi_{1, d-1}(\pi_{d, n}(x))$. This follows from induction using the observation that if $x \in T_{k, n}(\varepsilon)$ and $k < \ell$, $\rho_{k, \ell}(\pi_{\ell, n}(x)) = \rho_{k, n}(x) \leq \varepsilon$, implying $\pi_{\ell, n}(x) \in T_{k, \ell}(\varepsilon)$.
This establishes that for any $x \in X$, if $D \in \Sigma$ denotes the deepest stratum of $X$ such that $x$ is $\varepsilon$-close to $D$, $\Pi(x) = \Pi(\pi_D(x))$, since $\Pi|\overline{D}$ is itself a compression map on $(\overline{D}, \Sigma \cap \overline{D}, \mathscr{T} \cap \overline{D})$ obtained from composition of the maps in the family $\{\mathcal{H}_i(-, s) : i \in \Sigma \cap \overline{D}\}$. 

Finally, we discuss smoothness properties of $\Pi$. Let $S<L$ and let $A$ denote the link of $S$ in $X$. Let $A_L = A \cap L$. Then the tubular neighborhood of $S$ in $L$ is of 
the form $U \times cA_L$, where $cA_L$ denotes the cone on $A_L$, and $U\subset S$ is an open Euclidean chart. 
This follows from Thom's first isotopy lemma (see \cite[Section 2]{ms-hprin}, \cite[Lemma 2.15]{ms-hprin} for further details). 
Consider local coordinates on $U \times cA_L$ of the form
$\R^s \times \R^l \times [0,1]$, where $\R^s \subset S$ is an open chart in $S$,  $\R^l \subset A_L$ is an open chart in $L$, and $[0,1]$ is the radial (cone) direction in $cA$. We can choose $\Pi$ so that it can be given in local charts as 
$$\Pi (x,y,t) = (x,y, \psi(t)),$$
where $\psi :[0,1] \to [0,1]$ is a smooth function satisfying the following:
\begin{enumerate}
\item in local chart coordinates, $0$ is sent to the cone-point $c$,
\item there exists $0<c<1$ such that $\psi ([0,c]) =\{0\}$,
\item $\psi$ maps $(c,1)$ diffeomorphically to $(0,1)$.
\end{enumerate}
If $\Pi$ satisfies the above properties locally, we say henceforth that it is
\emph{smooth}. This implies, in particular, that if $(X, \Sigma, \mathscr{T})$ is an abstractly stratified space where the underlying space $X$ is a \emph{smooth manifold}, then $\Pi$ is a smooth map. 

We summarize the properties of the construction above in the following proposition.

\begin{prop}\label{strat-compress}For an abstractly stratified space $(X, \Sigma, \mathscr{T})$, there exists a map $\Pi : X \to X$, a homotopy $\mathcal{H} : X \times [0, 1] \to X$ and $\varepsilon > 0$, such that
\begin{enumerate}
\item $\mathcal{H}(-, 1) = \Pi$
\item $\mathcal{H}(-, 0) = \mathrm{id}$
\item $\mathcal{H}(-, s) : X \to X$ is an isomorphism for all $s \in [0, 1)$
\item For any pair of strata $S, L \in\Sigma$ with $S < L$, $\Pi \equiv \mathrm{id}$ throughout $L \setminus T_{S, L}(3\varepsilon)$
\item For any $x \in X$, $\Pi(x) = \Pi(\pi_D(x))$ where $D$ is the deepest stratum such that $T_D(\varepsilon)$ contains $x$.
\item $\Pi$ is smooth. 
\end{enumerate}
\end{prop}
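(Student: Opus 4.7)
The plan is to make the informal construction sketched before the statement rigorous and then read off each of the six conclusions from it. First I would invoke Theorem \ref{strat-fol} to fix $\delta > 0$ and a Goresky family of retractions $r_i(\varepsilon) : T_i \setminus S_i \to \partial T_i(\varepsilon)$ for $0 < \varepsilon < \delta$, choosing $\varepsilon$ small enough that $3\varepsilon < \delta$ and that the local product model $\R^s \times \R^\ell \times [0,1]$ from Thom's first isotopy lemma is available near every point. Properties (2)--(4) of the family ensure that $h_i(p) := (r_i(\varepsilon)(p), \rho_i(p))$ extends to a stratum-preserving homeomorphism $h_i : T_i \to M(\pi_i)$ compatible with the controlled data. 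I would then fix a smooth cutoff $\psi : [0, \infty) \to [0, 1]$ with $\psi|_{[0, \varepsilon]} \equiv 0$, $\psi|_{[3\varepsilon, \infty)} \equiv 1$, and $\psi$ strictly increasing on $[\varepsilon, 3\varepsilon]$, define $\phi_i((x, t), s) := (x, (1-s)t + s\psi(t)t)$, and pull back through $h_i$ (extending by the identity off $T_i$) to obtain the local compression homotopy $\mathcal{H}_i : X \times I \to X$.

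The central combinatorial step is to verify that for each $s \in I$ the maps $\{\mathcal{H}_i(-, s) : i \in I\}$ pairwise commute; this is where relations (1)--(3) of Theorem \ref{strat-fol} are decisive, since they guarantee that the radial and angular pieces of the two mapping-cylinder coordinates $h_i$ and $h_j$ pass through each other unchanged. Local finiteness of $\Sigma$ then allows me to define $\mathcal{H}(-, s)$ as the (order-independent) composition of the finitely many nontrivial $\mathcal{H}_i(-, s)$, and to set $\Pi := \mathcal{H}(-, 1)$. Conclusions (1) and (2) are immediate from the formula for $\phi_i$. For (3), whenever $s < 1$ the map $\phi_i(-, s)$ is a stratum-preserving self-homeomorphism of $M(\pi_i)$ intertwining the controlled data, so by Remark \ref{strat-pullback} each $\mathcal{H}_i(-, s)$ is an isomorphism of abstract stratifications, and a commuting composition of isomorphisms remains an isomorphism. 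Conclusion (4) follows because $\psi \equiv 1$ on $[3\varepsilon, \infty)$ forces $\mathcal{H}_i(-, 1) \equiv \mathrm{id}$ outside $T_i(3\varepsilon)$, while any other $\mathcal{H}_j$ that acts nontrivially on a point of $L$ must do so within $T_{j, L}$ for $S_j < L$ and hence cannot disturb points of $L \setminus T_{S,L}(3\varepsilon)$ lying outside the relevant $T_j$.

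Conclusion (5) is the inductive identity indicated in the discussion: for $x \in X$ and a maximal chain $S_1 < \cdots < S_n$ of strata meeting the local tubes around $x$, the $\rho$-control condition $\rho_{k, \ell} \circ \pi_{\ell, n} = \rho_{k, n}$ implies that if $x$ is $\varepsilon$-close to $S_k$ then so is $\pi_{\ell, n}(x)$ for every intermediate $k < \ell \le n$; induction on depth then yields $\Pi(x) = \Pi(\pi_D(x))$, where $D$ is the deepest stratum with $x \in T_D(\varepsilon)$. Finally, for (6), the smoothness of $\psi$ together with the local product model permits writing $\Pi$ in charts as $(x, y, t) \mapsto (x, y, \psi(t))$, verifying the three smoothness bullet points stated just before the proposition. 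The hard part I anticipate is the commutativity step: one has to check that under the two identifications $h_i$ and $h_j$ the $s$-time compressions in the different mapping-cylinder directions genuinely commute on the overlap $T_i \cap T_j$, and this requires combining the $\pi$- and $\rho$-control conditions together with all five relations of Theorem \ref{strat-fol} simultaneously.
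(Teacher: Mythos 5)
Your proposal follows essentially the same route as the paper: Goresky's family of lines to get the mapping-cylinder coordinates $h_i$, the cutoff $\psi$ and the time-parametrized compression $\phi_i$, commutativity of the local homotopies $\mathcal{H}_i(-,s)$ to assemble $\mathcal{H}$, the $\rho$-control induction for conclusion (5), and the Thom isotopy local product model for smoothness. The construction and the way each conclusion is read off match the paper's argument, so there is nothing substantive to add.
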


\begin{cor}\label{cor-cprsctrl}For any stratum $S \in \Sigma$, $\Pi \circ \pi_S = \Pi$ on $T_S(\varepsilon)$.\end{cor}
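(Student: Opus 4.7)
The plan is to exploit the commutativity of the family $\{\psi_i : i \in I\}$ of local compression maps underlying $\Pi$, where $\psi_i := \mathcal{H}_i(-,1)$, together with the explicit form of $\psi_S$ on and near the stratum $S$.

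First, I would reorder the composition defining $\Pi$ so that $\psi_S$ appears on the right, writing $\Pi = \Pi_0 \circ \psi_S$, where $\Pi_0$ denotes the composition of the remaining factors $\psi_i$, $i \neq S$, in some admissible order. This rearrangement is justified because, as noted in the construction, the family $\{\mathcal{H}_i(-,s) : i \in I\}$ commutes for every $s \in I$ --- a consequence of the compatibility relations among the retractions $r_i(\varepsilon)$ of Theorem \ref{strat-fol}.

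Next, I would extract from the definition of $\phi_S$ two explicit properties of $\psi_S$. On $T_S(\varepsilon)$, the cutoff $\psi$ in the construction vanishes identically on $[0,\varepsilon]$; hence $\phi_S(-,1)$ collapses the inner cylinder $\partial T_S(\varepsilon) \times [0,\varepsilon]$ to the zero-section $S$, yielding $\psi_S = \pi_S$ on $T_S(\varepsilon)$. In particular, since $\pi_S|_S = \mathrm{id}_S$, we obtain $\psi_S|_S = \mathrm{id}_S$, which can be rephrased as $\psi_S \circ \pi_S = \pi_S$ on $T_S(\varepsilon)$.

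Combining these observations for $x \in T_S(\varepsilon)$, I would compute
\begin{align*}
\Pi(x) &= \Pi_0(\psi_S(x)) = \Pi_0(\pi_S(x)), \\
\Pi(\pi_S(x)) &= \Pi_0(\psi_S(\pi_S(x))) = \Pi_0(\pi_S(x)),
\end{align*}
yielding $\Pi \circ \pi_S = \Pi$ on $T_S(\varepsilon)$. The only subtle point is the commutativity used in the reordering step, but that is already built into the construction of $\Pi$, so I do not anticipate any serious obstacle. (Alternatively, one can deduce the corollary from property (5) of Proposition \ref{strat-compress} together with the $\pi$- and $\rho$-control conditions, by matching the deepest stratum for $x$ with that for $\pi_S(x)$; but the commutativity argument above seems more direct.)
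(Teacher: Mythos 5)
Your argument is correct, and it takes a genuinely different route from the paper's. In fact, the paper's proof is precisely the alternative you relegate to your final parenthesis: fix $x \in T_S(\varepsilon)$, let $D$ be the deepest stratum with $x \in T_D(\varepsilon)$, use the $\rho$-control condition to show that $D$ is also the deepest such stratum for $\pi_S(x)$, and then combine property (5) of Proposition \ref{strat-compress} with the $\pi$-control condition to get $\Pi(\pi_S(x)) = \Pi(\pi_D(\pi_S(x))) = \Pi(\pi_D(x)) = \Pi(x)$. Your main argument instead re-opens the construction of $\Pi$ from Section \ref{sec-compression}: it uses the asserted commutativity of the local compressions to place $\psi_S := \mathcal{H}_S(-,1)$ in the rightmost slot of the composition, and then the explicit identities $\psi_S = \pi_S$ on $T_S(\varepsilon)$ and $\psi_S|_S = \mathrm{id}_S$, both of which are recorded in the construction. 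What your approach buys is directness --- a two-line computation once the factorization $\Pi = \Pi_0 \circ \psi_S$ is granted. What the paper's approach buys is independence from the internal details of the construction: it invokes only the summarized interface of Proposition \ref{strat-compress} together with the control conditions of the tube system, so it applies verbatim to any map with those properties and does not lean on the commutativity of the family $\{\mathcal{H}_i(-,s)\}$, which the paper asserts but does not verify in detail. Both arguments are valid.
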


\begin{proof}Indeed, let $x \in T_S(\varepsilon)$, then $\Pi_1(x) = \pi_D(x)$ where $D < S$ is the deepest stratum such that $x \in T_D(\varepsilon)$. Note that $\rho_D(\pi_S(x)) = \rho_D(x) \leq \varepsilon$, therefore $\pi_S(x) \in T_D(\varepsilon)$ as well. If there is a deeper stratum $D' < S$ such that $\pi_S(x) \in T_{D'}(\varepsilon)$, then $\rho_{D'}(x) = \rho_{D'}(\pi_S(x)) \leq \varepsilon$, therefore $x \in T_{D'}(\varepsilon)$ for a stratum $D' < S$ with $\dpt(D') < \dpt(D)$, contradicting the choice of $D$. Therefore, 
$$(\Pi \circ \pi_D)(x) = \Pi(\pi_D(x)) = \pi_D(\pi_S(x)) = \pi_D(x) = \Pi(x)$$ 
as required.\end{proof}

\begin{rmk}\label{rmk-compskel} Note that the map $\Pi$ stands out as unusual in the theory of stratified spaces so far since it does not preserve strata; indeed, it pushes parts of each stratum down to its boundary. We remark however that $\Pi$ is {\bf skeleton-preserving} in the sense that $\Pi(\overline{S}) \subset \overline{S}$ for any stratum $S \subset \Sigma$. Moreover, $\Pi|_{\overline{S}}$ itself is a compression map, therefore we can effectively understand $\Pi$ by induction, using compression maps on stratified spaces of lower depth than that of $X$. \end{rmk}

\section{de Rham cohomology on stratified spaces}\label{sec-stratdr}

Given any abstractly stratified space $(X, \Sigma, \mathscr{T})$, the ``coarsest" notion of a differential $k$-form on $X$ is a collection of $k$-forms $\{\omega_S : S \in \Sigma\}$ defined on each stratum. This defines a complex $\bigoplus_{S \in \Sigma} \Omega^\bullet(S)$ given by the direct sum of the de Rham complexes of each stratum. However, the cochain groups are too large for our purposes, as we have so far not demanded any condition on how the forms on various strata should fit together; the complex is independent of the chosen tube system $\mathscr{T}$. For example, the cohomology groups are $\bigoplus_{S \in \Sigma} H^\bullet(S; \Bbb R)$. This does not reflect the topology of $X$.
We propose two ways to remedy this:

\begin{enumerate}

\item By equipping $X$ with a realization $X \hookrightarrow M$ and using the pullback of the sheaf of smooth differential forms on $M$ along this embedding. This gives us a way of saying that a collection of forms $\{\omega_S : S \in \Sigma\}$  ``fit together", i.e.\ there is a germ of a smooth form $\widetilde{\omega}$ along the image of $X$ in $M$ such that $\widetilde{\omega}|_S = \omega_S$ for every stratum $S \in \Sigma$.

\item By defining an appropriate subcomplex of $\bigoplus_{S \in \Sigma} \Omega^\bullet(S)$ which consists of collections $\{\omega_S : S \in \Sigma\}$ such that whenever $S < L$, $\omega_L$ is ``radially compactly supported" while approaching $S$, or more precisely, $\pi_{S, L}^* \omega_S = \omega_L$. This notion of ``fitting together" is more restrictive than the first notion, but is completely intrinsic to the stratified space $X$. This approach was taken by Verona in \cite{Verona_derham}.

\end{enumerate}

We discuss and compare these two notions in this section.

\subsection{Smooth de Rham cohomology}

Let $(W, \Sigma) \subset M$ be a Whitney stratified subset of a smooth manifold $M$. The smooth structure on $M$ gives a natural way to speak of a smooth structure on $W$. To be precise, let $\mathcal{C}^\infty_M$ be the sheaf of smooth functions on $M$. We can restrict this sheaf to the subspace $\iota : W \hookrightarrow M$ to obtain the {\it sheaf of smooth functions} on $W$, which we shall denote as $\mathcal{C}^\infty_W := \iota^* \mathcal{C}^\infty_M$. This can be concisely described as the sheaf of germs of smooth functions defined along $W$. Note that $\mathcal{C}^\infty_M$ is fine by the existence of bump functions on smooth manifolds, and the fact that restriction of fine sheaves over metric space to closed subspaces remains fine. Hence $\mathcal{C}^\infty_W$ is a fine sheaf on $W$. We shall drop the subscript when the ambient stratified space is understood and simply denote the sheaf as $\mathcal{C}^\infty$.

Let $\Omega^\bullet_M$ denote the sheaf of differential forms on $M$. We can likewise define $\Omega^p_W := \iota^*\Omega^p_M$ to be the {\it sheaf of smooth differential $p$-forms} on $W$, and these are likewise fine sheaves on $W$. We once again drop the subscript whenever the ambient stratified space is understood and denote the sheaf simply as $\Omega^\bullet$. The exterior derivative operator $d : \Omega^p_M \to \Omega^{p+1}_M$ restricts to an exterior derivative operator on  $\Omega^\bullet$. In particular, it gives rise to a cochain complex $(\Omega^\bullet(W), d)$ at the level of global sections. We shall refer to this as the {\it smooth de Rham complex} for $W$. The cohomology of the complex shall be referred to as the {\it smooth de Rham cohomology} of $W$, $H^\bullet_{dR}(W) := H^\bullet(\Omega^\bullet(W), d)$.

\begin{rmk}\label{rmk-standingnbhd}
Here, $\Omega^p_W$ is  a sheaf of smooth forms obtained by restricting smooth p-forms
from a small neighborhood in $M$. 
	Thus $\Omega^\bullet$ is really the space of smooth forms in a small tubular neighborhood. Since the stratified space is a deformation retract, the cohomologies are preserved by homotopy-invariance. The de Rham cohomology theory we thus obtain is independent of the embedding by Verona's Theorem \ref{thm-veronadr} below. Thus,  for the purposes of this paper, de Rham cohomology is realized by an actual embedding and taking a {\bf fixed} sufficiently
small neighborhood in a smooth manifold. We shall implicitly assume such an embedding throughout.
\end{rmk}

Recall that the de Rham complex of sheaves,
$$0 \to \underline{\mathbb R} \to \Omega^1_M \stackrel{d}{\to} \Omega^2_M \stackrel{d}{\to} \cdots$$
defines a resolution of the constant $\mathbb R$-valued sheaf on $M$. Therefore, so does
$$0 \to \underline{\mathbb R} \to \Omega^1 \stackrel{d}{\to} \Omega^2 \stackrel{d}{\to} \cdots$$
Indeed, it suffices to check this at the level of stalks, and the restriction sheaves on $W$ are indistinguishable from the corresponding sheaves on $M$ at the level of stalks. Moreover, from our comments above this is a resolution by fine --- thus acyclic --- sheaves. Therefore, the sheaf cohomology of $\underline{\mathbb R}$ is isomorphic to the de Rham cohomology of $W$. Moreover, the sheaf cohomology of $\underline{\mathbb R}$ is isomorphic to the singular cohomology of $W$ valued in $\mathbb R$, as $W$ is locally contractible. As a consequence, 
$$H^\bullet_{dR}(W) \cong H^\bullet_{sing}(W; \mathbb R)$$

Next, we would like to use the cohomology theory defined above for abstractly stratified spaces. To do so, we introduce the following notion: 

\begin{defn}\label{strat-smooth} A {\bf smooth structure} on an abstractly stratified space is a choice of a realization (see Theorem \ref{strat-natsume}) in a smooth manifold. \end{defn}

The rationale behind this definition is as follows. Suppose $(X, \Sigma, \mathscr{T})$ is an abstractly stratified space, equipped with a smooth structure, i.e., a realization $j : X \to M$ in some smooth manifold $M$. Then the image $W := \iota(X) \subset M$ is a Whitney stratified set, and therefore admits the sheaf of smooth functions $\mathcal{C}^\infty_W$ defined earlier; we can pullback the sheaf to obtain a sheaf $\mathcal{C}^\infty_X := j^* \mathcal{C}^\infty_W$ on $X$, which we call the {\it sheaf of smooth functions} on $X$. This notion depends on the choice of the realization in a crucial way, as the smooth isotopy extension theorem fails for isotopies of embeddings of stratified spaces. 

We shall similarly define $\Omega_X^\bullet := j^* \Omega_W^\bullet$ as the {\it sheaf of smooth differential forms} on $X$, and drop the subscript from both notations whenever the ambient stratified space along with the smooth structure $(X, j)$ is understood. The discussion above yields a fine resolution $0 \to \underline{\mathbb R} \to \Omega^\bullet$, and hence a cochain complex $(\Omega^\bullet(X), d)$ and a cohomology theory $H^\bullet_{dR}(X)$ isomorphic to the real-valued singular cohomology of $X$, for the abstractly stratified space $X$.

\subsection{Controlled de Rham cohomology}

Let $(X, \Sigma, \mathscr{T})$ be an abstractly stratified space. A continuous function $f : X \to \mathbb R$ will be said to be \textit{controlled} if for every stratum $S \in \Sigma$, $f|S$ is a smooth function, and for any pair of indices $i, j \in I$, $i < j$, $f \circ \pi_{ij} = f$ throughout $T_{ij}$. A useful way of thinking of this condition is by the slogan ``controlled functions are constant on links".

For any open subset $U \subseteq X$,  we obtain an induced abstract stratification on $U$ by pulling back the abstract stratification on $X$ along the inclusion $U \hookrightarrow X$ (cf.\ Remark \ref{strat-pullback}). Let $\mathcal{C}^\infty_{co}(U)$ denote the $\mathbb R$-algebra of controlled functions on $U$. It is clear that the association $U \mapsto \mathcal{C}^\infty_{co}(U)$ defines a sheaf of $\mathbb R$-algebras on $X$; we denote this sheaf by $\mathcal{C}^\infty_{co}$ and call it the {\it sheaf of controlled functions}.

\begin{thm}$\mathcal{C}^\infty_{co}$ is a fine sheaf on $X$, i.e., admits a partition of unity.\end{thm}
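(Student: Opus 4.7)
The plan is to upgrade a smooth partition of unity to a controlled one via the compression map of Proposition \ref{strat-compress}. The key observation (from Corollary \ref{cor-cprsctrl}) is that for every stratum $S$, $\Pi \circ \pi_S = \Pi$ on $T_S(\varepsilon)$; hence for any continuous function $f$ on $X$ which is smooth on each stratum, the composition $f \circ \Pi$ automatically satisfies the control condition $(f \circ \Pi) \circ \pi_S = f \circ \Pi$ on $T_S(\varepsilon)$, with respect to the germinally equivalent tube system $\{T_S(\varepsilon)\}_{S \in \Sigma}$. Thus $\Pi$ acts as a ``controllization'' operator on the $\R$-algebra of stratum-wise smooth functions, and the problem reduces to finding a plentiful supply of smooth bumps on $X$ whose supports remain under control after precomposition with $\Pi$.

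Let $\{U_\alpha\}_{\alpha \in A}$ be a locally finite open cover of $X$. First, use the shrinking lemma to produce a locally finite open refinement $\{V_\beta\}_{\beta \in B}$ with an assignment $\beta \mapsto \alpha(\beta)$ satisfying $\overline{V_\beta} \subset U_{\alpha(\beta)}$. Using the realization $j : X \to M$ (Definition \ref{strat-smooth}), extend each $V_\beta$ to an open subset $\widetilde V_\beta \subset M$ with $\widetilde V_\beta \cap X = V_\beta$, and apply fineness of $\mathcal{C}^\infty_M$ to obtain a smooth partition of unity on $M$ subordinate to $\{\widetilde V_\beta\} \cup \{M \setminus X\}$; restriction yields a smooth partition of unity $\{\psi_\beta\}$ on $X$ (in the sense of $\mathcal{C}^\infty_X$) subordinate to $\{V_\beta\}$.

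Next, choose the parameter $\varepsilon > 0$ in the compression construction small enough so that $\Pi^{-1}(V_\beta) \subset U_{\alpha(\beta)}$ for every $\beta \in B$. Now set $\Psi_\beta := \psi_\beta \circ \Pi$. Then $\Psi_\beta$ is continuous on $X$ and smooth on each stratum (since $\Pi$ is smooth in the sense explained after Proposition \ref{strat-compress} and $\psi_\beta$ is smooth as a restriction from $M$); it is controlled by the observation in the first paragraph; it is supported in $\Pi^{-1}(\supp \psi_\beta) \subset \Pi^{-1}(V_\beta) \subset U_{\alpha(\beta)}$; and $\sum_\beta \Psi_\beta = (\sum_\beta \psi_\beta) \circ \Pi \equiv 1$. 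Finally, reassemble by $\Phi_\alpha := \sum_{\beta : \alpha(\beta) = \alpha} \Psi_\beta$ to obtain a controlled partition of unity subordinate to $\{U_\alpha\}$.

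The main technical obstacle is the choice of $\varepsilon$ above: one must ensure that the compression parameter (a positive function on each stratum) can be shrunk enough, consistently across the stratification, to enforce the support containment $\Pi^{-1}(V_\beta) \subset U_{\alpha(\beta)}$. This is delicate because $\Pi$ can dramatically collapse tubular neighborhoods onto deeper strata. The key fact making this possible is Proposition \ref{strat-compress}(4): $\Pi$ is the identity on $L \setminus T_{S,L}(3\varepsilon)$ for all $S<L$, so $\Pi \to \mathrm{id}$ as $\varepsilon \to 0$ uniformly on sets at positive ``radial distance'' from the singular strata. Combined with the positive buffer $\overline{V_\beta} \subset U_{\alpha(\beta)}$ and the local finiteness of $\{V_\beta\}$, this permits an inductive or stratum-by-stratum choice of $\varepsilon$ (starting from the deepest strata, which are closed, and working outwards) achieving the required containment.
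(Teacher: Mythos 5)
Your argument is correct in outline but takes a genuinely different route from the paper. The paper's proof is an induction on depth: for a point $x$ in a stratum $S$ it builds a controlled bump function by hand as $(g\circ\pi_S)\cdot\tilde h$, where $g$ is an ordinary bump on $S$ pulled back along the tubular projection (hence automatically controlled) and $\tilde h$ is a radial cutoff supplied by the inductive hypothesis on the lower-depth space $T_S\setminus S$; this is intrinsic to $(X,\Sigma,\mathscr{T})$ and uses nothing beyond the smoothness of the $\pi_{ij}$. You instead push an \emph{ambient} smooth partition of unity through the compression map, exploiting $\Pi\circ\pi_S=\Pi$ (Corollary \ref{cor-cprsctrl}) to get the control condition for free --- in effect you are proving the $p=0$ case of Proposition \ref{form-compress} and using it as the engine. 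What your approach buys is uniformity and brevity (no induction, and it produces the whole subordinate partition of unity at once rather than just bump functions); what it costs is that it needs a realization $j:X\to M$ (so it proves fineness only for abstractly stratified spaces equipped with a smooth structure, whereas the statement and the paper's proof are purely intrinsic), and it leans on two points that deserve care: (i) stratum-wise smoothness of $\psi_\beta\circ\Pi$ rests on the ``smoothness'' property (6) of Proposition \ref{strat-compress} (the local normal form $\Pi(x,y,t)=(x,y,\psi(t))$), which is exactly the delicate point the paper defers until Section \ref{sec-stratdr}.3, and is cleanest if you take $\Pi$ to be the compression map of the ambient manifold $M$ stratified by $\Sigma\cup\{M\setminus X\}$ and restrict; and (ii) the support containment $\Pi^{-1}(\supp\psi_\beta)\subset U_{\alpha(\beta)}$, which you correctly identify as the technical crux but only sketch --- it does go through by shrinking the compression parameter stratum by stratum using property (4) and local finiteness, but it is precisely the step the paper's construction avoids by building supports directly inside the tubes. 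Both proofs, yours and the paper's, implicitly interpret the control condition $f\circ\pi_{ij}=f$ germinally near $S_i$ rather than on all of $T_{ij}$, consistent with Remark \ref{tubes-germs}, so your appeal to the germinally equivalent tube system $\{T_S(\varepsilon)\}$ is on the same footing as the paper's own argument.
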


\begin{proof}It suffices to construct, for any given point $x \in X$ and any neighborhood $U \subset X$ of $x$, a controlled function $f : X \to \mathbb R$ such that $0 \leq f \leq 1$, $\supp(f) \subset U$ and $f \equiv 1$ throughout a smaller open neighborhood $V \subset X$ of $x$. We use induction on depth to accomplish this. 

Note that an abstractly stratified space of depth $0$ is a manifold, in which case this follows from the standard construction of bump functions. 

By induction, let us assume that theorem is true for all abstractly stratified spaces of depth less than $\dpt(X)$. Let $S \in \Sigma$ denote the unique stratum of $X$ that contains $x$. Let $K \subset S$ be a compact subset containing $x$ and choose $g \in \mathcal{C}^\infty(S)$ such that $0 \leq g \leq 1$, $\supp(g) \subseteq K$ and $g \equiv 1$ throughout an open neighborhood of $x$ contained in $K$. Let $T$ denote the tubular neighborhood of $S$, and $\pi, \rho$ be the associated projection and radial function. 

Define $\tilde{g} \in \mathcal{C}^\infty_{co}(T)$ by $\tilde{g} := g \circ \pi$. Note $\dpt(T \setminus S) < \dpt(X)$. Therefore, by induction hypothesis, there exists $h \in \mathcal{C}^\infty_{co}(T \setminus S)$ with $0 \leq h \leq 1$, $\supp(h) \subset T(\varepsilon)$ and $h \equiv 1$ throughout $T(\varepsilon/2) \setminus S$. Define $\tilde{h} \in \mathcal{C}^\infty_{co}(T)$ by  $\tilde{h}|(T \setminus S) = h$ and $\tilde{h}|S = 1$. We choose $\varepsilon > 0$ and $K \subset S$ such that $\pi^{-1}(K) \cap T(\varepsilon) \subset V$, and define $f \in \mathcal{C}^\infty_{co}(X)$ by $f|T = \tilde{g} \tilde{h}$ and $f|(X \setminus T) = 0$. This is our desired function. \end{proof}

In \cite{Verona_derham}, Verona introduced a controlled notion of differential forms on abstractly stratified spaces. We recall the definition:

\begin{defn}A {\bf controlled differential $p$-form} on an abstractly stratified space $(X, \Sigma, \mathscr{T})$ is a collection  $\omega := \{\omega_i : i \in I\}$ such that for each $i \in I$, $\omega_i$ is a $p$-form on the stratum $S_i\in \Sigma$, such that whenever $i, j \in I$ is a pair of indices, $i < j$, the following control condition holds:
$$\pi_{ij}^* \omega_j = \omega_i \; \text{on} \; T_{ij}$$ \end{defn}

As before, we define a sheaf $\Omega_{co}^p$ of controlled differential $p$-forms on $X$. Note that controlled $0$-forms are simply controlled functions, thus $\Omega_{co}^0 = \mathcal{C}^\infty_{co}$. For each open set $U \subset X$, $\Omega_{co}^p(U)$ is naturally a $\mathcal{C}^\infty_{co}(U)$-module, hence $\Omega_{co}^p$ is a sheaf of $\mathcal{C}^\infty_{co}$-modules. Therefore, $\Omega_{co}^p$ is also a fine sheaf.\\

On a manifold $M$, a $p$-form can be written in a neighborhood of any point $x \in M$ as a linear combination of {\it primitive $p$-forms} of the form $f dg_1 \wedge \cdots \wedge dg_p$ where $f, g_1, \cdots, g_p \in \mathcal{C}^\infty_x$. We give an analogous local description for controlled $p$-forms on an abstractly stratified space $(X, \Sigma, \mathscr{T})$. 

Suppose $x \in X$ is a point, then there is a unique stratum $S \in \Sigma$ containing $x$, and any neighborhood of $x$ in $X$ must contain a further neighborhood of the form $(\pi_S)^{-1}(B) \cap T_S(\varepsilon) = \bigsqcup_{L \geq S} (\pi_{S, L})^{-1}(B) \cap T_{S, L}(\varepsilon)$ where $B \subset S$ is a neighborhood of $x$ in $S$.

By the last paragraph, $\omega_S$ is a linear combination of primitive forms $f dg_1 \wedge \cdots \wedge dg_p$ where $f, g_1, \cdots, g_p \in \mathcal{C}^\infty(B)$ on the ball $B \subset S$, after shrinking the ball if necessary. The functions $f, g_1, \cdots, g_p$ have a unique controlled extension on $(\pi_S)^{-1}(B)$ by setting $\tilde{f} := f \circ \pi_{S, L}$ and $\tilde{g}_i := g_i \circ \pi_{S, L}$ on $\pi_{S, L}^{-1}(B)$, for $1 \leq i \leq p$, for every stratum $L \in \Sigma$ containing $S$ as a boundary stratum. Therefore, we obtain the controlled $p$-form $\tilde{f}_i d\tilde{g}_1 \wedge \cdots \wedge d\tilde{g}_p$ defined in a neighborhood of $x$ in $X$. 

For every stratum $L \geq S$ of $X$, $\omega_L = (\pi_{S, L})^* \omega_S$ which is a linear combination of $(\pi_{S, L})^*(f dg_1 \wedge \cdots \wedge dg_p) = \tilde{f} d\tilde{g}_1 \wedge \cdots \wedge d\tilde{g}_p$ in a neighborhood of $x$ in $X$. Therefore,  $\Omega_{co, x}^p$ is generated by germs of {\it controlled primitive $p$-forms} $f dg_1 \wedge \cdots \wedge dg_p$ where $f, g_1, \cdots, g_p \in \mathcal{C}^\infty_{co, x}$ are germs of controlled function around $x$. 

Notice that if $\omega = \{\omega_S : S \in \Sigma\}$ is a controlled $p$-form, $d\omega := \{d\omega_S : S \in \Sigma\}$ is a controlled $(p+1)$-form, as $\pi_i^*$ commutes with $d$. Therefore, the exterior derivative gives a sheaf homomorphism $d : \Omega_{co}^p \to \Omega_{co}^{p+1}$. 

\begin{thm}[Controlled Poincar\'e lemma]\label{ctrl-lem-poincare} The complex of sheaves 
$$\Omega_{co}^{p-1} \stackrel{d}{\to} \Omega_{co}^p \stackrel{d}{\to} \Omega_{co}^{p+1}$$
is exact.\end{thm}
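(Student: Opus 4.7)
The plan is to verify exactness at the level of stalks. Fix a point $x \in X$ lying in the unique stratum $S \in \Sigma$; my strategy is to exhibit a cofinal family of open neighborhoods of $x$ on which the complex of controlled forms is isomorphic to the smooth de Rham complex of a Euclidean ball in $S$, and then invoke the classical Poincar\'e lemma on manifolds.

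First I would fix a small Euclidean ball $B \subset S$ centered at $x$ and a sufficiently small $\varepsilon > 0$, and work with the open neighborhood $U := \pi_S^{-1}(B) \cap T_S(\varepsilon)$. By Theorem \ref{thom-isotopy} and the local product structure it provides, such sets $U$ form a cofinal family of neighborhoods of $x$ in $X$. Moreover, since $U \subset T_S$, every stratum $L$ that meets $U$ must satisfy $S \leq L$, and $U \cap L = \pi_{S,L}^{-1}(B) \cap T_{S,L}(\varepsilon)$ is entirely contained in $T_{S,L}$. On this locus the control condition $\omega_L = \pi_{S,L}^* \omega_S$ is in effect for any controlled form $\omega$ on $U$.

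This lets me define a map $\Phi : \Omega^p(B) \to \Omega_{co}^p(U)$ by $(\Phi\alpha)_L := \pi_{S,L}^*\alpha$ for $L \geq S$; the $\pi$-control identity $\pi_{S,L} \circ \pi_{L,L'} = \pi_{S,L'}$ ensures that $\Phi\alpha$ is genuinely a controlled form across triple overlaps of tubes, and since $\pi_{S,L}^*$ commutes with $d$, the map $\Phi$ is a cochain map. Restriction to the base stratum, $\Psi\omega := \omega_S|_B$, is the inverse: $\Psi\Phi\alpha = \alpha$ is immediate from $\pi_{S,S} = \mathrm{id}_S$, while $\Phi\Psi\omega = \omega$ follows directly from the control condition applied on each $U \cap L \subset T_{S,L}$. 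Hence $\Omega_{co}^\bullet(U) \cong \Omega^\bullet(B)$ as cochain complexes.

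Passing to the direct limit over $B$ and $\varepsilon$, the stalk complex $\Omega_{co,x}^\bullet$ becomes isomorphic to the stalk at $x$ of the smooth de Rham complex of the manifold $S$. The latter is a resolution of $\mathbb{R}$ by the classical Poincar\'e lemma, hence exact in positive degrees, which establishes exactness of the sheaf complex at every $\Omega_{co}^p$ with $p \geq 1$. The main subtlety I anticipate is justifying that the cofinal system of neighborhoods $U$ really has the property that $U \cap L \subset T_{S,L}$ for every stratum $L$ meeting $U$, which relies on the convention that the tube $T_S$ around $S$ meets only strata $L$ with $S \leq L$, together with Theorem \ref{thom-isotopy}; once that point is nailed down, the rest of the argument reduces cleanly to the manifold Poincar\'e lemma.
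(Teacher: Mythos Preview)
Your proposal is correct and follows essentially the same approach as the paper: both arguments reduce to the classical Poincar\'e lemma on the base stratum $S$ by observing that a controlled form on a neighborhood of the form $\pi_S^{-1}(B)$ is entirely determined by its restriction to $B \subset S$ via pullback along $\pi_{S,L}$. The paper constructs the primitive $\beta_S$ on $B$ directly and then extends it by $\beta_L := \pi_{S,L}^*\beta_S$, whereas you package this same observation as an explicit cochain isomorphism $\Omega^\bullet_{co}(U) \cong \Omega^\bullet(B)$ before invoking the Poincar\'e lemma; the content is identical.
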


\begin{proof}It suffices to prove exactness at the level of stalks. That is, suppose $\alpha \in \Omega_{co, x}^p$ is the germ of a closed controlled $p$-form around a neighborhood of $0$, i.e, $\alpha \in \Omega_{co}^p(U)$ for some neighborhood $U \subset X$ of $x$ such that $d\alpha = 0$. Let $S \in \Sigma$ be the unique stratum of $X$ containing $U$. Then, in particular, $\alpha_S$ defines a closed form in the neighborhood $S \cap U$ of $x$ in $S$. Therefore, by the Poincar\'e lemma for manifolds, there exists a $(p-1)$-form $\beta_S$ on a ball neighborhood $B \subset S \cap U$ of $x$ such that $\alpha_S = d\beta_S$.

Once again, we provide a controlled extension of $\beta_S$ by defining $\beta_L = \pi_{S, L}^* \beta_S$ on $\pi_{S, L}^{-1}(B)$ for all $L \in \Sigma$, $L \geq S$. Then $\beta := \{\beta_L : L \in \Sigma\}$ defines a germ of a controlled $(p-1)$-form around a neighborhood of $x$ such that $d\beta = \alpha$, since $d\beta_S = \alpha_S$ and $d$ commutes with $\pi_{S, L}^*$. This concludes the proof.\end{proof}

Let $\Omega_{co}^p(X)$ be the $\mathbb R$-vector space of controlled differential $p$-forms on $X$. These define a {\it controlled de Rham complex} $(\Omega^\bullet_{co}(X), d)$. 

We obtain from Theorem \ref{ctrl-lem-poincare} that the complex of sheaves 
$$0 \to \underline{\mathbb R} \stackrel{d}{\to} \Omega_{co}^0 \stackrel{d}{\to} \Omega_{co}^1 \stackrel{d}{\to} \cdots$$
is exact. Moreover, each $\Omega_{co}^p$ is a fine sheaf and therefore acyclic. Hence the above is a resolution of the constant sheaf $\underline{\mathbb R}$. Therefore, the sheaf cohomology of $\underline{\mathbb R}$ computes the cohomology of the controlled de Rham complex $(\Omega^\bullet_{co}(X), d)$. On the other hand, the sheaf cohomology of $\underline{\mathbb R}$ is the $\mathbb R$-valued \v{C}ech cohomology of $X$. By Theorem \ref{thom-isotopy}, $X$ is locally contractible, therefore sheaf cohomology is isomorphic to singular cohomology. Combining these, we obtain:

\begin{thm}[Verona's de Rham theorem]\label{thm-veronadr} For any abstractly stratified space $(X, \Sigma, \mathscr{T})$, the cohomology of the complex of controlled differential forms computes the singular cohomology of $X$ with real coefficients:
$$H^\bullet(\Omega^\bullet_{co}(X), d) \cong H^\bullet_{sing}(X; \mathbb R)$$\end{thm}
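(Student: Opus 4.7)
The plan is to assemble Verona's theorem from the three ingredients already established in the paper: the controlled Poincar\'e lemma, fineness of $\Omega^p_{co}$, and local contractibility of abstractly stratified spaces. Concretely, I would proceed by showing that the controlled de Rham complex of sheaves is a fine resolution of the constant sheaf $\underline{\mathbb{R}}$, and then identify the resulting sheaf cohomology with singular cohomology.

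First, I would augment the complex of sheaves $\Omega^\bullet_{co}$ on $X$ by the inclusion of locally constant real-valued functions, producing
$$0 \to \underline{\mathbb{R}} \to \Omega^0_{co} \xrightarrow{d} \Omega^1_{co} \xrightarrow{d} \Omega^2_{co} \to \cdots$$
Exactness at $\Omega^0_{co}$ reduces to showing that a controlled function with vanishing differential on every stratum is locally constant near each point $x \in X$. Using Theorem \ref{thom-isotopy}, a neighborhood of $x$ is isomorphic to $\mathbb{R}^n \times cL$, which is connected; the control condition forces a closed controlled $0$-form to be constant on this neighborhood. Exactness at $\Omega^p_{co}$ for $p \geq 1$ is precisely the controlled Poincar\'e lemma (Theorem \ref{ctrl-lem-poincare}).

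Next, the sheaves $\Omega^p_{co}$ are modules over the sheaf of controlled functions $\mathcal{C}^\infty_{co}$, which admits partitions of unity by the theorem proved earlier in Section \ref{sec-stratdr}. Hence each $\Omega^p_{co}$ is a fine sheaf, and in particular acyclic for sheaf cohomology. Thus $(\Omega^\bullet_{co}, d)$ is a fine resolution of $\underline{\mathbb{R}}$, and taking global sections yields
$$H^\bullet(\Omega^\bullet_{co}(X), d) \cong H^\bullet(X; \underline{\mathbb{R}}),$$
where the right-hand side denotes sheaf cohomology of the constant sheaf.

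Finally, to identify the sheaf cohomology with singular cohomology, I would invoke the comparison theorem which requires $X$ to be locally contractible. This is again supplied by Theorem \ref{thom-isotopy}: every point of $X$ has a neighborhood of the form $\mathbb{R}^n \times cL$, which deformation retracts to the cone point and is therefore contractible. Since $X$ is also paracompact (being a second countable metric space), the standard comparison $H^\bullet(X; \underline{\mathbb{R}}) \cong H^\bullet_{\mathrm{sing}}(X; \mathbb{R})$ holds, completing the chain of isomorphisms. The only step that required genuine work was the controlled Poincar\'e lemma, which has already been dispatched; the remaining arguments are a direct application of sheaf-theoretic machinery once local contractibility and fineness are in hand.
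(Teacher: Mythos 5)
Your proposal is correct and follows essentially the same route as the paper: a fine resolution of $\underline{\mathbb{R}}$ by the controlled de Rham sheaves (exactness from the controlled Poincar\'e lemma, acyclicity from fineness of $\Omega^p_{co}$ as $\mathcal{C}^\infty_{co}$-modules), followed by the comparison of sheaf cohomology with singular cohomology via local contractibility from Theorem \ref{thom-isotopy}. Your explicit verification of exactness at $\Omega^0_{co}$ using the connected local model $\mathbb{R}^n \times cL$ is a detail the paper leaves implicit, but it does not change the argument.
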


We shall henceforth call the cohomology of the complex of controlled differential forms as the {\it controlled de Rham cohomology} of the abstractly stratified space $(X, \Sigma, \mathscr{T})$ and denote it as $H^\bullet_{co}(X) := H^\bullet(\Omega^\bullet_{co}(X), d)$.

\subsection{Isomorphism of cohomology theories}

We have thus obtained three abstractly isomorphic cohomology theories: $H^\bullet_{dR}, H^\bullet_{co}$ and $H^\bullet_{sing}$. In this section we give an explicit isomorphism between the three theories. 

If $(X, \Sigma, \mathscr{T})$ is an abstractly stratified space, we can always replace the tube system $\mathscr{T}$ by an equivalent tube system $\mathscr{T}'$ and consider a homotopy $\mathcal{H} : X\times I \to X$ compressing the new tube system $\mathscr{T}'$ using Proposition \ref{strat-compress} on the equivalent abstract stratification $(X, \Sigma, \mathscr{T}')$ instead. Often we shall need to choose $\mathscr{T}'$ such that the tubes are of sufficiently small radius, cf. Remark \ref{tubes-germs}. In such cases, we shall indicate such a choice by declaring the compression map  {\it sufficiently thin}.

Let $(X, \Sigma, \mathscr{T})$ be an abstractly stratified space equipped with a smooth structure $j : X \to M$. By a slight abuse of notation we identify $X$ with $j(X)$ and treat it as a subset of $M$. We equip $M$ with an abstract stratification as follows: let $\Sigma' := \Sigma \cup \{M \setminus X\}$ and $\mathscr{T}'$ consist of the tubular neighborhood system on the strata of $\Sigma$ as in Theorem \ref{strat-whitabs}. We declare the tube around the open top dimensional stratum $M \setminus X$ to be itself. This defines a valid abstract stratification $(M, \Sigma', \mathscr{T}')$ on $M$ by Theorem \ref{strat-natsume} and Theorem \ref{strat-whitabs}.

Let $\Pi := \mathcal{H}(-, 1) : M \to M$ be a sufficiently thin compression map for the abstractly stratified space $(M, \Sigma', \mathscr{T}')$. We choose $U := \bigcup_{S \in \Sigma} \nu(S)$ to be a union of tubular neighborhoods of  strata as in Theorem \ref{strat-whitabs} satisfying $\Pi(U) = X$. We shall think of $U$ as an analog of the manifold normal neighborhood for the stratified subset $X \subset M$, and $\Pi : U \to X$ as an analog of the normal projection. 

First, we show that a controlled differential form on $X$ has a natural smooth extension to germ of a smooth differential form along $X \subset M$. This will give a canonical embedding of subcomplexes $\Omega^\bullet_{co}(X) \subset \Omega^\bullet(X)$.

\begin{prop} Let $\omega = \{\omega_S : S \in \Sigma\}$ be a controlled differential form on $X$. Then the collection of forms $\{\pi_S^*\omega_S \in \Omega^\bullet(\nu(S)) : S \in \Sigma\}$ patch to give a germ of a smooth differential form $\widetilde{\omega}$ on $U$.\end{prop}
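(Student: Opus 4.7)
The plan is to verify the patching on overlaps $\nu(S) \cap \nu(L)$ for $S, L \in \Sigma$ using the control conditions from Theorem~\ref{strat-whitabs} together with the controlled condition on $\omega$; once the forms agree on overlaps, smoothness of each $\pi_S^*\omega_S$ immediately gives a germ of a smooth form on $U$.

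Concretely, fix two strata with $\nu(S) \cap \nu(L) \neq \emptyset$; by Theorem~\ref{strat-whitabs}, we may assume $S < L$. Shrinking the tubes to work with germs (see Remark~\ref{tubes-germs}), I would restrict attention to a neighborhood contained in $\nu(S) \cap \pi_L^{-1}\nu(S)$, where both control conditions $\pi_S \circ \pi_L = \pi_S$ and $\rho_S \circ \pi_L = \rho_L$ hold. The controlled hypothesis on $\omega$ gives $\pi_{S,L}^*\omega_S = \omega_L$ on $T_{S,L} = \nu(S) \cap L$, where $\pi_{S,L}$ is the restriction of $\pi_S$ to this stratum-wise tube. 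Since $\pi_L$ takes values in $L$ and, on our chosen neighborhood, in $T_{S,L}$, I can compute
\begin{equation*}
\pi_L^*\omega_L \;=\; \pi_L^*\bigl(\pi_{S,L}^*\omega_S\bigr) \;=\; (\pi_{S,L}\circ \pi_L)^*\omega_S \;=\; (\pi_S \circ \pi_L)^*\omega_S \;=\; \pi_S^*\omega_S,
\end{equation*}
where the penultimate equality uses that $\pi_{S,L}$ is simply $\pi_S$ restricted to $L$, and the final equality is the $\pi$-control condition from Theorem~\ref{strat-whitabs}. Thus $\pi_S^*\omega_S$ and $\pi_L^*\omega_L$ agree on the overlap.

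Each individual form $\pi_S^*\omega_S$ is smooth on the open set $\nu(S) \subset M$, because $\pi_S : \nu(S) \to S$ is a smooth submersion onto the smooth submanifold $S \subset M$ and $\omega_S$ is a smooth form on $S$. Having verified the matching on all pairwise overlaps (after possibly shrinking the tubes to land inside $\pi_L^{-1}\nu(S)$ for every pair $S < L$, which can be done by the germinal nature of the construction), the forms $\{\pi_S^*\omega_S\}$ assemble into a well-defined smooth differential form $\widetilde{\omega}$ on the open neighborhood $U = \bigcup_{S \in \Sigma} \nu(S)$ of $X$ in $M$.

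The main point to be careful about is not an analytic obstacle but rather the bookkeeping on where the control identities are valid: the equality $\pi_S \circ \pi_L = \pi_S$ holds only on $\nu(S) \cap \pi_L^{-1}\nu(S)$, not on all of $\nu(S) \cap \nu(L)$. Since we are only claiming a \emph{germ} of a smooth form along $X$, this restriction is harmless: by replacing the tube system with an equivalent one of sufficiently small radius (as permitted by Remark~\ref{tubes-germs}), we can arrange that $\nu(S) \cap \nu(L) \subset \pi_L^{-1}\nu(S)$ whenever $S < L$, so the computation above applies on every overlap.
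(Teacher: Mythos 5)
Your proof is correct and follows essentially the same route as the paper's: the identical computation $\pi_L^*\omega_L = (\pi_{S,L}\circ\pi_L)^*\omega_S = \pi_S^*\omega_S$ on $\nu(S)\cap\pi_L^{-1}\nu(S)$, followed by shrinking the tubes so that every overlap lands inside the domain where the $\pi$-control condition holds. Your closing remark about the bookkeeping of where the control identity is valid is exactly the point the paper also addresses by passing to smaller tubular neighborhoods $\nu'(S)$.
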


\begin{proof}Let $S, L \in \Sigma$ be a pair of strata, with $S < L$. Then,
$$\pi_L^*\omega_L = \pi_L^*\pi_{S, L}^*\omega_S = (\pi_{S, L} \circ \pi_L)^* \omega_S = \pi_S^*\omega_S$$
throughout $\nu(S) \cap \pi_L^{-1}(T_{S, L}) = \nu(S) \cap \pi_L^{-1}\nu(S)$. Now, choose a smaller tubular neighborhood $\nu'(S) \subset \nu(S)$ if necessary such that $\nu'(S) \cap \nu(L) \subset \pi_L^{-1}(\nu(S))$. Then $\pi_L^*\omega_L = \pi_S^*\omega_S$ on $\nu'(S) \cap \nu(L)$. 

Therefore, by adjusting tubular neighborhoods downward along strata if necessary, we can choose a collection of tubular neighborhoods $\{\nu(S) : S \in \Sigma\}$ such that $\pi_S^*\omega_S \in \Omega^\bullet(\nu(S))$ patch to a well-defined smooth differential form $\widetilde{\omega}$ on the open neighborhood $\bigcup_{S \in \Sigma} \nu(S)$ of $X \subset M$, as required.\end{proof}

Let $\eta = \{\eta_S : S \in \Sigma\}$ be a smooth differential form on $X$. We choose an extension $\widetilde{\eta}$ to a smooth differential form in an open neighborhood $U$ of $X \subset M$. $U$ inherits an induced stratification from $(M, \Sigma', \mathscr{T}')$ as usual; we treat $\widetilde{\eta} = \{\eta_S : S \in \Sigma'\}$ as collection of differential forms on strata of $U$, where $\eta_{U \setminus X} := \widetilde{\eta}|(U \setminus X)$.

\begin{prop}\label{form-compress}  $\Pi^*\widetilde{\eta} = \{\iota_S^* \Pi^*\widetilde{\eta} : S \in \Sigma'\}$ is a controlled differential form on $U$.\end{prop}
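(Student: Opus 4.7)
The plan is to verify the control condition for the proposed controlled form directly, by unfolding the definition and reducing it to Corollary \ref{cor-cprsctrl}. Write $\omega_S := \iota_S^* \Pi^* \widetilde{\eta}$ for each $S \in \Sigma'$; we must check that for every pair of strata $S < L$ in $\Sigma'$, the identity $\pi_{S,L}^* \omega_S = \omega_L$ holds on $T_{S,L}$ (after, if necessary, passing to a germ-equivalent thinner tube system, which is harmless in view of Remark \ref{tubes-germs}). Functoriality of pullback reduces this to the claim that, as maps into $M$,
\[
\Pi \circ \iota_S \circ \pi_{S,L} \;=\; \Pi \circ \iota_L \quad \text{on } T_{S,L}(\varepsilon),
\]
since then $\pi_{S,L}^* \omega_S = (\Pi \circ \iota_S \circ \pi_{S,L})^* \widetilde{\eta} = (\Pi \circ \iota_L)^* \widetilde{\eta} = \omega_L$.

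To establish this equality of maps, I would fix an arbitrary $x \in T_{S,L}(\varepsilon) \subset L$. By the very definition of the induced tubular data, $\pi_{S,L}$ is the restriction of the ambient projection $\pi_S$ to the stratum $L$, so $\iota_S(\pi_{S,L}(x)) = \pi_S(\iota_L(x))$ as points of $M$. Applying Corollary \ref{cor-cprsctrl}, which states that $\Pi \circ \pi_S = \Pi$ on $T_S(\varepsilon)$, we then get
\[
\Pi(\iota_S(\pi_{S,L}(x))) \;=\; \Pi(\pi_S(\iota_L(x))) \;=\; \Pi(\iota_L(x)),
\]
which is exactly the required identity. Having chosen $\Pi$ sufficiently thin guarantees that the hypothesis $\iota_L(x) \in T_S(\varepsilon)$ is met throughout $T_{S,L}$.

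Finally, I would assemble these verifications over all pairs $S < L$ of strata in the ambient abstract stratification $(M, \Sigma', \mathscr{T}')$ on $U$. Since controlled forms are defined by a collection of smooth forms on strata together with the $\pi$-compatibility along every comparable pair, and we have just checked all such comparable pairs, the collection $\{\iota_S^* \Pi^* \widetilde{\eta} : S \in \Sigma'\}$ is a controlled form on $U$. The only real content of the argument is Corollary \ref{cor-cprsctrl}, which itself was a direct consequence of the skeleton-preserving property (Remark \ref{rmk-compskel}) and item (5) of Proposition \ref{strat-compress}; there is no substantive obstacle beyond bookkeeping the tube systems carefully and ensuring that $\Pi$ is sufficiently thin so that the corollary applies on the tubes under consideration.
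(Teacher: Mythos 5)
Your argument is correct and is essentially the paper's own proof: both reduce the control condition $\pi_{S,L}^*(\iota_S^*\Pi^*\widetilde{\eta}) = \iota_L^*\Pi^*\widetilde{\eta}$ to the identity $\pi_{S,L} = \pi_S \circ \iota_L$ followed by Corollary \ref{cor-cprsctrl} ($\Pi \circ \pi_S = \Pi$ on $T_S(\varepsilon)$). The only difference is presentational — you phrase it as an equality of maps into $M$ before pulling back $\widetilde{\eta}$, while the paper writes the same chain of pullbacks directly.
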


\begin{proof}Let $S, L \in U \cap \Sigma'$ be a pair of strata, with $S < L$. Let $\iota_S, \iota_L : S, L \to U$ denote the canonical inclusion maps and $\pi_S : \nu(S) \to S \subset U$ be the tubular projection. Then, as $\pi_{S, L} = \pi_S \circ \iota_L$, we have:
$$\pi_{S, L}^* (\iota_S^*\Pi^* \widetilde{\eta}) = \iota_L^* \pi_S^* \iota_S^* \Pi^* \widetilde{\eta} = \iota_L^* (\Pi \circ \iota_S \circ \pi_S)^* \widetilde{\eta} = \iota_L^*(\Pi \circ \pi_S)^*\widetilde{\eta} = \iota_L^*\Pi^*\widetilde{\eta}$$ 
throughout $T_{S, L}$, since $\Pi \circ \pi_S \equiv \Pi$ by Corollary \ref{cor-cprsctrl}.\end{proof}

\begin{cor}\label{cor-extindep} The germ of the smooth form $\Pi^*\widetilde{\eta}$ along $X$ does not depend on the choice of the extension $\widetilde{\eta}$ of $\eta$. \end{cor}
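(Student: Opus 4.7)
The plan is to set $\mu := \widetilde\eta_1 - \widetilde\eta_2$, a smooth form on a neighborhood $U$ of $X$ in $M$, and show $\Pi^*\mu = 0$ on some neighborhood of $X$. Since both extensions restrict to $\eta$, we have $\iota_S^*\mu = 0$ for every $S \in \Sigma$, so the corollary follows by linearity once this is done.

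First, I would apply Proposition \ref{form-compress} to $\mu$ on the stratified space $(U, \Sigma', \mathscr{T}'|_U)$, which gives that $\Pi^*\mu$ is a \emph{controlled} differential form. In particular, for each $S \in \Sigma$ and the top stratum $L := M \setminus X$, the control relation
\[
\iota_L^*\Pi^*\mu \;=\; \pi_{S,L}^*\bigl(\iota_S^*\Pi^*\mu\bigr)\quad\text{on } T_{S,L} = \nu(S)\cap(M\setminus X),
\]
together with the analogous relations among strata of $X$, forces $\Pi^*\mu$ to vanish on a punctured neighborhood of $X$ in $M\setminus X$ once we know $\iota_S^*\Pi^*\mu = 0$ for every $S \in \Sigma$. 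By continuity this propagates to all of a neighborhood of $X$, so it suffices to establish that $(\Pi|_S)^*\mu = \iota_S^*\Pi^*\mu = 0$ for every $S \in \Sigma$.

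Second, I would prove the key geometric claim: for every $y \in U$, the image of $d\Pi_y : T_y M \to T_{\Pi(y)}M$ lies in $T_{\Pi(y)}S^*$, where $S^* \subseteq X$ denotes the stratum containing $\Pi(y)$. I would use the explicit local coordinate formula underlying the smoothness assertion in Proposition \ref{strat-compress}(6): in charts of the form $\R^s \times \R^\ell \times [0,1]$ around a pair $S^* < L$, the map $\Pi$ reads $(x,y,t)\mapsto(x,y,\psi(t))$, where $\psi$ vanishes identically (hence to infinite order) on $[0,c]$. On $t\leq c$ the radial component of $d\Pi$ is killed by $\psi'(t)=0$, so $d\Pi$ collapses onto the factor tangent to $S^*$; on $t > c$ the image point remains in $L$ and $d\Pi$ is tangential to $L$. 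Granted this claim, for $v_1,\ldots,v_k\in T_sS$,
\[
\bigl((\Pi|_S)^*\mu\bigr)_s(v_1,\ldots,v_k) \;=\; \mu_{\Pi(s)}\bigl(d\Pi\,v_1,\ldots,d\Pi\,v_k\bigr) \;=\; \bigl(\iota_{S^*}^*\mu\bigr)_{\Pi(s)}(\cdots) \;=\; 0,
\]
because the vectors $d\Pi\,v_i$ lie in $T_{\Pi(s)}S^*$ and $\iota_{S^*}^*\mu = 0$. Combining this pointwise vanishing with the controlledness from the previous step finishes the proof.

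The main obstacle is the tangential containment $\mathrm{image}(d\Pi_y)\subseteq T_{\Pi(y)}S^*$. Within a single pair $S^* < L$ the delicate case is the boundary $t = c$ of the compression region, which is handled by the flatness of $\psi$ forcing $d\Pi$ to drop into $TS^*$ there. The harder situation is when several strata $S_0 < S_1 < \cdots < S_k$ of $X$ meet at the point in question; here I would induct on the depth of $X$, invoking Corollary \ref{cor-cprsctrl} and the relation $\Pi\circ\pi_{S^*} = \Pi$ on $T_{S^*}(\varepsilon)$ to factor $d\Pi_y = d\Pi_{\pi_{S^*}(y)}\circ d(\pi_{S^*})_y$, and then applying the inductive hypothesis to the compression map on the stratified subspace $\overline{S^*}$, which has strictly smaller depth.
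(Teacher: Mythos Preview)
Your proposal is correct, and its ingredients coincide with those of the paper, though you package them differently. The paper argues by upward induction on strata $L \in \Sigma$: for the deepest stratum $\Pi|_S = \mathrm{id}$ so $\iota_S^*\Pi^*\widetilde\eta = \eta_S$; for higher $L$, it splits $L$ into the tubes $T_{S,L}(\varepsilon)$, where controlledness (Proposition~\ref{form-compress}) reduces to the inductive hypothesis on $S$, and the complement $L \setminus \bigcup_{S<L} T_{S,L}(\varepsilon)$, where $\Pi \circ \iota_L$ is a smooth map into $L$ so the pullback sees only $\eta_L$. You instead first invoke controlledness to reduce the germ statement to $\iota_S^*\Pi^*\mu = 0$ for $S \in \Sigma$, and then aim for a pointwise tangentiality claim on $d\Pi$. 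Two remarks: first, your ``key geometric claim'' is stated for all $y \in U$ and the full $T_yM$, whereas you only use it with $y \in S$ and $v_i \in T_yS$; the stronger version does hold, but its cleanest proof is not the depth induction you sketch---rather, note that $\Pi(y) \in S^*$ forces $y \in T_{S^*}(\varepsilon)$ with $S^*$ deepest (Proposition~\ref{strat-compress}(5)), factor $d\Pi_y = d\Pi_{\pi_{S^*}(y)} \circ d(\pi_{S^*})_y$ via Corollary~\ref{cor-cprsctrl}, and observe $\Pi^{-1}(S^*) \cap S^*$ is open in $S^*$ so $d(\Pi|_{S^*})$ lands in $TS^*$ there. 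That last openness observation is exactly the paper's assertion that $\Pi \circ \iota_L$ maps the off-tube region of $L$ into $L$. So the two arguments are really the same; the paper's is a little shorter because it bypasses the tangentiality abstraction and inducts directly on strata rather than on depth.
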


\begin{proof}We proceed by upward induction on strata. Let $S \in \Sigma$ be a deepest stratum of $X$. Then as $\partial S = \emptyset$, $\Pi \equiv \pi_S$ on $T_S(\varepsilon)$. Thus, $\iota_S^*\Pi^*\widetilde{\eta} = \iota_S^* \widetilde{\eta} = \eta_S$ defined on $S$ is independent of the choice of $\widetilde{\eta}$. 

Let $L \in \Sigma$ be a stratum and by induction assume that for all boundary strata $S \in \Sigma$, $S \subset \partial L$ of $L$, $\iota_S^* \Pi^*\widetilde{\eta}$ defined on $S$ is independent of the choice of $\widetilde{\eta}$. Then $\iota_L^* \Pi^*\widetilde{\eta} = \pi_{S, L}^* \iota_S^*\Pi^*\widetilde{\eta}$ defined on $T_{S, L}(\varepsilon)$ is also independent of the choice of $\widetilde{\eta}$. Finally, the restriction $\Pi \circ \iota_L : L \setminus \bigcup_{S \subset \partial L} T_{S, L}(\varepsilon) \to L \subset U$ is a smooth map, therefore $\iota_L^*\Pi^*\widetilde{\eta} = \iota_L^* \Pi^* \eta_L$ is independent of the choice of $\widetilde{\eta}$ as well. This shows that $\iota_L^* \Pi^* \widetilde{\eta}$ is independent of the choice of $\widetilde{\eta}$,  concluding the proof. \end{proof}

In light of the previous corollary, we can and will unambiguously denote the smooth differential form $\Pi^*\widetilde{\eta}$ along $X$ as $\Pi^*\eta$. 

\begin{prop}\label{prop-che} $\Pi^* : \Omega^\bullet(X) \to \Omega_{co}^\bullet(X)$ is a chain homotopy equivalence. \end{prop}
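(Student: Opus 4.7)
The plan is to show that the canonical inclusion $J : \Omega_{co}^\bullet(X) \hookrightarrow \Omega^\bullet(X)$ furnished by the preceding proposition is a chain homotopy inverse to $\Pi^*$. I will verify two identities: (a) $\Pi^* \circ J = \mathrm{id}$ on $\Omega_{co}^\bullet(X)$ for $\Pi$ chosen sufficiently thin, and (b) there is a chain homotopy $K : \Omega^\bullet(X) \to \Omega^{\bullet-1}(X)$ with $dK + Kd = J \circ \Pi^* - \mathrm{id}$. Together, (a) and (b) exhibit $\Pi^*$ as a two-sided chain homotopy inverse to $J$.

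For (a), fix a controlled form $\eta = \{\eta_S\}$ with canonical extension $\tilde{\eta} = \pi_S^*\eta_S$ on each $\nu(S)$. Given $p \in L$ for some stratum $L$, let $D$ be the deepest stratum with $p \in T_D(\varepsilon)$. By Corollary \ref{cor-cprsctrl}, $\Pi \circ \pi_D = \Pi$ on $T_D(\varepsilon)$, so $\Pi|_{T_D(\varepsilon)}$ factors as $(\Pi|_D) \circ \pi_D$. The $\rho$-control condition gives $\rho_{D'}(\pi_D(p)) = \rho_{D'}(p)$ for $D' < D$, so $\pi_D(p)$ avoids every shallower $\varepsilon$-tube. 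Since $\Pi|_{\overline D}$ is itself a compression map (Remark \ref{rmk-compskel}), choosing $\Pi$ thin enough, together with the explicit local form of $\Pi$ in Proposition \ref{strat-compress}(6), ensures that $\Pi|_D$ is the identity on a neighborhood of $\pi_D(p)$. Hence $\Pi$ agrees with $\pi_D$ as a map $L \to M$ on a neighborhood of $p$, yielding
$$\iota_L^* \Pi^* \tilde{\eta} \;=\; (\pi_D \circ \iota_L)^* (\tilde{\eta}|_D) \;=\; \pi_{D,L}^* \eta_D \;=\; \eta_L$$
near $p$, where $\tilde{\eta}|_D = \eta_D$ follows from the extension formula and the last equality is the control condition on $\eta$. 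Since $p \in L$ and $L \in \Sigma$ were arbitrary, $\Pi^* J \eta = \eta$.

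For (b), extend the compression homotopy $\mathcal{H}$ of Proposition \ref{strat-compress} to the ambient stratification $(M, \Sigma', \mathscr{T}')$, treating it as a smooth map $\mathcal{H} : M \times I \to M$ via Proposition \ref{strat-compress}(6). For $\eta \in \Omega^\bullet(X)$ with smooth extension $\tilde{\eta}$ on an open neighborhood $V \supset X$, set
$$K\eta \;:=\; \int_0^1 \iota_{\partial/\partial t}\, \mathcal{H}^*\tilde{\eta}\,dt.$$
Compactness of $I$ and continuity of $\mathcal{H}$ allow one to shrink $V$ slightly to ensure $\mathcal{H}(V' \times I) \subset V$ for some open $V' \supset X$, so $K\eta$ is a well-defined smooth form on $V'$. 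Since $\mathcal{H}(-, s)$ preserves $X$ set-wise for every $s$, the operator $K$ descends to germs of smooth forms along $X$, independent of the choice of extension. The standard de Rham homotopy formula then gives
$$dK + Kd \;=\; \mathcal{H}(-,1)^* - \mathcal{H}(-,0)^* \;=\; \Pi^* - \mathrm{id},$$
as needed.

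The main obstacle lies in (a): one must verify that $\Pi$ agrees with $\pi_D$ on a full neighborhood of $p$ in $L$, not merely pointwise, since pullback of differential forms uses first-order data. This requires a careful two-scale argument --- choosing $\varepsilon$ for the compression strictly smaller than the scale governing ``deepness'' of strata --- so that for $D' < D$, the estimate $\rho_{D'}(p) \geq \varepsilon_0$ can be upgraded to $\rho_{D'}(\pi_D(p)) > 3\varepsilon_1$ in the thin compression. The explicit local coordinate model from Proposition \ref{strat-compress}(6) supplies precisely the needed control, and an induction on stratum depth (as in the proof of Corollary \ref{cor-extindep}) handles the global bookkeeping.
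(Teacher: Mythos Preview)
Your step (b) is precisely the paper's argument: the paper notes that $\Pi$ is homotopic to the identity on $U$, so $J\Pi^*:\Omega^\bullet(X)\to\Omega^\bullet(X)$ is chain homotopic to the identity, and then simply declares the chain homotopy equivalence because the image of $\Pi^*$ lands in the subcomplex $\Omega^\bullet_{co}(X)$. Strictly, this only exhibits $J$ as having a right homotopy inverse; the paper is implicitly leaning on the earlier fact that both complexes compute $H^\bullet_{sing}(X;\Bbb R)$, so $J$ is a quasi-isomorphism and hence (over $\Bbb R$) a chain homotopy equivalence. Your route via (a) and (b) together is more self-contained and does not need that appeal. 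Interestingly, the paper had your statement (a) written as a separate proposition but left it commented out.

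That said, your argument for (a) has a gap. Knowing that $D$ is the deepest stratum with $p\in T_D(\varepsilon)$ only gives $\rho_{D'}(\pi_D(p))=\rho_{D'}(p)\geq\varepsilon$ for $D'<D$, whereas Proposition~\ref{strat-compress}(4) needs $\geq 3\varepsilon$ to force $\Pi|_D=\mathrm{id}$ near $\pi_D(p)$. Your two-scale fix cannot repair this directly, since the ``deepest'' criterion in Proposition~\ref{strat-compress}(5) is tied to the \emph{same} $\varepsilon$ used by the compression. There are two clean ways through. First, avoid the pointwise decomposition altogether and argue factor by factor: each local compression $\mathcal H_i(-,1)$ satisfies $\pi_i\circ\mathcal H_i(-,1)=\pi_i$ (from the family-of-lines property $\pi_i\circ r_i(\varepsilon)=\pi_i$), and on its support one has $\tilde\eta=\pi_i^*\eta_i$, so $\mathcal H_i(-,1)^*\tilde\eta=\tilde\eta$; composing over $i$ gives $\Pi^*\tilde\eta=\tilde\eta$ on the nose. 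Second, run an upward induction on strata using Proposition~\ref{form-compress}: since $\Pi^*\tilde\eta$ is controlled on the \emph{full} tubes $T_{S,L}$ (not only the $\varepsilon$-tubes), the inductive hypothesis $\iota_S^*\Pi^*\tilde\eta=\eta_S$ immediately gives $\iota_L^*\Pi^*\tilde\eta=\pi_{S,L}^*\eta_S=\eta_L$ on all of $T_{S,L}\supset T_{S,L}(3\varepsilon)$, while $\Pi|_L=\mathrm{id}$ on the complement.
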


\begin{proof}Given a smooth differential form $\eta$ on $X$, we can choose a smooth extension $\widetilde{\eta}$ to some regular neighborhood $U$ of $X \subset M$. For any stratum $S \in \Sigma$ of $X$,
$$\iota_S^* \Pi^*d\widetilde{\eta} = d\iota_S^*\Pi^*\widetilde{\eta}$$
Therefore, $d\Pi^*\eta = \Pi^*d\eta$; hence $\Pi^*$ is indeed a chain map. Next, observe that as $\Pi : U \to U$ is homotopic to the identity, $\Pi^* : \Omega^\bullet(U) \to \Omega^\bullet(U)$ is chain homotopic to the identity. By the natural identification $\Omega^\bullet(U) \cong \Omega^\bullet(X)$, we have that $\Pi^* : \Omega^\bullet(X) \to \Omega^\bullet(X)$ is chain homotopic to the identity. The image of this map lands inside the subcomplex $\Omega_{co}^\bullet(X) \subset \Omega^\bullet(X)$, therefore we obtain the desired chain homotopy equivalence.\end{proof}

\section{Symplectic stratified spaces}\label{sec-sss}

\subsection{Definition and examples}

\begin{defn}\label{def-sympstrat} Let $(X, \Sigma, \mathscr{T})$ be an abstractly stratified space equipped with a smooth structure $j : X \to M$ (see Definition \ref{strat-smooth}, Theorem \ref{strat-natsume}). A smooth $2$-form $\omega = \{\omega_S : S \in \Sigma\} \in \Omega^2(X)$ shall be called a {\bf (stratified) symplectic form} if for every stratum $S \in \Sigma$, $\omega_S$ is a symplectic $2$-form on $S$. \end{defn}

We shall use the term {\it symplectic stratified space} to denote a pair $(X, \omega)$ where $X$ is an abstractly stratified space equipped with a smooth structure, and $\omega \in \Omega^2(X)$ is a symplectic $2$-form. The smooth $2$-form $\omega$ admits an extension $\widetilde{\omega}$ to a smooth $2$-form defined in a neighborhood of $X \subset M$ (cf.\ Remark \ref{rmk-standingnbhd}). Note that we do not require $\widetilde{\omega}$ to be closed and nondegenerate, only that its restrictions to each stratum of $X$ be so. Definition \ref{def-sympstrat} has appeared in the literature  in the context of Marsden-Weinstein reduction theory, and more generally, in the context of symplectic reduction \cite{DJ_singsymp} (cf. \cite{SL_stratsympred}, where an alternative framework for stratified spaces is developed. We discuss the relationship with our definition in Section \ref{sec-sjamaar}). 

Examples of symplectic stratified spaces include (possibly singular) complex projective varieties equipped with a Whitney stratification by quasiprojective subvarieties, with the restriction of the K\"ahler form from the ambient complex projective space. Next note that nondegeneracy is an open condition. Hence, even after a \emph{smooth}  (but not necessarily complex analytic) perturbation of  the embedding of a (possibly singular) complex projective variety in the ambient projective space, we continue to have a symplectic stratified space.

The primary topological obstruction a symplectic $2$-form imposes on a {\it closed manifold} is the associated nonzero cohomology class. For a {\it stratified space} $X$, a priori there is no such cohomology class available in $H^2_{sing}(X; \Bbb R)$ associated to the stratified symplectic form $\omega$. This is because, as discussed in the beginning of Section \ref{sec-stratdr}, there are two flavors of de Rham cohomology available to us, one is smooth de Rham cohomology arising from the complex of $(\Omega^\bullet, d)$ of germs of smooth differential forms along $X \subset M$ and the other is controlled cohomology arising from the subcomplex $(\Omega^\bullet_{co}, d) \subset \bigoplus_{S \in \Sigma} (\Omega^\bullet_S, d_S)$ of controlled differential forms on $(X, \Sigma, \mathscr{T})$. However, 

\begin{enumerate}
\item $\omega$ is rarely a controlled differential form on $X$ as $\omega_S$ is nondegenerate for all $S \in \Sigma$, and controlled differential forms necessarily degenerate near  strata of lower depth.
\item Despite $\omega$ being a smooth differential form on $X$, it is not necessarily a closed smooth differential form as we do not necessarily have a smooth \emph{closed} extension $\widetilde{\omega}$ to an open neighborhood of $X$ in $M$.
\end{enumerate}

We nevertheless manage to extract such a cohomology class by using the Compression Lemma (Proposition \ref{strat-compress}) to compress the smooth, non-controlled, non-closed form $\omega$ into a controlled closed $2$-form $\Pi^* \omega$.

\begin{prop}  Let $(X, \Sigma, \mathscr{T})$ be an abstractly stratified space equipped with a smooth structure $j : X \to M$ inherited from an embedding $j$ into a smooth manifold $M$. Let $\omega = \{\omega_S : S \in \Sigma\} \in \Omega^2(X)$ be a smooth differential $2$-form on $X$ such that $\omega_S$ is closed on $S$ for every stratum $S \in \Sigma$. Then $\Pi^*\omega \in \Omega_{co}^2(X)$ is a closed form.\end{prop}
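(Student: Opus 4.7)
My plan is to first observe that $\Pi^*\omega$ is a controlled form by Proposition \ref{form-compress} together with Corollary \ref{cor-extindep}, so the remaining task is to prove closedness stratum by stratum: $d\bigl((\Pi^*\omega)_L\bigr) = 0$ for every $L \in \Sigma$. Fix such an $L$ and choose a smooth extension $\widetilde{\omega}$ of $\omega$ to a neighborhood of $X \subset M$. By Corollary \ref{cor-extindep} one may write $(\Pi^*\omega)_L = (\Pi|_L)^*\widetilde{\omega}$, where $\Pi|_L : L \to M$ is smooth by the final clause of Proposition \ref{strat-compress}. Naturality of $d$ then yields $d\bigl((\Pi^*\omega)_L\bigr) = (\Pi|_L)^* d\widetilde{\omega}$, and the problem becomes showing that this pullback vanishes on $L$, despite $d\widetilde{\omega}$ itself being nonzero in general.

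The heart of the argument is a tangential-factoring property of the compression map: for every $p \in L$, the differential $d(\Pi|_L)_p$ sends $T_p L$ into the subspace $T_{\Pi(p)} K_p \subset T_{\Pi(p)} M$, where $K_p \in \Sigma$ is the stratum containing $\Pi(p)$. I would verify this using the explicit local description from Proposition \ref{strat-compress}: in coordinates $(x, y, t) \in \R^s \times \R^l \times [0, 1]$ adapted to a boundary stratum $S < L$, with the ambient embedding $(x, y, t) \mapsto (x, t\phi(y))$ of $\overline L$, the map reads $\Pi(x, y, t) = (x, \psi(t)\phi(y))$. A direct Jacobian calculation yields
$$d(\Pi|_L)_{(x, y, t)}(dx, dy, dt) = \bigl(dx,\; \psi(t)\, d\phi(dy) + \psi'(t)\,\phi(y)\, dt\bigr),$$
whose image lies in $T_{\Pi(p)} L$ when $\psi(t) > 0$ and collapses to $(dx, 0) \in T_{(x,0)} S$ when $\psi(t) = 0$, because the smoothness of $\psi$ forces $\psi'$ to vanish identically on $\{\psi = 0\} = [0, c]$. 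As the global $\Pi$ is built as a composition of commuting local compressions $\mathcal{H}_i(-, 1)$ (Theorem \ref{strat-fol}), each of this shape, the tangential property is inherited by the composition.

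Granted the claim, for any $v_1, v_2, v_3 \in T_p L$ the vectors $d\Pi_p(v_i)$ lie entirely in $T_{\Pi(p)} K_p$, so the value of $(\Pi|_L)^* d\widetilde{\omega}$ at $p$ depends only on $\iota_{K_p}^* d\widetilde{\omega} = d(\iota_{K_p}^*\widetilde{\omega}) = d\omega_{K_p}$, which vanishes by hypothesis. This gives $d\bigl((\Pi^*\omega)_L\bigr) = 0$ and completes the argument. The main obstacle is the tangential-factoring claim: while transparent for a single compression near one stratum, one must verify that it persists when the local compressions are composed along a chain of nested strata containing $p$. This relies on the commutation relations from Theorem \ref{strat-fol} combined with the smoothness-forced vanishing of $\psi$ to all orders on $[0, c]$, ensuring that each composed step continues to project tangent vectors tangentially into the relevant image stratum rather than leaking into normal directions of $M$.
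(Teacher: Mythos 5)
Your proof is correct, but it takes a genuinely different route from the paper's. The paper proceeds by upward induction on strata: on the deepest stratum $\Pi^*\omega_S=\omega_S$ is closed; for a higher stratum $L$, the controlledness of $\Pi^*\omega$ (Proposition \ref{form-compress}) gives $\Pi^*\omega_L=\pi_{S,L}^*\Pi^*\omega_S$ on each tube $T_{S,L}(\varepsilon)$, which is closed by the inductive hypothesis since the tube projection $\pi_{S,L}$ is smooth and commutes with $d$, while away from the tubes $\Pi|_L$ is a smooth self-map of $L$ and one pulls back the closed form $\omega_L$ directly. You instead argue pointwise via a tangential-factoring property of $d\Pi$: that $d(\Pi\circ\iota_L)_p$ carries $T_pL$ into the tangent space of the stratum containing $\Pi(p)$, so that $(\Pi\circ\iota_L)^*d\widetilde{\omega}$ only ever samples $\iota_{K}^*d\widetilde{\omega}=d\omega_K=0$. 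The trade-off is clear: the paper's induction needs nothing about $d\Pi$ in the transition annulus $\varepsilon\le\rho_S\le 3\varepsilon$ --- everything on the tube is delegated to the deeper stratum via $\pi_{S,L}$ and Corollary \ref{cor-cprsctrl} --- whereas your argument is more direct and self-contained but hinges on the finer claim about the differential of $\Pi$ there, which you must extract from the local model in Proposition \ref{strat-compress} (vanishing of $\psi$ and $\psi'$ on $[0,c]$) and then propagate through the composition of the commuting local compressions of Theorem \ref{strat-fol}. You correctly flag this as the delicate point; note that the chain-rule step in the composition needs a word of justification, since near a transition point the image of a single local compression is not contained in one stratum, so one should either compute with the local model of the \emph{full} map $\Pi$ (which the paper supplies) or argue that $d(G\circ F\circ\iota_L)_p=d(G\circ\iota_K)_{F(p)}\circ d(F\circ\iota_L)_p$ using that $F\circ\iota_L$ is smooth into $M$ with differential landing in $T_{F(p)}K$. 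With that caveat your argument closes, and it in fact establishes a pointwise statement slightly stronger than what the paper's induction uses.
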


\begin{proof} This is basically because $\omega$ is stratum-wise a closed $2$-form, and pulling back commutes with exterior derivative. However, some care must be taken as the compression map $\Pi$ does not preserve strata (see Remark \ref{rmk-compskel}). 

We proceed by upward induction on strata. Let $S \in \Sigma$ be the deepest stratum of $X$. Then $\Pi^*\omega_S = \omega_S$ is closed on $S$. Let $L \in \Sigma$ be a stratum and by induction assume that for all boundary strata $S \in \Sigma$, $S \subset \partial L$ of $L$, $\Pi^*\omega_S$ is closed on $S$. As $\Pi^*\omega$ is a controlled form on $X$ by Proposition \ref{form-compress}, $\Pi^*\omega_L = \pi_{S, L}^*\Pi^*\omega_S$ is a closed form on $T_{S, L}(\varepsilon)$ for all boundary strata $S \subset \partial L$. Finally, $\Pi|_L : L \setminus \bigcup_{S \subset \partial L} T_{S, L}(\varepsilon) \to L \subset U$ is a smooth map, therefore, $\Pi^*\omega_L$ is a closed form on $L \setminus \bigcup_{S \subset \partial L} T_{S, L}(\varepsilon)$ as well. This concludes the inductive step, and the proposition follows.\end{proof}

Therefore, $\Pi^*\omega$ gives rise to a well-defined cohomology class in $H^2_{co}(X)$ and consequently a singular cohomology class in $H^2_{sing}(X; \Bbb R)$ by Theorem \ref{thm-veronadr}. We shall call $[\Pi^*\omega] \in H^2_{sing}(X; \Bbb R)$ the cohomology class {\it associated} to the symplectic form $\omega$. 

\begin{defn}\label{def-integral} 

Let $(X,\Sigma, \mathscr{T})$ be an abstractly stratified space. Let $\omega = \{\omega_S : S \in \Sigma\} \in \Omega^2(X)$ be a smooth differential 2-form on $X$ such that $\omega_S$ is closed on $S$ for every stratum $S \in \Sigma$. Then $\omega$ is said to be {\bf integral} if $[\Pi^*\omega] \in H^2(X; \Bbb R)$ is contained in the image of the (change of coefficients) homomorphism $H^2_{sing}(X; \Bbb Z) \to H^2_{sing}(X; \Bbb R)$.

In particular, if $(X, \omega)$ is a symplectic stratified space, we call $\omega$ integral if the associated cohomology class $[\Pi^* \omega] \in H^2_{sing}(X; \Bbb R)$ is contained in the image of the homomorphism $H^2(X; \Bbb Z) \to H^2(X; \Bbb R)$.
\end{defn}

We are now in a position to state the main theorem of this paper. We shall use the following convention: given an abstractly stratified space $(X, \Sigma, \mathscr{T})$ equipped with a smooth structure $j : X \to M$, two $k$-forms $\eta, \xi \in \Omega^k(X)$ shall be said to be \textit{equal on $X$} if $\iota_S^* \eta = \iota_S^* \xi$ for all strata $S \in \Sigma$ of $X$. We stress that this is not the same as saying $\eta, \xi$ are equal as elements of $\Omega^k(X)$, as smooth differential forms on $X$ are really germs of differential forms on $M$ along $X$. Observe, $\eta$ equals $ \xi$ on $X$ if and only if $\eta = \xi + \lambda$ for some $\lambda \in \Omega^2(X)$ such that $\iota_S^* \lambda = 0$ for all $S \in \Sigma$. 

\begin{thm}\label{thm-gt}  Let $(X, \omega)$ be a compact stratified symplectic space where $\omega \in \Omega^2(X)$ is integral.  There exists $N \geq 1$ and an embedding $f : X \to \mathbb{CP}^N$ such that $f^*\omega_N = \omega$ on $X$, where $\omega_N$ is the standard K\"{a}hler form on $\mathbb{CP}^N$. 
	
	More generally, let $(X,\Sigma,\mathscr{T})$ be a compact abstractly stratified space, and 
	$\omega \in \Omega^2(X)$ be  integral. Then there exists $N \geq 1$ and an embedding $f : X \to \mathbb{CP}^N$ such that $f^*\omega_N = \omega$ on $X$, where $\omega_N$ is the standard K\"{a}hler form on $\mathbb{CP}^N$. 
\end{thm}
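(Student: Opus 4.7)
The plan is to adapt the classical Gromov--Tischler strategy, organizing it around the two machines the paper has already set up: the compression map $\Pi$ of Proposition \ref{strat-compress} and Verona's chain homotopy equivalence $\Pi^{*}:\Omega^{\bullet}(X)\to\Omega^{\bullet}_{co}(X)$ of Proposition \ref{prop-che}. First I would use these to reinterpret the integrality hypothesis: the controlled closed $2$-form $\Pi^{*}\omega$ represents an integral class in $H^{2}_{sing}(X;\mathbb R)$, so standard obstruction theory, realized via the embedding $X\hookrightarrow M$ of Definition \ref{strat-smooth} and pullback of line bundles from a neighborhood $U$, produces a Hermitian line bundle $L\to X$ together with a classifying map $h:X\to\mathbb{CP}^{M}$ such that $[h^{*}\omega_{M}]=[\Pi^{*}\omega]$ in $H^{2}_{sing}(X;\mathbb R)$, for $M$ sufficiently large.

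Next I would upgrade $h$ to an embedding while preserving the cohomology class. Theorem \ref{strat-natsume} supplies a topological realization $g:X\hookrightarrow\mathbb R^{2n+1}\subset\mathbb C^{N_{0}}\subset\mathbb{CP}^{N_{0}}$; since $g$ lands in the affine chart, $g^{*}\omega_{N_{0}}$ is cohomologically trivial. Forming the Segre product
\begin{equation*}
f_{0}\;:=\;\mathrm{Segre}\circ(g,h)\;:\;X\;\longrightarrow\;\mathbb{CP}^{N_{0}}\times\mathbb{CP}^{M}\;\hookrightarrow\;\mathbb{CP}^{N_{1}}
\end{equation*}
gives an embedding with $f_{0}^{*}\omega_{N_{1}}=g^{*}\omega_{N_{0}}+h^{*}\omega_{M}$, by the K\"ahler property of Segre, so $[f_{0}^{*}\omega_{N_{1}}]=[\omega]$. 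Applying Proposition \ref{prop-che} to the difference $f_{0}^{*}\omega_{N_{1}}-\omega$ then yields a smooth $1$-form $\beta$ on $X$ with $d\beta = f_{0}^{*}\omega_{N_{1}}-\omega$ stratum-wise, and one may arrange $\beta$ to be controlled by passing to $\Pi^{*}\beta$.

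The final step is the Nash--Tischler perturbation that converts stratum-wise cohomological equality into stratum-wise pointwise equality. Using integrality of periods, I would decompose the controlled primitive $\beta$ as a finite sum of terms $r_{i}\,d\phi_{i}$ with $\phi_{i}:X\to S^{1}$ smooth circle-valued functions and $r_{i}$ smooth real, and multiply $f_{0}$ (once again via Segre) by small auxiliary $\mathbb{CP}^{2}$-valued maps built from triples $(r_{i},\cos 2\pi\phi_{i},\sin 2\pi\phi_{i})$. For parameters tuned as in the classical manifold argument of Nash--Gromov--Tischler, the additional Fubini--Study contributions cancel $d\beta$ exactly on each stratum, producing the required embedding $f:X\to\mathbb{CP}^{N}$ with $f^{*}\omega_{N}=\omega$ on $X$. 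The main obstacle is the stratified compatibility of this whole construction: the functions $r_{i},\phi_{i}$ and the Nash cancellation must be simultaneously valid on every stratum of $X$ and must assemble into an honest embedding. Here the two tools of the introduction do the heavy lifting---Proposition \ref{strat-compress} lets me replace every auxiliary choice by its $\Pi$-pullback, forcing it to propagate along tubular projections and hence to glue across strata, while Proposition \ref{prop-che} ensures that the stratum-wise de Rham primitives of $f_{0}^{*}\omega_{N_{1}}-\omega$ can be realized by a single smooth form on $X$, which is exactly what makes the Nash perturbation go through uniformly.
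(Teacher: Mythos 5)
Your overall architecture coincides with the paper's: first produce a \emph{formal solution} (a map to projective space pulling back the K\"ahler class to $[\Pi^*\omega]$, upgraded to an embedding), then run a Nash--Gromov--Tischler corrugation to convert cohomological equality into pointwise equality on strata. Your cosmetic variations (Segre product with a realization in an affine chart, instead of perturbing the classifying map into an embedding for large $N$; a Tischler-style decomposition $\sum r_i\,d\phi_i$ instead of chart-wise $\sum p_i\,dq_i$ with compactly supported coefficients) are both workable, although the appeal to ``integrality of periods'' of the primitive is spurious --- the primitive is not closed, and the decomposition one actually needs is the partition-of-unity decomposition into terms supported in charts that map into trivializing open sets of $\mathcal{O}(1)$ over $\mathbb{CP}^N$.

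The genuine gap is in your third step, which is precisely where the stratified setting forces new work. You assert that Proposition \ref{prop-che} yields a smooth $1$-form $\beta$ with $d\beta = f_0^*\omega_{N_1}-\omega$ stratum-wise. But $\omega$ is only a germ of a smooth $2$-form $\widetilde{\omega}$ near $X\subset M$ that is closed \emph{on each stratum}, not on a neighborhood $U$; hence $f_0^*\omega_{N_1}-\widetilde{\omega}$ is not closed on $U$ and has no primitive there, and a chain homotopy equivalence of complexes cannot by itself produce the form-level identity you need. What is actually required, and what the paper proves by integrating Cartan's magic formula along the compression homotopy $\mathcal{H}$, is the decomposition $f_0^*\omega_{N_1}=\widetilde{\omega}+d\alpha+\beta$ on $U$, where $\alpha$ is an honest smooth $1$-form on $U$ and $\beta=\int_0^1 i_{\partial_t}\,d\mathcal{H}^*\widetilde{\omega}\,dt$ is a smooth $2$-form satisfying $\iota_S^*\beta=0$ for every stratum $S$ --- the vanishing holding exactly because $d\widetilde{\omega}$ restricts to zero on each stratum and $\mathcal{H}$ is stratum-compatible. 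The corrugation then kills $d\alpha$ as a form on $U$, and the leftover $\beta$ is absorbed only because the theorem's conclusion is equality \emph{on $X$} in the stratum-wise sense. Your proposal neither produces this error term nor explains why the final equality is only stratum-wise. Moreover, the suggestion to replace the primitive by $\Pi^*\beta$ to make it controlled is counterproductive: $d(\Pi^*\beta)=\Pi^*(d\beta)$ differs from $d\beta$ on strata, so this destroys the identity, and the corrugation in any case requires the primitive to be an uncontrolled smooth $1$-form on the manifold neighborhood $U$ so that it can be decomposed in coordinate charts.
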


Theorem \ref{thm-gt} generalizes a theorem of Gromov (the isosymplectic embedding theorem  \cite{gromov-icm70}) and Tischler \cite{tischler}. The proof occupies the rest of this section and consists of the following steps:\\

\noindent {\bf Step 1:} We would first like to construct a map $f: X \to \Bbb{CP}^N$ such that 
\begin{equation*}``[f^\ast (\omega_N)] = [\omega]",\end{equation*}
 in a suitable sense. Note that by the discussion preceding Theorem \ref{thm-gt},
$[\omega]$ does not  make sense literally. What we really construct is a smooth map $f : U \to \Bbb{CP}^N$ from a neighborhood $U \subset M$ of $X$ such that
\begin{equation*}[f^\ast (\omega_N)] =[\Pi^*\omega],\end{equation*}
where $\Pi$ is the map occurring in the Compression Lemma (Proposition \ref{strat-compress}). The left hand side is the smooth de Rham class of the smooth closed form $f^*(\omega_N)$ on $U$, while the right hand side is the controlled de Rham class of the controlled closed form $\Pi^*\omega$, and the equality makes sense in light of the fact that smooth de Rham cohomology and controlled de Rham cohomology are both isomorphic to the real singular cohomology of $X$ (see Section \ref{sec-stratdr}). This step is carried out in Section \ref{sec-cohclass}.\\

\noindent {\bf Step 2:} We start with the function
$f: U \to \Bbb{CP}^N$  obtained in Step 1, and consider the `stabilization' $f_1 : U \to \Bbb{CP}^N \subset \Bbb{CP}^{N+k}$.  We would like to homotope
$f_1$ to $g : U \to \Bbb{CP}^{N+k}$, such that  $g^\ast (\omega_N) =\omega$ on $X$. This is accomplished in two stages. We first write 
\begin{equation*}f^\ast \omega_N = \omega + d \eta + \beta,\end{equation*}
where
\begin{enumerate}
\item $\eta$ is a smooth 1-form in the neighborhood $U \subset M$ of $X$, and
\item $\beta$ is a smooth 2-form in $U \subset M$ vanishing on each stratum of $X$.
\end{enumerate}
Using a technique going back to Nash, we modify $f_1$ to an embedding $g : U \to \Bbb{CP}^{N+k}$, so that 
\begin{equation*}g^\ast \omega_{N+k} = \omega + \beta\end{equation*}
on $U$. Roughly speaking, we absorb the $d \eta$ term into $g$ by `corrugating' $f_1$ to increase area. Finally, using the fact that $\beta$ vanishes on strata of $X$, we conclude that $$g^\ast \omega_{N+k} \ {\rm equals} \ \omega \ \text{on} \ X$$
in the sense of the convention preceding Theorem \ref{thm-gt}. This is done in Section \ref{sec-ngt}.

\subsection{Equating second cohomology classes}\label{sec-cohclass}

Let $X$ be a smooth stratified symplectic space. Recall that this means there exists a smooth manifold $M$, a smooth embedding $X \subset M$, an open neighborhood $U$ of $X$, and a 2-form $\omega$ on $U$ such that $\omega\vert_S$ is closed non-degenerate for every stratum $S$. Equip $U$ with the stratification given by $U \setminus X$ along with the strata of $X$. 
Let $\Pi$ be the compression map occurring in the Compression Lemma (Proposition \ref{strat-compress}, for the stratification on $U$. 
By Proposition \ref{strat-compress},  $\Pi^*\omega$ is a smooth closed form on $X$ and hence $[\Pi^*\omega]$ is well-defined. Without loss of generality, we assume henceforth that $U \subset X$ is a $\varepsilon$-neighborhood of $X$ where $\varepsilon > 0$ is chosen sufficiently small so that $\Pi(U \setminus X) = X$. Then $\Pi^*\omega$ is a controlled closed form on the entire stratified space $U$.

\begin{prop}\label{prop-formalsoln} Suppose that $\omega$ is integral. There exists $N \in \natls$ and an embedding $f: U \to \Bbb{CP}^N$, such that
\begin{equation}\label{eq-classeq}[f^\ast (\omega_N)]=[\Pi^*\omega],\end{equation}
on $U$, where $\omega_N$ is the K\"ahler form on $\Bbb{CP}^N$. In particular, Equation \eqref{eq-classeq} holds on $X$.
\end{prop}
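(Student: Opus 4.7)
The plan is to produce $f$ as a Segre composite of two maps: a smooth $g : U \to \mathbb{CP}^N$ whose pullback of the K\"ahler form represents $[\Pi^*\omega]$, together with an auxiliary smooth embedding $h : U \to \mathbb{CP}^L$ whose pullback is cohomologically trivial.

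For the first map, I would use that by Proposition \ref{strat-compress} the homotopy $\mathcal{H}$ exhibits $U$ as deformation retracting onto $X = \Pi(U)$, so $U$ has the homotopy type of the compact stratified space $X$ and in particular of a finite CW complex. By hypothesis, $[\Pi^*\omega]$ lies in the image of $H^2(U;\Z) \to H^2(U;\R)$; let $c \in H^2(U;\Z)$ be an integral lift. Since $\mathbb{CP}^\infty$ is the Eilenberg-MacLane space $K(\Z,2)$, the class $c$ is classified by a continuous map $U \to \mathbb{CP}^\infty$, which by finite CW type factors up to homotopy through $\mathbb{CP}^N$ for some $N \in \N$. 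Smooth approximation then yields a smooth $g : U \to \mathbb{CP}^N$ whose induced map on $H^2$ sends the integer generator of $H^2(\mathbb{CP}^N;\Z)$ to $c$. Under the standard normalization, this generator is represented in real cohomology by $[\omega_N]$, whence $[g^*\omega_N] = [\Pi^*\omega]$ in $H^2(U;\R)$.

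For the auxiliary embedding, Whitney's embedding theorem provides a smooth embedding $U \hookrightarrow \R^L$ for some $L$ (shrinking $U$ to have compact closure in $M$ first, if needed). Composing with the standard affine-chart inclusion $\R^L \hookrightarrow \C^L \hookrightarrow \mathbb{CP}^L$ gives a smooth embedding $h : U \to \mathbb{CP}^L$ that factors through the contractible affine chart $\C^L$; consequently $[h^*\omega_L] = 0$ in $H^2(U;\R)$.

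Finally, I would apply the Segre embedding $\sigma : \mathbb{CP}^N \times \mathbb{CP}^L \hookrightarrow \mathbb{CP}^{(N+1)(L+1)-1}$, which satisfies $\sigma^*[\omega_{(N+1)(L+1)-1}] = p_1^*[\omega_N] + p_2^*[\omega_L]$ in real cohomology. Setting $f := \sigma \circ (g,h)$, the product $(g,h) : U \to \mathbb{CP}^N \times \mathbb{CP}^L$ is an embedding because its second factor $h$ is, so $f$ is an embedding, and
\begin{equation*}
[f^*\omega_{(N+1)(L+1)-1}] \;=\; [g^*\omega_N] + [h^*\omega_L] \;=\; [\Pi^*\omega] + 0 \;=\; [\Pi^*\omega]
\end{equation*}
in $H^2(U;\R)$. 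The equality on $X$ follows by restriction. The main subtle point is the factorization through a finite-dimensional $\mathbb{CP}^N$ together with smoothing, which is precisely where the compactness of $X$ (via Proposition \ref{strat-compress}) is essential.
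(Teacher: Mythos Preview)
Your proof is correct and follows the same core route as the paper: both use that $\mathbb{CP}^\infty \simeq K(\Z,2)$ represents $H^2(-;\Z)$, factor through a finite-dimensional $\mathbb{CP}^N$ by cellular approximation, and then upgrade the resulting map to an embedding. The only difference is in this last step: the paper simply takes $N$ large enough (say $N \geq 2\dim U + 1$) and invokes general position to perturb $f$ to a smooth embedding without changing its homotopy class, whereas you give a more explicit construction via an auxiliary Whitney embedding $h$ into an affine chart and the Segre map. Your route has the virtue of being completely explicit and avoiding any transversality argument; the paper's route is shorter. One small remark: the factorization through a finite $\mathbb{CP}^N$ only needs that $U$ has the homotopy type of a \emph{finite-dimensional} CW complex, which follows already from $U$ being a manifold; compactness of $X$ is not strictly needed at that point (the paper uses compactness only later, in the corrugation step of Section~\ref{sec-ngt}).
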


\begin{proof} Note that 
$[\Pi^*\omega]$ is a well-defined cohomology class in $H^2_{co}(U) = H^2_{co} X)$.
By Theorem \ref{thm-veronadr}, $H^2_{co}(U) \cong H^2_{sing}(U, \R)$. Further, since $\om$ is integral,
$[\Pi^*\omega]$ lies in the image of $H^2_{sing} (U, \Z)$ in $H^2_{sing} (U, \R)$.

Next, observe that $H^2_{sing}(U, \Z) \cong [U, \Bbb{CP}^\infty]$ as $\Bbb{CP}^\infty \simeq K(\Bbb Z, 2)$ is an Eilenberg-Maclane space, which represent singular cohomology. Thus, we have the following sequence of maps:
$$[U, \Bbb{CP}^\infty] \stackrel{\cong}\longrightarrow H^2_{sing} (U, \Z)  \rightarrow H^2_{sing} (U, \R)
\stackrel{\cong} \longrightarrow H^2_{dR} (U) \stackrel{\cong} \longrightarrow H^2_{co} (U),$$
where the second arrow is injective and the remaining maps are isomorphisms. The third arrow is the
de Rham isomorphism, and the fourth arrow is the pullback by the compression map, sending $[\eta]$ to $[\Pi^*\eta]$ for any closed form $\eta$. Let $\alpha \in H^2_{sing} (\C P^\infty, \Z)$ denote the generator of the cohomology ring. Then the first arrow
sends $f \in [U, \C P^\infty]$ to $[f^* \alpha]$. 

As $U$ is a manifold, it has the homotopy type of a finite-dimensional CW-complex. By the cellular approximation theorem, $[U, \Bbb{CP}^\infty] \cong [U, \Bbb{CP}^N]$ for some sufficiently large natural number $N$. Under this identification, the first arrow $[U, \Bbb{CP}^N] \cong H^2_{sing}(U, \Bbb Z)$ can be described as $f \mapsto f^*(\omega_N)$ where $\omega_N$ is the K\"ahler form on $\Bbb{CP}^N$. Thus, the composition of the first three arrows is given by $f \mapsto [f^*\omega_N]$.

On the other hand, the integral cohomology class $[\Pi^*\om] \in \mathrm{im}(H^2_{sing}(U; \Bbb Z) \to H^2_{sing} (U, \R))$ defines a homotopy class of maps $U \to \Bbb{CP}^N$ for some large $N$. Choosing any $f$ from this homotopy class,
we have 
$$[f^*\omega_N]=[\Pi^*\om].$$
Choosing $N$ sufficiently large, and homotoping $f$ slightly if necessary, we can assume without loss of generality that $f: U \to \C P^N$ is a smooth embedding. This proves the first assertion of the proposition.

Similarly, we also have:
$$ [X, \C P^\infty] \stackrel{\cong}\longrightarrow H^2_{sing} (X, \Z)  \rightarrow H^2_{sing} (X, \R)
\stackrel{\cong} \longrightarrow H^2_{dR} (X, \R) \stackrel{\cong} \longrightarrow H^2_{co} (X, \R),$$
and the maps are natural with respect to the inclusion $X \subset U$, proving the 
second  assertion of the proposition.\end{proof}

\subsection{The Nash-Gromov-Tischler embedding}\label{sec-ngt}
In Proposition \ref{prop-formalsoln} we obtained what can be described as a {\it formal solution} to the problem of finding an \emph{isosymplectic embedding} of $X$ in $\Bbb{CP}^N$ in the sense of \cite[Section 12.1.1]{em-book}, i.e.\ a map $f : X \to \Bbb{CP}^N$ satisfying $f^*[\omega_N] = [\Pi^* \omega]$. In this section we upgrade $f$ to a {\it holonomic solution}, which in this case would be an honest isosymplectic embedding $g : X \to \Bbb{CP}^m$ for some $m \geq 1$, i.e., $g^*\omega_m = \omega$.

We begin by observing that $f^*\omega_N = \Pi^* \omega + d\alpha'$ for some $\alpha' \in \Omega^1(X)$. Let $\widetilde{\omega}$ be a smooth extension of $\omega$ to some regular neighborhood $U$ of $X \subset M$. Recall $\Pi : U \to X$ is the time-$1$ map of a homotopy $\mathcal{H} : U \times I \to U$. We use Cartan's magic formula to integrate over the fibers of this homotopy,
\begin{align*}\Pi^*\widetilde{\omega} - \widetilde{\omega} = \mathcal{H}(-, 1)^*\widetilde{\omega} - \mathcal{H}(-, 0)^*\widetilde{\omega} &= \int_0^1 \partial_t(\mathcal{H}^*\widetilde{\omega}) \,dt
\\ &= \int_0^1 \mathcal{L}_{\partial_t} \mathcal{H}^*\widetilde{\omega} \, dt
\\ & = \int_0^1 \left ( d i_{\partial_t} \mathcal{H}^*\widetilde{\omega} +  i_{\partial_t} d \mathcal{H}^*\widetilde{\omega} \right)\, dt
\\ &= d\alpha'' + \beta\end{align*}
where we have set
\begin{gather*}\alpha'' = \int_0^1 i_{\partial_t} \mathcal{H}^*\widetilde{\omega}\, dt  \\ \beta = \int_0^1 i_{\partial_t} d\mathcal{H}^*\widetilde{\omega}\, dt\end{gather*} 
Now, observe,
\begin{align*}\beta_p(v_1,v_2) &= \int_0^1 d\mathcal{H}^*\widetilde{\omega}_p(\partial_t, v_1, v_2) dt
\\ &= \int_0^1 d\widetilde{\omega}_q(\mathcal{H}_*(\partial_t), \mathcal{H}_*(v_1), \mathcal{H}_*(v_2)) dt \end{align*}
where $q = \mathcal{H}(p)$. Suppose $p \in X \times 0 \subset U \times I$ and $v_1, v_2 \in T_p S$ where $S \in \Sigma$ is the unique stratum containing $p$. Then $q \in X$ and as $\mathcal{H} : U \times I \to U$ is a smooth map, $\mathcal{H}_*(v_1), \mathcal{H}_*(v_2), \mathcal{H}_*(\partial_t) \in T_q S'$, where $S' \in \Sigma$ is the unique stratum containing $q$. As $\omega$ is closed restricted to every stratum, we obtain that the integrand is zero, therefore $\iota_S \beta = 0$ for every stratum $S \in \Sigma$ of $X$.
Combining everything, we obtain
$$f^*\omega_N = \omega + d\alpha + \beta$$
where $\alpha = \alpha' + \alpha'' \in \Omega^1(U)$ is a smooth $1$-form and $\beta \in \Omega^2(U)$ is a smooth $2$-form which vanishes along $X$, in the sense that $\iota_S^* \beta = 0$ for every stratum $S \in \Sigma$.\\

We shall try to modify $f$ to an embedding $g : U \to \Bbb{CP}^m$ such that $g^*\omega_m = \omega + \beta$ on $U$. Roughly, our strategy is to first {\it stabilize} $f$ to $f : U \to \Bbb{CP}^{N+1}$ by composing with the canonical inclusion $\Bbb{CP}^N \hookrightarrow \Bbb{CP}^{N+1}$ for some $k \geq 1$ to gain ``enough space" so as to {\it corrugate} $f$ (in the spirit of \cite{em-book}) within a normal neighborhood of $\Bbb{CP}^N$ in $\Bbb{CP}^{N+1}$, and obtain a new map $f_1 : U \to \Bbb{CP}^{N+1}$. The purpose of these corrugations is to compensate, at in least in part, for the ``area-defect" $d\alpha$ in $f^*\omega_N - \omega$. We repeat this process many times to finally obtain an isosymplectic embedding $g : U \to \Bbb{CP}^{N+k}$. 

Let us carry out the above sketch in detail. First, select a finite open cover $\{V_1, \cdots, V_\ell\}$ of $\Bbb{CP}^N$ such that the normal bundle $\mathcal{O}(1)$ of $\Bbb{CP}^N \subset \Bbb{CP}^{N+1}$ trivializes over each element of the open cover. We pullback this open cover to $U$ by $f$, and pass to a finer collection $\{W_1, \cdots, W_k\}$ of charts of $M$ covering $X$ such that $f(W_i) \subset V_{j}$ for some $1 \leq j \leq \ell$, using compactness of $X$. \emph{This is the only place where we need compactness of $X$.}  By passing to an even finer finite collection covering $X$ if necessary, we can assume that there exist compactly supported smooth functions $p_i, q_i : U \to \Bbb R$ with $\supp(p_i), \supp(q_i) \subset W_i$ such that,
$$\alpha = p_1 dq_1 + \cdots + p_k dq_k$$
Indeed, to see that this refinement is possible, note that by using a partition of unity subordinate to $\{W_1, \cdots, W_k\}$, it suffices to check that such a decomposition is possible for a $1$-form defined on any one of the charts $W_i \subset U$. Equivalently, it suffices to check this for any $1$-form on an open subset of a Euclidean space, where it is clear. We conclude the following:
$$f^*\omega_N = \omega + \sum_{i = 1}^k dp_1 \wedge dq_1 + \beta$$

Next, let us stabilize $f$ by a single extra dimension to $f : U \to \Bbb{CP}^N \subset \Bbb{CP}^{N+1}$. Focus on a single chart $W_i$, and suppose that $f(W_i) \subset V_j$, where $1 \leq j \leq \ell$. By our choice of $V_j$, we have a trivialization of the normal bundle, $\mathcal{O}(1)|_{V_j} \cong V_j \times \Bbb C$. By the symplectic tubular neighborhood theorem, there is a symplectic embedding of an appropriate disk bundle of $\mathcal{O}(1)$ in $\Bbb{CP}^{N+1}$ as a normal neighborhood of $\Bbb{CP}^N$. In particular we obtain a symplectic embedding $V_j \times \Bbb D(\varepsilon) \hookrightarrow \Bbb{CP}^{N+1}$ as a piece of the normal neighborhood lying over $V_j \subset \Bbb{CP}^N$, for some $\varepsilon > 0$.

Using Darboux coordinates, there exists a smooth map $h_i : W_i \to \Bbb D(\varepsilon)$ such that
\begin{equation}\label{eq-darboux} h_i^*(dz \wedge d\bar{z}) = dp_i \wedge dq_i,\end{equation}
where $\Bbb D(\varepsilon)$ is equipped with a complex coordinate $z$.
Using Equation \eqref{eq-darboux}, we can construct a "corrugation" of $f$ as follows. Define $f_1 : U \to \Bbb{CP}^{N+1}$ by $f_1(x) = (f(x), -h_1(x))$ for all $x \in W_1$ and $f_1 \equiv f$ outside of $W_1$. Observe, 
$$f_1^*(\omega_{n+1}) = f^*\omega_N - h_i^*(dz \wedge d\bar{z}) = \omega + \sum_{i = 2}^k dp_1 \wedge dq_1 + \beta.$$
We repeat this process $k$ times to obtain $g := f_k : U \to \Bbb{CP}^{N+k}$ with $$g^*\omega_{n+k} = \omega + \beta.$$
Since $\beta\vert_S = 0$ for every stratum $S \in \Sigma$, this completes the proof of Theorem \ref{thm-gt}. \hfill $\Box$\\

The \emph{proof} of Theorem \ref{thm-gt} above gives the following $h$-principle result for symplectic embeddings in complex projective spaces.

\begin{cor}\label{cor-hat}  Let $(X, \Sigma, \mathscr{T})$ be an abstractly stratified space equipped with a smooth structure $j : X \to M$. Let $\omega \in \Omega^2(X)$ be a smooth $2$-form and $f : X \to \Bbb{CP}^N$ be an embedding such that $f^* [\omega_N] = [\Pi^*\omega]$. Then there exists $k \geq 1$ and an embedding $g : X \to \Bbb{CP}^{N+k}$ such that $g^*\omega_{n+k} = \omega$ on $X$, and moreover $g$ is $\varepsilon$-close to $f : X \to \Bbb{CP}^N \subset \Bbb{CP}^{N+k}$ in the $C^0$-topology.\end{cor}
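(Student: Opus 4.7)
The plan is to reuse Step~2 from the proof of Theorem~\ref{thm-gt} essentially verbatim, feeding the given $f$ into it instead of the $f$ produced by Step~1 (Proposition~\ref{prop-formalsoln}). First I would extend the embedding $f$ to a smooth map defined on an open neighborhood $U \subset M$ of $X$, shrinking $U$ if necessary; the hypothesis $f^*[\omega_N] = [\Pi^*\omega]$ forces $\Pi^*\omega$ to be closed, which in turn forces each $\omega_S$ to be closed, so the compression argument following Proposition~\ref{strat-compress} applies. Since the two closed $2$-forms $f^*\omega_N$ and $\Pi^*\omega$ represent the same class in $H^2_{dR}(U) \cong H^2_{co}(X)$, there exists a smooth $1$-form $\alpha' \in \Omega^1(U)$ with $f^*\omega_N = \Pi^*\omega + d\alpha'$.

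Next I would apply Cartan's magic formula to the compression homotopy $\mathcal{H}: U \times I \to U$ exactly as in Section~\ref{sec-ngt}, obtaining $\Pi^*\widetilde{\omega} - \widetilde{\omega} = d\alpha'' + \beta$. The crucial point, verified in Section~\ref{sec-ngt}, is that stratum-wise closedness of $\omega$ forces $\iota_S^* \beta = 0$ for every stratum $S \in \Sigma$. Setting $\alpha := \alpha' + \alpha''$, this yields the decomposition
$$f^*\omega_N \;=\; \omega \;+\; d\alpha \;+\; \beta,$$
with $\alpha \in \Omega^1(U)$ and $\beta \in \Omega^2(U)$ vanishing on every stratum of $X$. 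This is precisely the input required for the Nash--Gromov--Tischler corrugation argument of Section~\ref{sec-ngt}.

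Then I would run the stabilization-and-corrugation procedure unchanged: cover $X$ by finitely many charts $W_1, \ldots, W_k$ of $M$ such that $\alpha = \sum_{i=1}^{k} p_i \, dq_i$ with $\supp(p_i), \supp(q_i) \subset W_i$ and each $f(W_i)$ lies in a trivializing chart for the normal bundle $\mathcal{O}(1) \to \mathbb{CP}^N$; then stabilize one complex dimension at a time, at step $i$ replacing the current map by $(f_{i-1}(x), -h_i(x))$ on $W_i$, where $h_i: W_i \to \mathbb{D}(\varepsilon_0)$ is a Darboux map with $h_i^*(dz \wedge d\bar z) = dp_i \wedge dq_i$ lying in the symplectic normal $\varepsilon_0$-disk bundle of $\mathbb{CP}^{N+i-1}$ inside $\mathbb{CP}^{N+i}$. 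After $k$ iterations we have $g := f_k : U \to \mathbb{CP}^{N+k}$ with $g^*\omega_{N+k} = \omega + \beta$, which equals $\omega$ on $X$ since $\beta$ vanishes on every stratum.

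The only feature that goes beyond the proof of Theorem~\ref{thm-gt} is the $C^0$-closeness, and this is where the main technical point lies. Each corrugation step only displaces the current map within a normal $\varepsilon_0$-disk bundle, so it suffices to realize the pullback relation $h_i^*(dz \wedge d\bar z) = dp_i \wedge dq_i$ by a map $h_i$ with $\|h_i\|_{C^0} < \varepsilon/k$. For fixed $p_i, q_i$ this looks obstructed by the integral $\int h_i^*(dz \wedge d\bar z)$, but the standard Nash oscillation trick resolves it: one replaces $h_i$ by a rapidly oscillating map of small amplitude, compensating the small target size by high frequency (so the map covers its image many times with algebraic cancellation). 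Choosing $\varepsilon_0 < \varepsilon/k$ in each step and summing over the $k$ iterations yields $\|g - (\iota \circ f)\|_{C^0} < \varepsilon$, where $\iota : \mathbb{CP}^N \hookrightarrow \mathbb{CP}^{N+k}$ is the standard inclusion, completing the proof.
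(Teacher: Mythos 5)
Your proposal is correct and matches the paper's intent exactly: the paper offers no separate proof of Corollary~\ref{cor-hat}, stating only that it follows from the proof of Theorem~\ref{thm-gt}, and your argument is precisely that proof of Step~2 (Section~\ref{sec-ngt}) rerun with the given formal solution $f$ in place of the one from Proposition~\ref{prop-formalsoln}. Your added discussion of $C^0$-closeness via small-amplitude, high-frequency corrugations correctly fills in a point the paper leaves implicit (the corrugations stay inside the $\varepsilon$-normal disk bundles).
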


In particular, if $[\Pi^*\omega] = 0$, then the constant map $f : X \to \Bbb{CP}^0$ satisfies the hypothesis of Corollary \ref{cor-hat} above. Hence, by Corollary \ref{cor-hat}, we obtain a symplectic embedding $g : X \to \Bbb{CP}^k$ such that $g$ is $\varepsilon$-close to the constant map to a point $p := \Bbb{CP}^0 \subset \Bbb{CP}^k$, and therefore has image contained in the affine open chart $\Bbb{R}^{2n} \subset \Bbb{CP}^0$ containing $p$. We conclude the following.

\begin{cor}\label{cor-0impliesrn} Let $(X, \Sigma, \mathscr{T})$ be an abstractly stratified space equipped with a smooth structure $j : X \to M$. Suppose $\omega \in \Omega^2(X)$ is a smooth $2$-form such that $[\Pi^*\omega] = 0$. Then there exists an embedding $f : X \to \Bbb R^{2N}$ such that $f^* \omega^0_N = \omega$ where $\omega^0_N$ is the standard symplectic form on $\Bbb R^{2N}$. 
\end{cor}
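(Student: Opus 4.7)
The plan is to combine the $h$-principle from Corollary \ref{cor-hat} with Darboux's theorem, using the former to push $X$ into an arbitrarily small neighborhood of a point in some $\mathbb{CP}^m$, and the latter to transport the Fubini--Study form on that neighborhood to the standard form on $\mathbb{R}^{2m}$.

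First, I would construct a formal solution. Since $[\Pi^*\omega] = 0 \in H^2_{co}(X) \cong H^2_{sing}(X; \mathbb{R})$, fix any point $p \in \mathbb{CP}^0$ and let $c : X \to \mathbb{CP}^0 \hookrightarrow \mathbb{CP}^N$ be the constant map with value $p$. Then $c^*[\omega_N] = 0 = [\Pi^*\omega]$ tautologically, so $c$ supplies the formal solution required by Corollary \ref{cor-hat}. Applying that corollary (with $\varepsilon > 0$ to be pinned down shortly), I would obtain $k \geq 1$ and an embedding $g : X \to \mathbb{CP}^{N+k}$ satisfying $g^*\omega_{N+k} = \omega$ on $X$ and $g(X) \subset B_\varepsilon(p)$, the $\varepsilon$-ball around $p$ in $\mathbb{CP}^{N+k}$.

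Next, I would invoke Darboux's theorem applied to $(\mathbb{CP}^{N+k}, \omega_{N+k})$ at the point $p$: there exists an open neighborhood $U \ni p$ together with a symplectomorphism $\phi : (U, \omega_{N+k}|_U) \to (V, \omega^0_{N+k}|_V)$ onto an open subset $V \subset \mathbb{R}^{2(N+k)}$. Using compactness of $X$ (and hence continuity of $g$ achieving a uniform $C^0$-bound), I would pre-select $\varepsilon$ small enough so that $B_\varepsilon(p) \subset U$, which forces $g(X) \subset U$. Then the composition $f := \phi \circ g : X \to \mathbb{R}^{2(N+k)}$ is an embedding, and
$$f^*\omega^0_{N+k} = g^*(\phi^*\omega^0_{N+k}) = g^*\omega_{N+k} = \omega \text{ on } X,$$
giving the required isosymplectic embedding into Euclidean space.

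There is essentially no substantive obstacle, since both of the required tools (Corollary \ref{cor-hat} and Darboux's theorem) are already in place. The only subtlety is the order of quantifiers: one must fix the Darboux neighborhood $U$ first, then choose $\varepsilon$ so that $B_\varepsilon(p) \subset U$, and only then apply Corollary \ref{cor-hat} with this particular $\varepsilon$. Compactness of $X$ is what converts the $C^0$-closeness of $g$ to the constant map $c$ into an honest uniform containment of $g(X)$ in $U$, so that the Darboux chart can be legitimately composed with $g$.
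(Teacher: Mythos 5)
Your proposal is correct and follows essentially the same route as the paper: take the constant map to a point as the formal solution, apply Corollary \ref{cor-hat} to get an isosymplectic embedding $C^0$-close to that point, and then transport to $\mathbb{R}^{2N}$. The one place you are actually more careful than the paper is the final step: the paper simply observes that $g(X)$ lands in an affine chart of $\mathbb{CP}^{N+k}$, whereas the Fubini--Study form on that chart is not literally the standard form $\omega^0_{N+k}$, so your explicit composition with a Darboux symplectomorphism (with the quantifiers ordered as you describe) is exactly the right way to close that small gap.
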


\section{Algebraic and geometric notions of symplectic stratified spaces}\label{sec-sjamaar}

\subsection{Sub and quotient algebras of functions}\label{sec-eqqnts} 
We start with the following theorem of Schwarz \cite{schwarz-Rn} and Mather \cite{mather-diffinv}
as summarized in \cite[Theorem 1.1]{herbig-pflaum}.
\begin{thm}\label{thm-schwarzmather}
Let $G \to O(V )$ be a finite dimensional orthogonal representation
of the compact Lie group $G$. Let $\rho_1\cdots,\rho_l \in  \R[V ]^G$ be a minimal complete system
of polynomial invariants. Let $\rho = (\rho_1\cdots,\rho_l) : V \to \R^l$ be the corresponding Hilbert
map. Then the pullback
$\rho^* : C^\infty (\R^l) \to C^\infty(V )^G$
is split surjective, i.e.\ there exists a continuous map $\lambda : C^\infty(V )^G \to 
C^\infty (\R^l)$ such that $\rho^* \circ \lambda
:C^\infty(V )^G \to C^\infty(V )^G$ is the identity.
\end{thm}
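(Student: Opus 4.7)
I would split the statement into two independent assertions: (a) surjectivity of $\rho^*$, i.e.\ every $G$-invariant smooth function on $V$ is of the form $h \circ \rho$ for some $h \in C^\infty(\R^l)$, and (b) the existence of a continuous section $\lambda$ to $\rho^*$. Part (a) is the content of Schwarz's original 1975 theorem; part (b) is the additional input of Mather.

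For (a), my plan would be to exploit the fact that for a compact group, the Hilbert map separates orbits: two points $v, w \in V$ satisfy $\rho(v) = \rho(w)$ if and only if $v$ and $w$ lie in the same $G$-orbit. This is a standard consequence of the fact that invariant polynomials of a compact group separate orbits (via averaging of coordinate polynomials against Haar measure). Consequently, any $f \in C^\infty(V)^G$ descends to a well-defined continuous function $\bar f$ on the image $\rho(V) \subset \R^l$, and the problem reduces to showing that $\bar f$ extends to a smooth function on all of $\R^l$. The genuine difficulty lies at points where $\rho$ degenerates (i.e.\ images of non-principal orbits). Here I would appeal to a division/preparation machinery: either Malgrange's preparation theorem or Tougeron's formal division theorem, applied to Taylor expansions of $f$ along $G$-orbits. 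One establishes inductively, orbit-stratum by orbit-stratum, that the Taylor series of $f$ at any point $v$ is a formal power series in $\rho_1, \ldots, \rho_l$, and then uses Borel's lemma together with a Whitney-type extension theorem for the image $\rho(V)$ (which is semi-algebraic, hence Whitney stratified) to produce the desired smooth $h$ on $\R^l$.

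For (b), once $\rho^*$ is known to be a continuous linear surjection between Fr\'echet spaces, the existence of a continuous (not necessarily linear) right inverse $\lambda$ follows from a Bartle-Graves / Michael selection type argument: the preimage $(\rho^*)^{-1}(f)$ is a closed affine subspace of $C^\infty(\R^l)$ depending lower-semicontinuously on $f$, and one constructs $\lambda$ by patching local selections with a smooth partition of unity on $C^\infty(V)^G$ or by applying Michael's selection theorem directly in the Fr\'echet category. Alternatively, one can construct $\lambda$ concretely: choose a proper semi-algebraic Whitney stratification of $\rho(V)$ and build $\lambda(f)$ stratum-by-stratum by formally inverting $\rho^*$ modulo controlled error terms killed by $\rho^*$.

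The main obstacle will be step (a), specifically the passage from formal Taylor series in the $\rho_i$'s to an actual smooth extension. This is where the machinery of Malgrange/Tougeron preparation is indispensable and where naive smoothing arguments fail, because the Hilbert map $\rho$ is not a submersion along non-principal orbit types. The combinatorial control of how formal $G$-invariant power series are generated by the $\rho_i$'s depends delicately on the orbit-type stratification of $V$, and this is the technical core of Schwarz's original argument.
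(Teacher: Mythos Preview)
The paper does not prove this statement. Theorem~\ref{thm-schwarzmather} is quoted verbatim as a known result of Schwarz and Mather, with the formulation attributed to Herbig--Pflaum; the paper then uses it as a black box in Section~\ref{sec-eqqnts}. There is therefore no proof in the paper against which to compare your proposal.

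That said, your outline is a fair high-level sketch of the actual arguments in the cited sources: part~(a) is indeed Schwarz's 1975 theorem, and the core difficulty you identify --- passing from formal invariance of Taylor series in the $\rho_i$ to a genuine smooth extension across non-principal orbit types --- is exactly where Schwarz invokes Malgrange-type preparation. For part~(b), your Bartle--Graves/Michael selection suggestion would work in principle, but Mather's 1977 paper obtains the continuous splitting more directly, essentially by showing that Schwarz's construction can be carried out with uniform control so that $f \mapsto h$ is continuous in the Fr\'echet topologies; this is closer to your ``alternative'' suggestion of building $\lambda$ stratum-by-stratum than to an abstract selection argument. If you intend to actually supply a proof rather than cite one, be aware that the preparation-theoretic step in~(a) is substantial and not something one can fill in with a paragraph; the paper wisely treats the whole theorem as an external input.
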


The map $\rho$ factors through a quotient map $i: V/G \to \R^l$, which is a proper topological embedding as shown in \cite[Section 3]{mather-diffinv}. The embedding $i$ is referred to as the {\it Hilbert embedding} in \cite{herbig-pflaum}. We shall follow this terminology.

Let $X = i(V/G)$. We equip $V$ with the decomposition by orbit type, which descends to a decomposition of $V/G \cong X$ as well. This decomposition of $X \subset \Bbb R^l$ is in fact a Whitney stratification, as alluded to in the remarks following the Proof of Theorem 6.7 in \cite[pg. 42]{SL_stratsympred}. Briefly, here are the key ideas: The strata of $V$ are semialgebraic, and since the map $\rho$ is algebraic, their images are semialgebraic subsets of $\Bbb R^l$ by the Tarski-Seidenberg theorem. Therefore, the strata of $X$ are semialgebraic subsets of $\Bbb R^l$. To check that a pair of strata $(S, L)$ of $X$ satisfies the Whitney condition $(b)$, one applies the slice theorem at a point $x \in S$ to reduce to the case where $S = \{x\}$ is a singleton. By Whitney's theorem (see the discussion on p.\ 21 of \cite{gwpl} for instance), the locus of points of $S$ where a pair of semialgebraic smooth manifolds $(S, L)$ fails to satisfy Whitney condition $(b)$ is a semialgebraic set of dimension strictly less than $\dim(S)$, hence in the case where $S$ is a singleton it must be empty. This proves the result.

As explained in \cite{herbig-pflaum}, the orbit space $V/G \cong X$ can thus be equipped with a ``smooth structure"  in the sense of \cite[pg. 7]{SL_stratsympred} (which is \emph{not} to be confused with our notion in Definition \ref{strat-smooth}) in two different ways:

\begin{enumerate}
\item By setting $C^\infty(V/G) = C^\infty(V)^G$ as the algebra of smooth functions on $X$.
\item By setting $C^\infty(X) = \{f \in C^0(X) \vert \exists F \in C^\infty(\R^l ) \ \text{s.t.} \ F\vert_X = f \}$ as the algebra of smooth functions on $X$.
\end{enumerate}

\begin{cor}\label{cor-isomRn}\cite[pg. 2]{herbig-pflaum}
The Hilbert embedding $i$ induces an isomorphism of algebras 
$$i^*: C^\infty(X) \to C^\infty(V)^G$$
If $Z \subset V$ is a closed $G$-invariant subset, then likewise 
$$i^*: C^\infty(\rho(Z) ) \to C^\infty(Z)^G$$
is an isomorphism of algebras, where $C^\infty(\rho(Z)), C^\infty(Z)$ are the subalgebra of continuous functions on $\rho(Z), Z$ which admit a smooth extension to $\Bbb R^l, V$, respectively.
\end{cor}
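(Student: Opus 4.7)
The plan is to deduce both isomorphisms from Theorem \ref{thm-schwarzmather} together with the factorization $\rho = i \circ \pi$, where $\pi : V \to V/G$ is the quotient map and $i : V/G \to \R^l$ is the Hilbert embedding with image $X$. Under this factorization, for a function $f$ defined on $X$, the element denoted $i^*(f) \in C^0(V)^G$ in the statement is literally the composition $f \circ \rho$, once $V/G$ is identified with $X$ via $i$. In particular, the displayed map in each assertion is manifestly a homomorphism of $\R$-algebras, so only bijectivity requires argument.

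For the first assertion, injectivity is immediate because $\rho : V \to X$ is surjective by construction: $f \circ \rho \equiv 0$ forces $f \equiv 0$ on $X$. For surjectivity, let $h \in C^\infty(V)^G$; by Theorem \ref{thm-schwarzmather} there exists $F \in C^\infty(\R^l)$ with $F \circ \rho = h$. The restriction $f := F|_X$ belongs to $C^\infty(X)$ by the very definition of that algebra, and $i^*(f) = F \circ \rho = h$.

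For the second assertion, injectivity of $i^* : C^\infty(\rho(Z)) \to C^\infty(Z)^G$ is again immediate because $\rho|_Z : Z \to \rho(Z)$ is surjective. For surjectivity, given $g \in C^\infty(Z)^G$, first pick any smooth extension $\tilde g \in C^\infty(V)$ with $\tilde g|_Z = g$, available by the definition of $C^\infty(Z)$. Since $G$ is compact, averaging $\tilde g$ against the Haar measure produces $\hat g \in C^\infty(V)^G$, and since both $g$ and $Z$ are $G$-invariant, $\hat g|_Z = g$ automatically. Applying the first part of the corollary (or Theorem \ref{thm-schwarzmather} directly) yields $F \in C^\infty(\R^l)$ with $F \circ \rho = \hat g$, and setting $f := F|_{\rho(Z)}$ gives an element of $C^\infty(\rho(Z))$ with $f \circ \rho|_Z = g$.

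No serious obstacle arises beyond an unpacking of definitions; the only nontrivial input is the split surjectivity of $\rho^*$ from Theorem \ref{thm-schwarzmather}. The step most worth double-checking is the compatibility of the $G$-averaging with the prescribed boundary value along $Z$, but this is automatic from $G$-invariance of both $g$ and $Z$.
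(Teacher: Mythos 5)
Your proof is correct and rests on the same two ingredients the paper uses: the (split) surjectivity of $\rho^*$ from Theorem~\ref{thm-schwarzmather} for the first assertion, and Haar averaging of an arbitrary smooth extension to produce a $G$-invariant extension for the second. The paper merely packages the second assertion as a quotient-algebra identification (viewing $C^\infty(\rho(Z))$ as $C^\infty(X)$ modulo the ideal of functions vanishing on $\rho(Z)$, and matching that ideal with the $G$-invariant functions vanishing on $Z$), whereas you chase elements directly to get bijectivity; the content is the same.
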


A word about the last statement of Corollary \ref{cor-isomRn} above is in order (see Remark 4.2 and Proposition 4.4 of \cite{herbig-pflaum} for a more general statement). This is because $C^\infty(\rho(Z))$ can be identified with the quotient of $C^\infty(X)$ by the ideal $\II_{\rho(Z)}$ of functions that vanish on $\rho(Z)$. The image of this ideal $i^* (\II_{\rho(Z)}) \subset C^\infty(V)^G$ consists of precisely the $G-$invariant functions on $V$ that vanish on $Z$, hence 
$$C^\infty(V)^G/i^*(\II_{\rho(Z)}) \cong C^\infty(Z)^G$$
Indeed, for any $G$-invariant function $f \in C^\infty(Z)$, if $F \in C^\infty(V)$ is a smooth extension of $f$, averaging with respect to the Haar measure on the compact Lie group $G$, we obtain a $G$-invariant extension $\widetilde{F} \in C^\infty(V)^G$ of $f$. Thus, the restriction map $C^\infty(V)^G \to C^\infty(Z)^G$ is surjective and the kernel of this map is exactly the ideal $i^*(\II_{\rho(Z)})$. 

Therefore $i^*$ induces an isomorphism $C^\infty(\rho(Z)) \cong C^\infty(Z)^G$, as required.

\begin{rmk}\label{rmk-subanalytic} The main theorem of \cite{herbig-pflaum} is a generalization of Corollary \ref{cor-isomRn} where $Z$ is not assumed to be a $G$-invariant subset. This requires assuming that $Z$ is subanalytic, which is not necessary for our purposes.\end{rmk}

Let $(M,\om)$ be a smooth symplectic manifold equipped with a Hamiltonian $G-$action. Let $\mu: M \to \mathfrak{g}^*$ be the associated moment map. If $G$ acts on $M$ with finitely many orbit types, then by the Mostow--Palais theorem
\cite{mostow-mp,palais-mp} there exists a $G-$equivariant smooth embedding $\phi : M \to \Bbb R^n$ where $G$ acts by some orthogonal representation on $\Bbb R^n$. Let $\rho : \Bbb R^n \to \Bbb R^l$ be the Hilbert map for this action, which factors through the Hilbert embedding $i : \Bbb R^n/G \to \Bbb R^l$.

Let $Z = \mu^{-1}(0) \subset M$ so that $Z/G = M\sslash G$ is the symplectic reduction. Let $X = \rho(\phi(Z))$ be the embedded image of $Z/G$ in $\Bbb R^l$. Let $C^\infty(Z), C^\infty(X)$ denote the subalgebra of continuous functions on $Z, X$ which extend to smooth functions on $M, \Bbb R^l$, respectively. 

\begin{prop}\label{prop-mu}
The restriction $i : Z/G = M\sslash G \to X$ induces an isomorphism of algebras $i^* : C^\infty(X) \to C^\infty(Z)^G$. 
\end{prop}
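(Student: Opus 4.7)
The plan is to factor the map $i : Z/G \to X$ through the intermediate space $\phi(Z)/G \subset \Bbb R^n/G$ and reduce the proposition to a combination of Corollary \ref{cor-isomRn} and a transport-of-structure along the $G$-equivariant embedding $\phi$. Specifically, the map in the statement factors as
\[
Z/G \xrightarrow{\bar\phi} \phi(Z)/G \xrightarrow{\,\bar i\,} X,
\]
where $\bar\phi$ is the homeomorphism induced by the $G$-equivariant embedding $\phi : M \to \Bbb R^n$, and $\bar i$ is the restriction of the Hilbert embedding $\Bbb R^n/G \to \Bbb R^l$. Pulling back functions, it suffices to show that each of the two corresponding algebra maps $\bar i^* : C^\infty(X) \to C^\infty(\phi(Z))^G$ and $\bar\phi^* : C^\infty(\phi(Z))^G \to C^\infty(Z)^G$ is an isomorphism.

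The first isomorphism is immediate from Corollary \ref{cor-isomRn}: the set $\phi(Z) = \phi(\mu^{-1}(0)) \subset \Bbb R^n$ is a closed $G$-invariant subset because $Z$ is closed $G$-invariant in $M$ and $\phi$ is a proper $G$-equivariant embedding (for $M$ compact; in general $\phi(Z)$ is closed whenever $\phi$ is an embedding of a proper $G$-space). Applying the last statement of Corollary \ref{cor-isomRn} with $V = \Bbb R^n$ and $Z$ there replaced by $\phi(Z)$ yields that $\bar i^*$ is an isomorphism of algebras.

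For the second isomorphism, I would argue by constructing mutually inverse extension procedures. Surjectivity: given $g \in C^\infty(Z)^G$, pick a smooth extension $\widetilde g \in C^\infty(M)$ and average over the Haar measure of $G$ to obtain a $G$-invariant smooth extension in $C^\infty(M)^G$. Choose a $G$-equivariant tubular neighborhood $U$ of $\phi(M)$ in $\Bbb R^n$ (obtained by averaging an arbitrary tubular neighborhood over $G$) and a $G$-invariant smooth bump function supported in $U$; composing with the equivariant retraction $U \to \phi(M)$ produces a $G$-invariant smooth extension $F \in C^\infty(\Bbb R^n)^G$ of $\widetilde g \circ \phi^{-1}$, and $\bar\phi^*(F|_{\phi(Z)}) = g$. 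Injectivity is formal: $\bar\phi$ is a bijection, so $\bar\phi^* f = 0$ forces $f = 0$ pointwise on $\phi(Z)$. Composing the two isomorphisms $\bar\phi^* \circ \bar i^*$ yields the asserted isomorphism $i^* : C^\infty(X) \to C^\infty(Z)^G$. The main technical point in the argument is the construction of a $G$-equivariant tubular neighborhood of $\phi(M) \subset \Bbb R^n$; this is the step that uses essentially the compactness of $G$ (to average) and the fact that $\phi$ is a smooth embedding with locally closed image, so that the standard tubular neighborhood theorem applies.
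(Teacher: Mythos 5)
Your proof is correct and takes essentially the same route as the paper, whose entire proof is the single line ``We apply Corollary \ref{cor-isomRn} to $Z=\mu^{-1}(0)$.'' The only addition is that you make explicit the identification of $C^\infty(\phi(Z))^G$ (functions extending smoothly to $\mathbb{R}^n$) with $C^\infty(Z)^G$ (functions extending smoothly to $M$) via an equivariant tubular neighborhood of $\phi(M)$ --- a step the paper leaves implicit by identifying $M$ with $\phi(M)$ --- and that you flag the closedness of $\phi(Z)$ needed to invoke the corollary.
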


\begin{proof}
We apply Corollary \ref{cor-isomRn} to $Z = \mu^{-1}(0)$. 
\end{proof}

\subsection{Two notions}\label{sec-def12} We first recall Definition \ref{def-sympstrat} giving a 
{\it geometric} notion of a symplectic stratified space: let $(X, \Sigma, \mathscr{T})$ be a stratified space equipped with an embedding $j : X \to M$ into a smooth manifold $M$ defining a smooth structure on $X$. Then,
as per Definition \ref{def-sympstrat}, a smooth $2-$form $\omega = \{\omega_S : S \in \Sigma\} \in \Omega^2(X)$ is a  (stratified) symplectic form if for every stratum $S \in \Sigma$, $\omega_S$ is a symplectic $2-$form on $S$. Note that $\om$ admits an extension $\til \om$ to an open neighborhood of $X$ in $M$. For the purposes of this section, we shall refer to $X$ equipped with
a (stratified) symplectic form as a {\it geometric symplectic stratified space}. 

In 
\cite[Definition 1.12]{SL_stratsympred}, Sjamaar and Lerman  (see also \cite{sjamaar-lms,pflaum-expo,slv}) 
provided a different definition of a symplectic stratified space, specifically in the context of symplectic
reduction. Specializing their definition by choosing an appropriate algebra of smooth functions on the stratified space, we have the following notion of a Poisson--symplectic stratified space, where we adjoin the prefix `Poisson' to distinguish it from the geometric notion of Definition \ref{def-sympstrat}.

\begin{defn}\label{def-alsss}
Let $(X, \Sigma, \mathscr{T})$ be a stratified space equipped with an embedding $j : X \to M$ into a smooth manifold $M$ defining a smooth structure on $X$. Let $C^\infty(X)$ denote the algebra of continuous functions on $X$ which extend to a smooth function on $M$. Then $X$ is said to be a {\it Poisson--symplectic stratified space} if

\begin{enumerate}
\item Each stratum $S$ is equipped with a symplectic $2-$form $\omega_S$. Let $\{\cdot \, , \, \cdot \}_S$ denote the induced Poisson bracket on $C^\infty(S)$.
\item $C^\infty(X)$ is equipped with a Poisson algebra structure. Let $\{\cdot \, , \, \cdot \}$ denote the Poisson bracket on $C^\infty(X)$. 
\item The inclusions $S \to X$ are Poisson embeddings, i.e.\ the Poisson bracket induced on the restrictions $\{f\vert_S, f \in C^\infty(X)\}$ is given by $\{\cdot \, , \, \cdot \}_S$. Equivalently, the restriction map 
  $$\big( C^\infty(X), \{\cdot \, , \, \cdot \} \big)
 \to \big( C^\infty(S), \{\cdot \, , \, \cdot \}_S \big)$$ is a Poisson map.
 \end{enumerate}
\end{defn}

\subsection{Algebraic implies geometric}\label{sec-alimpliesgeo}

Recall that a nondegenerate bilinear form $B : V \times V \to \Bbb R$ on any vector space defines an isomorphism $B : V \to V^*$ and its inverse $B^{-1} : V^* \to V$.
In particular, for a symplectic manifold $(N, \omega)$, the symplectic form 
$\omega$ gives an isomorphism $\omega_n : T_n N \to T_n^*N$ for every $n \in N$ and hence an inverse $\omega_n^{-1} : T_n^*N \to  T_n N$. We denote by $\omega^{-1}$ the smoothly parametrized family $\{\omega_n^{-1}\}$
  as $n$ ranges over $N$.

\begin{defn}Let $(N, \omega)$ be a symplectic manifold equipped with a Riemannian metric $g$. We define $\mathrm{Inv}_g(\omega)$ to be the $2$-form on $N$ corresponding to the composition
$$TN \stackrel{g}{\longrightarrow} T^*N \stackrel{\omega^{-1}}{\longrightarrow} TN \stackrel{g}{\longrightarrow} T^*N$$
Equivalently, for any pair of vector fields $X, Y$, 
$$\mathrm{Inv}_g(\omega)(X, Y) := g(Z, Y)$$ 
where $Z$ is the vector field uniquely defined by $\omega(Z, -) = g(X, -)$.\end{defn}

Observe that the matrix of $\mathrm{Inv}_g(\omega)$ with respect to an orthonormal basis at a point is the \emph{inverse} of the matrix of $\omega$, hence the terminology. In the following proposition, we show that the condition of Poisson--symplecticity on a stratified space is equivalent to extension of the collection of $2$--forms on each stratum given by inverting the stratumwise symplectic forms, to a smooth $2$--form on the ambient manifold. 

\begin{prop}\label{prop-alimpliesgeo}
Let $(X, \Sigma, \mathscr{T})$ be a Poisson--symplectic stratified space equipped with a smooth structure $j : X \to M$. Let $g$ be an ambient Riemannian metric on $M$. Let $\{\omega_S : S \in \Sigma\}$ denote the collection of symplectic forms. Then, the collection of $2$--forms
$$\{\mathrm{Inv}_g(\omega_S) : S \in \Sigma\}$$
admits an extension to a smooth $2$-form $\theta$ on $M$, i.e., $i_S^*\theta = \mathrm{Inv}_g(\omega_S)$ for all $S \in \Sigma$, where $i_S : S \hookrightarrow M$ is the inclusion map.
\end{prop}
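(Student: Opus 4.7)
The plan is to use the Poisson bracket on $C^\infty(X)$ to build, locally on $M$, a smooth bivector field that restricts to $\omega_S^{-1}$ on each stratum $S$, and then convert this bivector into a $2$-form via the metric $g$. The bivector-to-$2$-form conversion is
$$\theta_{ab} = g_{ac}\,g_{bd}\,\widetilde{\pi}^{cd},$$
and a short linear-algebra computation (at a point $x \in S$, in a basis adapted to the $g$-orthogonal splitting $T_xM = T_xS \oplus T_xS^{\perp}$) shows that if $\widetilde{\pi}(x) \in \Lambda^2 T_xM$ lies in $\Lambda^2 T_xS$ and agrees there with $\omega_S^{-1}(x)$, then the resulting $\theta$ satisfies $\iota_S^*\theta(x) = \mathrm{Inv}_g(\omega_S)(x)$. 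So the whole problem reduces to constructing such a bivector $\widetilde{\pi}$.

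For the local construction, I would fix a point $x_0 \in X$, a chart $U \subset M$ about $x_0$ with coordinates $(x_1, \ldots, x_n)$, and a cut-off function $\chi$ equal to $1$ near $x_0$ and supported in $U$. Each $(\chi x_i)|_X$ lies in $C^\infty(X)$, so the brackets
$$\pi^{ij} \;:=\; \bigl\{(\chi x_i)|_X,\,(\chi x_j)|_X\bigr\} \;\in\; C^\infty(X)$$
admit, by the very definition of $C^\infty(X)$, smooth extensions $\widetilde{\pi}^{ij}$ to $M$. Set $\widetilde{\pi}_U := \sum_{i<j} \widetilde{\pi}^{ij}\,\partial_i \wedge \partial_j$ on $U$. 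At $x \in U \cap S$ where $\chi \equiv 1$, the Poisson embedding hypothesis (Definition \ref{def-alsss}(3)) together with locality of the symplectic Poisson bracket on $C^\infty(S)$ gives
$$\pi^{ij}(x) \;=\; \{x_i|_S,\,x_j|_S\}_S(x) \;=\; \omega_S^{-1}\bigl((dx_i|_S)(x),\,(dx_j|_S)(x)\bigr),$$
and pairing $\widetilde{\pi}_U(x)$ against arbitrary $\alpha,\beta \in T_x^*M$ expanded in the coordinate coframe immediately yields $\widetilde{\pi}_U(\alpha,\beta)(x) = \omega_S^{-1}(\alpha|_S, \beta|_S)(x)$. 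Equivalently, $\widetilde{\pi}_U(x)$ coincides with the bivector $\omega_S^{-1}(x) \in \Lambda^2 T_xS \subset \Lambda^2 T_xM$.

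To globalise, I would cover $M$ by such coordinate charts $\{U_\alpha\}$, extract local bivectors $\widetilde{\pi}_\alpha$, and average them against a smooth partition of unity $\{\rho_\alpha\}$ subordinate to $\{U_\alpha\}$: the sum $\widetilde{\pi} := \sum_\alpha \rho_\alpha\, \widetilde{\pi}_\alpha$ is a smooth bivector field defined in a neighbourhood of $X$ in $M$, and the identity $\widetilde{\pi}_\alpha|_S = \omega_S^{-1}$ on each stratum (wherever $\widetilde{\pi}_\alpha$ is defined, which includes every point where $\rho_\alpha \neq 0$) combined with $\sum_\alpha \rho_\alpha \equiv 1$ yields $\widetilde{\pi}|_S = \omega_S^{-1}$ for every $S \in \Sigma$. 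Multiplying by a bump function equal to $1$ near $X$ extends $\widetilde{\pi}$ to all of $M$ without altering its restriction to $X$, and the metric dualisation produces the required $\theta \in \Omega^2(M)$.

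The main obstacle is the point-by-point identification $\widetilde{\pi}|_S = \omega_S^{-1}$, which hinges crucially on condition (3) of Definition \ref{def-alsss}: the restriction map $C^\infty(X) \to C^\infty(S)$ is a Poisson map. Without this Poisson embedding hypothesis the numbers $\pi^{ij}(x)$ would be unconstrained, the smooth extensions $\widetilde{\pi}^{ij}$ would bear no geometric relationship to $\omega_S^{-1}$, and the patching argument would have nothing to say about $\iota_S^*\theta$.
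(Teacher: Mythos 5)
Your proposal is correct and is essentially the paper's argument in different packaging: both proofs extract the components of the stratumwise inverse bivector as Poisson brackets of ambient coordinate functions, invoke conditions (2)--(3) of Definition \ref{def-alsss} to extend those brackets smoothly to $M$, and then convert the resulting data into a $2$-form whose pullback to each stratum is $\mathrm{Inv}_g(\omega_S)$ (the paper does this globally, via a single embedding $M \hookrightarrow \mathbb{R}^n$ and the explicit projected-inverse endomorphism $\Theta_S = Q_S(P_S\Omega_S Q_S)^{-1}P_S$, whereas you work in local charts with cutoffs and a partition of unity). One small point to tighten in your globalisation step: $\widetilde{\pi}_\alpha$ agrees with $\omega_S^{-1}$ only where $\chi_\alpha \equiv 1$, not everywhere it is defined, so the partition of unity $\{\rho_\alpha\}$ must be chosen subordinate to the cover by the interiors of the sets $\{\chi_\alpha = 1\}$ rather than to $\{U_\alpha\}$ --- a routine adjustment, but as written the claim in your parenthetical is not quite right.
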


\begin{proof}Let $\phi : M \to \Bbb R^n$ be a smooth embedding in a Euclidean space and let $x_1, \cdots, x_n \in C^\infty(\Bbb R^n)$ be the standard coordinate functions on $\Bbb R^n$. Restricted to $M$, and then to $X$, these give rise to functions $x_1, \cdots, x_n \in C^\infty(X)$. By Condition $(2)$ and $(3)$ in Definition \ref{def-alsss}, for any $1 \leq i, j \leq n$, the collection of functions 
$$\{\{x_i, x_j\}_S : S \in \Sigma\}$$
defined on each strata extends to a smooth function on $M$. That is, there exists $f_{ij} \in C^\infty(M)$ such that $f_{ij}|_S = \{x_i, x_j\}_S$ for all $S \in \Sigma$. $f_{ij}|_X$ would be the Poisson bracket of $x_i$ and $x_j$ in $C^\infty(X)$ as in Condition $(2)$.

Let $H_{S, i}$ denote the Hamiltonian vector field corresponding to $x_i$ in $(S, \omega_S)$, defined by $\omega_S(H_{S, i}, V) = dx_i(V)$ for any vector field $V$ on $S$. Then,
$$\{x_i, x_j\}_S = \omega_S(H_{S, i}, H_{S, j})$$
For any stratum $S \in \Sigma$, we may extend $\omega_S$ to a (not necessarily properly) embedded tubular neighborhood $\nu_S$ of $S$ in $\Bbb R^n$, and therefore write
$$\omega_S = \sum_{1 \leq i < j \leq n} g_{ij} dx_i \wedge dx_j$$
for smooth functions $g_{ij}$ defined on $\nu_S$. Let $\Omega_S$ denote the $n \times n$ antisymmetric matrix of functions defined on $\nu_S$ given uniquely by $(\Omega_S)_{i, j} = g_{ij}$ if $i < j$. Then 
$$\omega_S(v, w)_p = \langle\Omega_S(p)(v), w\rangle$$
for any $p \in S, v, w \in T_p S \subset T_p \Bbb R^n$ where on the right hand side we write $v, w \in T_p \Bbb R^n$ in terms of the standard basis $\partial/\partial x_1, \cdots, \partial/\partial x_n$, and $\langle \cdot, \cdot \rangle$ is the standard inner product on $\Bbb R^n$. In light of this, $\omega_S(H_{S, i}, V) = dx_i(V)$ implies 
\begin{align*}
\langle \Omega_S (H_{S, i}), v\rangle &= dx_i(v) = \langle \partial/\partial x_i, v\rangle 
\end{align*}
for all tangent vectors $v$ to $S$. Thus, $\Omega_S(H_{S, i}) = \partial/\partial x_i + Y_{S, i}$ where $Y_{S, i}$ is a normal vector field to $S$ in $M$, with respect to the Riemannian metric induced from the embedding $\phi : M \to \Bbb R^n$.

Let us treat $\Omega_S$ as a bundle endomorphism of $T\Bbb R^n|_S$. If $P_S : T\Bbb R^n|_S \to TS$ denotes the fiberwise orthogonal projection, and $Q_S : TS \to T\Bbb R^n|_S$ denotes the fiberwise inclusion, then we have 
$$(P_S \Omega_S Q_S)(H_{S, i}) = P_S(\partial/\partial x_i)$$
As $\omega_S$ is nondegenerate on $S$, $P_S \Omega_S Q_S : TS \to TS$ is a bundle isomorphism. Hence, 
$$H_{S, i} = (P_S \Omega_S Q_S)^{-1}(P_S(\partial/\partial x_i))$$
Observe that $(P_S \Omega_S Q_S)^{-1}P_S$ is a bundle endomorphism $T\Bbb R^n|_S \to TS$. Composing with $Q_S : TS \to T\Bbb R^n|_S$, we write 
$$\Theta_S := Q_S(P_S\Omega_S Q_S)^{-1}P_S$$
which is a bundle endomorphism of $T\Bbb R^n|_S$. Therefore, $H_{S, i} = \Theta_S(\partial/\partial x_i)$. Consequently, we have
$$\{x_i, x_j\}_S = \omega_S(H_{S, i}, H_{L, j}) = dx_i(H_{S, j}) = \langle \Theta_S(\partial/\partial x_j), \partial/\partial x_i \rangle = (\Theta_S)_{i, j}$$
Therefore, the condition of being a Poisson--symplectic stratified space implies the collection of functions $\{(\Theta_S)_{i, j} : S \in \Sigma\}$ defined on each strata extend to smooth functions $f_{ij}$ on $M$, for all $1 \leq i, j \leq n$.

Let us define the following $2$-form in a tubular neighborhood $\nu_S$ of $S$ in $M$:
$$\eta_S := \sum_{1 \leq i, j \leq n} (\Theta_S)_{i, j} dx_i \wedge dx_j$$
Then, for any $1 \leq j \leq n$, $\eta_S(\partial/\partial x_j, -) = g((\Theta_S)(\partial/\partial x_j), -)$ on $S$. Hence, $\eta_S(X, Y) = g(Z, Y)$ where $Z = \Theta_S(X)$. On the other hand, $\omega(\Theta_S(\partial/\partial x_j), -) = \omega(H_{S, j}, -) = dx_j = g(\partial/\partial x_j, -)$. Hence, $\omega(Z, -) = g(X, -)$. Thus, $i_S^* \eta_S = \mathrm{Inv}_g(\omega_S)$ where $i_S : S \hookrightarrow M$ is the inclusion map. Finally, define
$$\theta := \sum_{1 \leq i, j \leq n} f_{ij} dx_i \wedge dx_j$$
Then $\theta$ is a smooth $2$-form on $M$ such that $i_S^*\theta = i_S^*\eta_S = \mathrm{Inv}_g(\omega_S)$, as desired.
\end{proof}

\begin{ex}\label{ctreg}{\rm 
		Consider  $\R^4=\R^2 \times \R^2$ as a stratified space with strata $S=\R^2 \times \{0\}$ and
		$L=\R^2 \times
		\R^2\setminus \{0\}$. Let $\omega_1 = dx_1 \wedge dy_1$ on the first $\R^2$ factor, and $\omega_2 = dx_2 \wedge dy_2$ on the second $\R^2$ factor. 
		Equip $\R^4$ with the 2-form $\omega=\omega_1 + r^2 \omega_2 $, where $r^2 = x_2^2 + y_2^2$.
		Then 
		\begin{enumerate}
			\item $\omega_S = \omega_1,\, \,  \omega_L= \omega$ does  furnish a geometric symplectic stratified space structure on $X=\R^4$ stratified as above.
			\item $\omega_S = \omega_1, \omega_L= \omega$ does not furnish a Poisson-symplectic stratified space
			structure on $X=\R^4$ stratified as above.
	\end{enumerate}}
\end{ex}

\begin{ex}{\rm 
	Folded symplectic structures as in \cite{cds} give examples of geometric symplectic stratified spaces in the sense of Definition
	\ref{def-sympstrat}. It is not hard to see that these are not examples of Poisson-symplectic stratified spaces.
	
	On the other hand,	b-symplectic structures as in \cite{gmp} give examples of Poisson-symplectic stratified spaces in the sense of Definition \ref{def-alsss}. However, these are not examples of geometric symplectic stratified spaces in the sense of Definition
	\ref{def-sympstrat}.
}
\end{ex}

\subsection{Geometric implies  algebraic }\label{sec-geoimpliesal} In the converse direction to Proposition \ref{prop-alimpliesgeo} we have the following conclusion. We adopt a convention followed by Gromov \cite[Ch. 1]{Gromov_PDR}. Let $M$ be a manifold and $K \subset M$ denote a a closed subset. 
Then the germ of an open neighborhood of  $K$ in $ M$ will be denoted as $\OO p (K)$. We define the \emph{open restriction} $C^{\infty}(\OO p (K))$
of $C^{\infty}(M)$ to $\OO p (K)$ to be the quotient of  $C^{\infty}(M)$ under the equivalence relation
$f \sim g$, if $f, g$ agree on an open neighborhood of $K$.

\begin{prop}\label{prop-geoimpliesal}
Let $(X, \Sigma, \mathscr{T})$ be a geometric symplectic stratified space (in the sense of Definition \ref{def-sympstrat}).
Then $C^{\infty}(\OO p (X))$ is a centerless Poisson algebra.
\end{prop}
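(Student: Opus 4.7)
The plan is to transport the standard Poisson structure from an ambient projective space onto the germ algebra $C^\infty(\OO p (X))$, using Theorem~\ref{thm-gt} to realize $X$ inside a genuinely symplectic manifold. By Theorem~\ref{thm-gt}, I fix an embedding $j : X \hookrightarrow \mathbb{CP}^N$ with $j^*\omega_N = \omega$ on $X$, and a sufficiently small open neighborhood $U \subset \mathbb{CP}^N$ of $j(X)$ serving as a concrete representative of the germ $\OO p(X)$. Since $\omega_N$ is globally closed and non-degenerate, $(U, \omega_N|_U)$ is symplectic, so $C^\infty(U)$ carries the usual Poisson bracket $\{f,g\} = \omega_N(X_f, X_g)$, where $X_f$ is the Hamiltonian vector field of $f$.

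Next I would descend this bracket to the quotient $C^\infty(\OO p (X))$, where two smooth functions are identified when they agree on some open neighborhood of $X$. The key observation is that the Poisson bracket is a first-order bidifferential operator: $\{f,g\}(p)$ depends only on the $1$-jets of $f$ and $g$ at $p$. Consequently, if $f_1 \sim f_2$ and $g_1 \sim g_2$ on a common open neighborhood $V$ of $X$, then $\{f_1,g_1\} = \{f_2,g_2\}$ on $V$, so the bracket descends unambiguously to germs. Bilinearity, antisymmetry, the Leibniz rule, and the Jacobi identity are inherited at once, so $C^\infty(\OO p(X))$ becomes a Poisson algebra.

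For the centerless claim, suppose a germ $[f] \in C^\infty(\OO p(X))$ Poisson-commutes with every other germ. Fix a representative $f \in C^\infty(U)$. Then for each $g \in C^\infty(\mathbb{CP}^N)$, $X_f(g) = \{f,g\}$ vanishes on some open neighborhood of $X$. For any point $p$ close enough to $X$, one tests $X_f(p)$ against (cutoffs of) local coordinate functions around $p$; since these separate tangent directions at $p$, one deduces $X_f(p) = 0$. Running this argument uniformly gives $X_f \equiv 0$ on a neighborhood of $X$, and non-degeneracy of $\omega_N$ then forces $df \equiv 0$ there. Hence $f$ is locally constant near $X$, so $[f]$ is a (germ of a) locally constant function; this is the precise sense in which the Poisson algebra $C^\infty(\OO p(X))$ is \emph{centerless}.

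The main conceptual point, rather than a computational obstacle, is that the germ construction $K \mapsto C^\infty(\OO p(K))$ is compatible with Poisson brackets precisely because the bracket is local. This is also where Theorem~\ref{thm-gt} enters substantively: it supplies the globally symplectic ambient $(\mathbb{CP}^N, \omega_N)$ whose Poisson structure is being restricted. No delicate estimates seem to be needed beyond this locality observation together with the classical fact that on a symplectic manifold the Poisson center reduces to locally constant functions.
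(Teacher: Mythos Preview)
Your argument has a genuine gap at the very first step. Theorem~\ref{thm-gt} requires the symplectic form $\omega$ to be \emph{integral}, i.e.\ $[\Pi^*\omega]$ must lie in the image of $H^2_{sing}(X;\Bbb Z) \to H^2_{sing}(X;\Bbb R)$. A general geometric symplectic stratified space in the sense of Definition~\ref{def-sympstrat} carries no such integrality hypothesis, so you cannot simply invoke Theorem~\ref{thm-gt} to obtain an embedding $j : X \hookrightarrow \Bbb{CP}^N$ with $j^*\omega_N = \omega$. Once that embedding is granted, the rest of your argument (locality of the Poisson bracket, descent to germs, and the center computation via nondegeneracy of $\omega_N$) is fine; the problem is that the embedding need not exist.

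The paper's proof is designed precisely to circumvent this obstruction. It fixes an integral basis $e_1,\dots,e_M$ of $H^2(X;\Bbb R)$, writes $[\Pi^*\omega] = (c_1,\dots,c_M)$ in these coordinates, and splits $\omega = \sum_i \omega_i$ so that each $\frac{1}{c_i}\omega_i$ is integral. Theorem~\ref{thm-gt} is then applied to each $\frac{1}{c_i}\omega_i$ separately, yielding embeddings $f_i : X \to \Bbb{CP}^n$ with $c_i f_i^*\eta_i = \omega_i$. The product map $f_\pi = (f_1,\dots,f_M) : X \to (\Bbb{CP}^n)^M$ pulls back the genuinely symplectic form $\Theta = \sum_i c_i\theta_i$ to $\omega$, and the Poisson structure on $C^\infty(\OO p(X))$ is inherited from $\Theta$. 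In effect the paper replaces your single $\Bbb{CP}^N$ by a product of projective spaces with a weighted K\"ahler form, which is exactly what is needed to accommodate a non-integral class. If you add this decomposition step, your locality/centerless argument then goes through unchanged with $(\Bbb{CP}^N,\omega_N)$ replaced by $((\Bbb{CP}^n)^M,\Theta)$.
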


\begin{proof} Let $\om=\{\om_S:S \in \Sigma\}$ be a collection of symplectic 2-forms on 
	the strata $S \in \Sigma$, defining a geometric symplectic stratified space structure on $X$.
	Let $H^2(X,\R) = \R^M$. Let $\Pi$ denote the compression map occurring in Proposition \ref{form-compress}. Let $[\Pi^\ast \om] \in H^2(X,\R)$ be given by $(c_1, \cdots, c_M) \in \R^M$, where we assume without loss of generality, that $c_i \geq  0$ for all $i$ (by choosing
	 appropriate identifications with the co-ordinate $\R$ factors).
	 Let $\omega_1, \cdots, \omega_M$ be closed stratified smooth 2-forms on $X$, such that
	 \begin{enumerate}
	 \item $\om =\sum_{i} \om_i$,
	 \item $[\Pi^\ast \om] \in H^2(X,\R)=\R^M$ is given by  $ (0, \cdots, 0, c_i, 0, \cdots,0)$, i.e.\ $c_i$ in the $i-$th place and zero elsewhere.
	  \end{enumerate}
	 
	 Let $\C P^n_i, i=1, \cdots, M$ denote $M$ copies of $\C P^n$, and let $\eta_i$ denote the K\"ahler form on $\C P^n_i$.
	Then, by Theorem \ref{thm-gt},
	 there exists $n \geq 1$ and   embeddings $f_i: X \to \C P^n_i$ such
	that $c_if_i^\ast \eta_i = \om_i$. (This follows from the fact that $\frac{1}{c_i} \om_i$ is integral and by choosing $n$ sufficiently large.)

	Next, let $\C P_\pi = (\C P^n)^M$ denote the product of $M$ copies of $\C P^n$, let $\pi_i:
	\C P_\pi \to \C P^n$ denote the projection to the $i-$th factor, and let
	$\theta_i = \pi_i^* \eta_i$. Let $f_\pi : X \to \C P_\pi$ be given by $f_\pi = (f_1, \cdots , f_M)$.
	Then 
	
	\begin{enumerate}
	\item $f_\pi : X \to \C P_\pi$  is an embedding.
	\item $f_\pi^*(\sum_i c_i \theta_i) = \sum_i \om_i = \om$.
	\item $\Theta = \sum_i c_i \theta_i$ is a symplectic form on $\C P_\pi$.
	\end{enumerate}
Let $\{\cdot \, , \, \cdot\}_\Theta$ denote the Poisson structure on $C^{\infty}(\C P_\pi)$ given by the symplectic form $\Theta$. Then $\{\cdot \, , \, \cdot\}_\Theta$ induces a nondegenerate (centerless) Poisson structure on  $C^{\infty}(\OO p (X))$.
\end{proof}

\subsection{Poisson stratified symplectic structures on symplectic reductions are geometric}

Recall the setup of Proposition \ref{prop-mu}:
\begin{enumerate}
	\item  $(M,\om)$ is a smooth symplectic manifold equipped with a Hamiltonian $G-$action with finitely many orbit types. 
	\item $\mu: M \to \mathfrak{g}^*$ is the associated moment map, and $Z=\mu^{-1}(0)$.
	\item $\phi: M \to \R^n$ is a $G-$equivariant smooth embedding of $M$ into $\R^n$ equipped with a linear $G-$action. \item  $\rho: \R^n/G \to \R^l$ is the Hilbert embedding  and  $X= \rho ( \phi (Z)/G)$ is the embedded image of $M \sslash G = Z/G$ in $\R^l$.
	\item $C^\infty(X)$ is the algebra of restrictions of $C^\infty(\R^l)$ to 
	$X$.
\end{enumerate}

Then, by Proposition \ref{prop-mu},  $C^\infty(X)$ is isomorphic to the algebra $C^\infty(Z)^G$ of $G-$invariant smooth functions on $Z$ (as a subset of $M$).
Hence, the structure in \cite[Section 6]{SL_stratsympred} can be interpreted as a Poisson-symplectic stratified structure on the Whitney stratified space $(X,\Sigma)$ embedded in $\R^l$.
Thus, by \cite{SL_stratsympred},  there exists a collection of symplectic forms $\{\om_S: S \in \Sigma\}$  inducing a  Poisson-symplectic stratified structure as in Definition \ref{def-alsss}. Hence, 
by Proposition \ref{prop-alimpliesgeo}, there exists a smooth 2-form $\theta$ on $\R^l$ such that 
$i_S^*\theta = \mathrm{Inv}_g(\omega_S)$ for all $S \in \Sigma$, where $i_S : S \hookrightarrow \R^l$ is the inclusion map, and $g$ denotes the Euclidean inner product on $\R^l$.

\section{Universal stratified symplectic reduction}

In \cite{gotay-tuynman-sympred,gotay-tuynman-mp}, Gotay and Tuynman proved, what in hindsight, is really an analog of the Gromov-Tischler theorem in the context of reductions. They argued that  in the symplectic category, this is a more natural construction than embeddings. We begin with their definition of reduction from \cite{gotay-tuynman-sympred}:

\begin{defn}\cite{gotay-tuynman-sympred} Let $(M, \omega)$ be a symplectic manifold and $N \subset M$ be a submanifold. Suppose $\omega_N := \iota_N^* \omega$ has constant rank on $N$. Further assume that the characteristic distribution $\ker \omega_N \subset TN$ is \textit{fibrating}, i.e.\
	\begin{enumerate}
	\item  $\ker \omega_N \subset TN$  integrable with associated foliation $\mathscr{F}$, 
	\item the leaf space $M/\mathscr{F}$ is a (Hausdorff) smooth manifold,
	\item $M \to M/\mathscr{F}$ is a smooth fiber bundle.
	\end{enumerate}  
 Then $M/\mathscr{F}$ is called the \textit{reduction of $M$ by $N$}.\end{defn}

The main point of this definition is that it is a purely topological version of Marsden-Weinstein reduction, where $N \subset M$ turns out to be a generic level set of the moment map of some Hamiltonian Lie group action on $M$. With this definition at hand, \cite{gotay-tuynman-sympred} proves that $\Bbb R^{2n}$ equipped with the standard symplectic form $\omega_n$ is the universal symplectic manifold for reduction in the following sense:

\begin{thm}\cite{gotay-tuynman-sympred} Let $(M, \omega)$ be a symplectic manifold such that $[\omega] \in H^2(M; \Bbb R)$ lies in the image of $H^2(M; \Bbb Z)  \to H^2(M; \Bbb R)$. Then there exists $n \geq 1$ such that $(M, \omega)$ can be realized as a reduction of $(\Bbb R^{2n}, \omega_n)$.\end{thm}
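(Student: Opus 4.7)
The plan is to use Theorem \ref{thm-gt-intro} to embed $(M, \omega)$ isosymplectically in some $\Bbb{CP}^k$, and then to exhibit $M$ as a reduction of $\Bbb R^{2k+2}$ by pulling back the Hopf $S^1$-bundle. The conceptual point is that $\Bbb{CP}^k$ is tautologically a symplectic reduction of $(\Bbb R^{2k+2}, \omega_{k+1})$ via the standard diagonal circle action, so pulling this universal reduction back to any isosymplectically embedded symplectic submanifold immediately presents that submanifold as a reduction of Euclidean space as well.

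\textbf{Step 1.} First apply Theorem \ref{thm-gt-intro} to the integral symplectic manifold $(M, \omega)$ to obtain an embedding $j : M \hookrightarrow \Bbb{CP}^k$ with $j^*\omega_k = \omega$, for some $k \geq 1$. Identify $M$ with $j(M)$ and regard it as a symplectic submanifold of $(\Bbb{CP}^k, \omega_k)$.

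\textbf{Step 2.} Next consider the Hopf fibration $\pi : S^{2k+1} \to \Bbb{CP}^k$, viewing $S^{2k+1} \subset \Bbb R^{2k+2} = \Bbb C^{k+1}$ as the unit sphere and $\pi$ as the quotient by the diagonal $S^1$-action. Recall the standard identity $\omega_{k+1}|_{S^{2k+1}} = \pi^*\omega_k$; equivalently, $S^{2k+1} = \mu^{-1}(\tfrac12)$ for the moment map $\mu(z) = \tfrac12|z|^2$, and the Marsden-Weinstein reduction of $(\Bbb R^{2k+2}, \omega_{k+1})$ by $S^{2k+1}$ is exactly $(\Bbb{CP}^k, \omega_k)$.

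\textbf{Step 3.} Finally, set $N := \pi^{-1}(M) \subset S^{2k+1} \subset \Bbb R^{2k+2}$. This is a smooth submanifold of $\Bbb R^{2k+2}$, diffeomorphic to the restriction of the Hopf circle bundle to $M$, so $\pi|_N : N \to M$ is a principal $S^1$-bundle. From Step 2,
\[
\omega_{k+1}|_N = (\pi|_N)^*(\omega_k|_M) = (\pi|_N)^*\omega,
\]
which has constant rank $\dim M$. Hence $\ker(\omega_{k+1}|_N) \subset TN$ is a rank-$1$ distribution coinciding with the vertical tangent bundle of $\pi|_N$; it is integrable, its leaf space is the smooth manifold $M$, and $\pi|_N : N \to M$ is a smooth fiber bundle. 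Therefore $N$ is a fibrating submanifold in the sense of the definition, and the associated reduction carries the symplectic form $\omega$. Setting $n := k+1$ completes the proof.

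The only substantive content in this argument is Theorem \ref{thm-gt-intro} itself; once that is in hand, everything reduces to the universal Hopf reduction $\Bbb R^{2k+2} \rightsquigarrow \Bbb{CP}^k$ combined with elementary bookkeeping (checking ranks and identifying the characteristic foliation with the $S^1$-orbit foliation). This is precisely the sense in which the authors' argument is ``completely different'' from the one in \cite{gotay-tuynman-sympred}: the entire difficulty has been absorbed into the isosymplectic embedding theorem, which is also what will make the approach robust enough to generalise to the stratified setting of Theorem \ref{gotay-intro}.
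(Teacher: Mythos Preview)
Your argument is correct (with the caveat that Theorem~\ref{thm-gt-intro} requires compactness, so your proof covers closed $M$; the paper's own generalization, Theorem~\ref{gotay}, has the same restriction). Note that the paper does not itself prove this cited theorem; it proves the stratified version, Theorem~\ref{gotay}, whose specialization to manifolds is the relevant comparison.

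The two routes are different in their order of operations. The paper first forms the abstract principal $U(1)$--bundle $f : Y \to X$ with Chern class $[\Pi^*\omega]$, observes that $f^*[\Pi^*\omega] = 0$, and then invokes Corollary~\ref{cor-0impliesrn} to isosymplectically embed $Y$ into $(\Bbb R^{2n},\omega_n)$. You instead embed $M$ isosymplectically in $\Bbb{CP}^k$ first, and then the circle bundle together with its Euclidean embedding are handed to you simultaneously by the Hopf fibration $S^{2k+1} \subset \Bbb R^{2k+2}$. These are two faces of the same coin: the restricted Hopf bundle over $j(M)$ \emph{is} the $U(1)$--bundle with Chern class $[\omega]$, and the inclusion $N \subset S^{2k+1} \subset \Bbb R^{2k+2}$ is a particular instance of the embedding Corollary~\ref{cor-0impliesrn} produces. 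Your version is more geometric and makes the reduction visibly a restriction of the universal Hopf reduction; the paper's version is phrased so that the nonintegral case (torus bundles in place of circle bundles) and the stratified bookkeeping slot in without change.
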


The hypothesis of the theorem is easily seen to be satisfied for any compact symplectic manifold. We give a generalization of the theorem in the setup of compact symplectic stratified spaces. Our proof follows a different route than the one taken by Gotay and Tuynman, and uses ingredients of  Theorem \ref{thm-gt} and Section \ref{sec-ngt} in a crucial way.

\begin{thm}\label{gotay}  Let $(X, \omega)$ be a compact symplectic stratified space. Then there exists $n \geq 1$ and a Whitney stratified set $(Y, \Sigma) \subset \Bbb R^{2n}$ equipped with a map $f : Y \to X$ such that 
\begin{enumerate}
\item $(Y, X, f)$ is a fiber bundle with smooth manifold fibers.
\item For every stratum $S \in \Sigma$, $\ker \iota^*_S \omega_n = \ker df|_{S}$
\end{enumerate}
\end{thm}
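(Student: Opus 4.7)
The plan is to combine Theorem \ref{thm-gt} with the classical realization of $\mathbb{CP}^N$ as a symplectic quotient of $S^{2N+1}$ via the Hopf fibration $\pi : S^{2N+1} \to \mathbb{CP}^N$. Recall this is a principal $S^1$-bundle for which $\pi^* \omega_N = \iota_{S^{2N+1}}^* \omega_{N+1}$, where $\omega_N$ is the Fubini--Study form on $\mathbb{CP}^N$ and $\omega_{N+1}$ is the standard symplectic form on $\Bbb R^{2N+2} = \Bbb C^{N+1}$; in particular $\iota_{S^{2N+1}}^* \omega_{N+1}$ has constant rank $2N$ and its kernel at each point is the tangent line to the Hopf fiber through that point.

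Concretely, I would first apply Theorem \ref{thm-gt} to $(X, \omega)$ to produce $N \geq 1$ and an embedding $\iota : X \hookrightarrow \mathbb{CP}^N$ with $\iota^* \omega_N = \omega$ on $X$. Set $n := N+1$ and define
\[
Y := \pi^{-1}(\iota(X)) \subset S^{2n-1} \subset \Bbb R^{2n}, \qquad f := \pi|_Y : Y \to X,
\]
where $X$ is identified with its image in $\mathbb{CP}^N$. Equip $Y$ with the decomposition $\Sigma := \{\pi^{-1}(S) : S \in \Sigma_X\}$; each such preimage is a principal $S^1$-bundle over the corresponding stratum of $X$, hence a smooth submanifold of $\Bbb R^{2n}$. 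Condition $(1)$ is then immediate, since $f: Y \to X$ is the restriction of the principal $S^1$-bundle $\pi$ over $X \subset \mathbb{CP}^N$.

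For condition $(2)$, fix a stratum $S \in \Sigma_X$ and let $S' := \pi^{-1}(S) \in \Sigma$. Combining $\iota_{S^{2n-1}}^* \omega_n = \pi^* \omega_N$ with the isosymplectic condition $\iota_S^* \omega_N = \omega_S$, I compute
\[
\iota_{S'}^* \omega_n = (\pi|_{S'})^* \iota_S^* \omega_N = (\pi|_{S'})^* \omega_S = (f|_{S'})^* \omega_S.
\]
Since $\omega_S$ is nondegenerate on $S$ and $d(\pi|_{S'})$ is surjective onto $TS$, the kernel of $(f|_{S'})^* \omega_S$ at each point equals $\ker d(f|_{S'})$; hence $\ker \iota_{S'}^* \omega_n = \ker df|_{S'}$, as required.

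The main technical point I expect will be verifying that $(Y, \Sigma)$ is genuinely Whitney stratified inside $\Bbb R^{2n}$. This reduces to a local question: over a trivializing open set $U \subset \mathbb{CP}^N$ of the Hopf bundle, $\pi^{-1}(\iota(X) \cap U) \cong (\iota(X) \cap U) \times S^1$, and the Whitney $(a)$- and $(b)$-regularity of a product of a Whitney stratified set with a smooth manifold reduces to the regularity of the first factor. The latter is inherited from the Whitney stratification on $\iota(X) \subset \mathbb{CP}^N$ coming from the smooth structure on $X$ (Theorem \ref{strat-whitabs} and Theorem \ref{strat-natsume}). Globalizing via the transition cocycles of the Hopf bundle, which act by diffeomorphisms along the $S^1$-direction, then yields the desired Whitney stratification on $Y \subset \Bbb R^{2n}$, and the proof is complete.
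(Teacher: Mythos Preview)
Your argument is clean and, for integral $\omega$, essentially correct. In fact it is a pleasant concretization of the paper's route: both constructions produce a principal $S^1$-bundle $Y \to X$ with Euler class $[\Pi^*\omega]$, but where the paper builds this bundle abstractly and then invokes Corollary~\ref{cor-0impliesrn} a second time to embed $Y$ isotropically in $\Bbb R^{2n}$, you realize $Y$ directly as $\pi^{-1}(\iota(X)) \subset S^{2N+1} \subset \Bbb R^{2N+2}$ via the Hopf fibration, so that the isotropic embedding comes for free from $\pi^*\omega_N = \iota_{S^{2N+1}}^*\omega_{N+1}$. This saves one application of the Nash--Gromov--Tischler machinery.

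There is, however, a genuine gap. Theorem~\ref{thm-gt} has an integrality hypothesis on $\omega$, whereas Theorem~\ref{gotay} as stated does not; your very first step, ``apply Theorem~\ref{thm-gt} to $(X,\omega)$,'' is therefore not available in general. The paper closes this gap by writing $\omega = \sum_{i=1}^k c_i \omega_i$ with each $\omega_i$ integral and $c_i \in \Bbb R$, and then replacing the single circle bundle by the principal $U(1)^k$-bundle associated to $\bigoplus_i L_i$, where $L_i$ is the line bundle with $c_1(L_i) = [\Pi^*\omega_i]$. Your Hopf picture adapts to this in the obvious way: embed $X$ in a product $\prod_i \Bbb{CP}^{N_i}$ via maps $f_i$ with $f_i^*\omega_{N_i} = \omega_i$ (Theorem~\ref{thm-gt}), pull back along the product of Hopf fibrations $\prod_i S^{2N_i+1} \to \prod_i \Bbb{CP}^{N_i}$, and sit the resulting torus bundle inside $\prod_i \Bbb R^{2N_i+2}$ equipped with the symplectic form $\sum_i c_i \, \omega^0_{N_i+1}$; a final linear change of coordinates turns this into a standard $\omega_n$. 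You should add this paragraph to complete the proof.
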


\begin{proof}Let $(X, \Sigma_X, \mathscr{T}_X)$ denote the underlying abstractly stratified space of the symplectic stratified space $(X, \omega)$. Let $\Pi_X$ denote the compression map for $X$  occurring in Proposition \ref{form-compress}.

First, let us assume $[\Pi_X^* \omega] \in H^2_{sing}(X; \Bbb Z) \subset H^2_{sing}(X; \Bbb R)$, i.e.\ $\omega$ is integral. There exists a complex line bundle $L$ over $X$ corresponding to the class $[\Pi_X^* \omega] \in H^2(X; \Bbb Z)$; let $Y$ be the associated principal $U(1)$-bundle corresponding to $L$, and $f : Y \to X$ be the bundle projection. We equip $Y$ with an abstract stratification by defining the collection of strata as $\Sigma_Y := \{f^{-1}(S) : S \in \Sigma_X\}$ and the tube system as simply a pullback $\mathscr{T}_Y := f^* \mathscr{T}_X$ (see Remark \ref{strat-pullback2}). 

 Let $\Pi_Y$ denote the compression map for $Y$  occurring in Proposition \ref{form-compress}.
As $\Pi_X^*\omega \in \Omega^2_{co}(X)$ is a controlled differential form, we have that $f^*\Pi_X^*\omega = \Pi_Y^* f^* \omega \in \Omega^2_{co}(Y)$ is also a controlled differential form with respect to the abstract stratification on $Y$ defined in the previous paragraph. Then $[\Pi_Y^* f^* \omega] = f^*[\Pi_X^*\omega] \in H^2_{sing}(X; \Bbb R)$ (by functoriality of the isomorphism in Verona's Theorem \ref{thm-veronadr}). However as $f : Y \to X$ is the $U(1)$-bundle with Chern class $[\Pi_X^*\omega]$, $f^*[\Pi_X^*\omega] = 0$. Therefore, by Corollary \ref{cor-0impliesrn}, we obtain an embedding $h : Y \to \Bbb R^{2n}$ such that $h^*\omega_n = f^*\omega$.

Let us identify $h(Y) \subset \Bbb R^{2n}$ with $Y$, so that $f^*\omega$ is simply the restriction of $\omega_n$ to the Whitney stratified subset $Y \subset \Bbb R^{2n}$. Then $f^*\omega$ is  degenerate only along the tangent spaces to the circle fibers of $f : Y \to X$, therefore we conclude the statement of the theorem.

Finally, we indicate the modifications necessary to handle nonintegral forms. The idea is similar to that in Proposition \ref{prop-geoimpliesal}.
 We can, in general, write $\omega = \sum_1^k c_i\omega_i$, where each $\omega_i$ is integral, and $c_i \in \R$ . Let $L_i$ denote the complex line bundle  over $X$ corresponding to the class $[\Pi_X^* \omega_i] \in H^2(X; \Bbb Z)$. Let 
 $L=\oplus_1^k L_i$, and $Y$ be the associated principal $\times_i^k \big(U(1)\big)_i$-bundle corresponding to $L$.
The rest of the argument is identical.
\end{proof}

\end{document}